\documentclass[a4paper,11pt,leqno,english]{smfart}
\usepackage{aeguill}
\usepackage{enumerate}
\usepackage{amssymb,amsmath,latexsym,amsthm, smfthm}
\usepackage[T1]{fontenc}
\usepackage{geometry}   
\usepackage{url}   
\usepackage[frenchb, english]{babel}
\usepackage[utf8]{inputenc}   
\usepackage{mathrsfs}
\usepackage{xcolor}
\usepackage{comment}
\definecolor{violet}{rgb}{0.0,0.2,0.7}
\definecolor{rouge2}{rgb}{0.8,0.0,0.2}
\usepackage{hyperref}
\usepackage{smfhyperref}
\hypersetup{
    bookmarks=true,         
    unicode=false,          
    pdftoolbar=true,        
    pdfmenubar=true,        
    pdffitwindow=false,     
    pdfstartview={FitH},    
    pdftitle={},    
    pdfauthor={},     
    colorlinks=true,       
   linkcolor=violet,          
    citecolor=violet,        
    filecolor=black,      
    urlcolor=cyan}           
\setcounter{tocdepth}{2}
\usepackage{geometry}

\newcommand{\R}{\mathbb{R}}

\newcommand{\DD}{\mathbb D}

\renewcommand{\d}{\partial}

\newcommand{\vp}{\varphi}

\newcommand{\Ox}{\mathcal{O}_{X}}
\newcommand{\ep}{\varepsilon}
\renewcommand{\epsilon}{\varepsilon}

\newcommand{\la}{\langle}

\newcommand{\ra}{\rangle}

\renewcommand{\ge}{\geqslant}
\renewcommand{\le}{\leqslant}
\renewcommand{\leq}{\leqslant}

\newcommand{\Ric}{\mathrm{Ric} \,}
\newcommand{\om}{\omega}

\newcommand{\ddc}{dd^c}

\newcommand{\vpe}{\varphi_{\varepsilon}}

\newcommand{\ome}{\om_{\varepsilon}}
\newcommand{\omvp}{\omega_{\varphi}}
\newcommand{\omt}{\om_{t}}

\newcommand{\omvpe}{\om_{\vp_{\ep}}}
\newcommand{\re}{\rho_{\ep}}
\newcommand{\Supp}{\mathrm {Supp}}
\newcommand{\tr}{\mathrm{tr}}

\newcommand{\vol}{\mathrm{vol}}

\newcommand{\Tht}{\Theta_{h^{\om}_{X_t}}(K_{X_t})}

\renewcommand{\tt}{\theta_{i,t}}

\newcommand{\ssm}{\smallsetminus}

\newcommand{\ombte}{\om_{\beta,t, \ep}}
\newcommand{\omte}{\om_{t, \ep}}
\newcommand{\ombe}{\om_{\beta, \ep}}
\newcommand{\vep}{v_{\ep}}
\newcommand{\bvep}{{\bar{v}}_{\ep}}

\newcommand{\vph}{v_{\vp}}
\newcommand{\Le}{L_{\ep}}

\newtheorem*{thms}{Theorem}

\geometry{hmargin=2.5cm, vmargin=2.5cm}

\numberwithin{equation}{section}
\setcounter{tocdepth}{1}

\begin{document}
\title{Families of conic Kähler-Einstein metrics}

\author{Henri Guenancia}
\address{Department of Mathematics \\
Stony Brook University, Stony Brook, NY 11794-3651}
\email{guenancia@math.sunysb.edu}
\urladdr{www.math.sunysb.edu/{~}guenancia}

 \thanks{The author is partially supported by NSF Grant DMS-1510214}

\date{\today}

\begin{abstract}
Let $p:X\to Y$ be an holomorphic surjective map between compact Kähler manifolds and let $D$ be an effective divisor on $X$ with generically simple normal crossings support and coefficients in $(0,1)$. Provided that the adjoint canonical bundle $K_{X_y}+D_y$ of the generic fiber is ample, we show that the current obtained by glueing the fiberwise conic Kähler-Einstein metrics on the regular locus of the fibration is positive. Moreover, we prove that this current is bounded outside the divisor and that it extends to a positive current on $X$. 
\end{abstract}
\maketitle
\tableofcontents

\section*{Introduction}

Let $p:X \to Y$ be a holomorphic surjective map between two compact Kähler manifolds $X$ and $Y$, and let $D=\sum_{k=1}^r D_k$ be a reduced divisor on $X$ with generically simple normal crossings and mapping surjectively to $Y$ by $p$. We denote by $W \subset Y$ the minimal analytic subset of $Y$ such that if $X_0:=p^{-1}(Y\ssm W)$, then every fiber $X_y$ of $p_{|X^0}$ is smooth, $D_{|X_y}$ has simple normal crossings (and therefore is transverse to $X_y$).  Finally, let $\{\gamma\}\in H^{1,1}(X,\R)$ be a real cohomology class containing a smooth semipositive form $\gamma$. 

Now we assume that for a generic $y\in Y$ and a set of numbers $\beta_1, \ldots, \beta_r \in (0,1)$, the cohomology class 
$$c_1(K_{X_y}+\sum_{k=1}^r (1-\beta_k){D_k}_{|{X_y}})+ \{\gamma\}_{|X_y}$$
is Kähler. By \cite{GP}, there exists on each such fiber $X_y$ a unique (twisted) conic Kähler-Einstein metric $\om_y$ with cone angles $2\pi \beta_k$ along ${D_k}_{|X_y}$ satisfying:
\begin{equation}
\label{KE}
\Ric \om_y = -\om_y+\gamma_{|X_y}+ \sum_{k=1}^r(1-\beta_k) [{D_k}_{|X_y}]
\end{equation}
Over $X_0$, it is possible to glue the fiberwise conic Kähler-Einstein metrics $\om_y$ to get a current $\rho\in c_1(K_{X/Y}+ \sum_{k=1}^r(1-\beta_r) D_k)+\{\gamma\}$ with locally bounded potentials. A priori, $\rho$ need not have regularity or positivity in the directions transverse to the fibers, and it need not extend to $X$. Our main theorem addresses these questions:

\vspace{4mm}
\begin{thms}
Let $p:X\to Y$ a holomorphic surjective map between compact Kähler manifolds, $D=\sum_{k=1}^r D_k$ a reduced divisor with generically simple normal crossings, $\{\gamma\}\in H^{1,1}(X,\R)$ a semipositive class and $\beta_1, \ldots, \beta_r \in (0,1)$ such that the cohomology class $c_1(K_{X_y}+\sum_{k=1}^r (1-\beta_k){D_k}_{|{X_y}})+ \{\gamma\}_{|X_y}$ is Kähler for every  $y\in Y\ssm W$. Then the fiberwise twisted conic Kähler-Einstein metrics $\om_y$ satisfying \eqref{KE} can be glued to define a current $\rho$ on $X_0$ such that:
\begin{enumerate}
\item[$\bullet$] $\rho$ is positive
\item[$\bullet$] $\rho$ is bounded outside $D$
\item[$\bullet$] $\rho$ extends to $X$ as a closed positive current in $ c_1(K_{X/Y}+ \sum_{k=1}^r(1-\beta_r) D_k)+\{\gamma\}$.
\end{enumerate}
In particular, the cohomology class $ c_1(K_{X/Y}+ \sum_{k=1}^r(1-\beta_k) D_k)+\{\gamma\}$  is pseudoeffective. 
\end{thms}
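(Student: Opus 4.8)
The plan is to realize $\rho$ as a limit of honest Kähler metrics on the total space, for which positivity is automatic, and then control the limit. The starting point is the observation that the fiberwise equation \eqref{KE} can be packaged as a relative Monge-Ampère equation on $X_0$: writing $\rho = \gamma + \ddc \vp$ for a relative potential $\vp$ (locally bounded on $X_0$), the conic Kähler-Einstein condition says that the restriction of $\rho$ to each fiber $X_y$ satisfies $(\rho_{|X_y})^n = e^{\vp} \, \mu_y$, where $\mu_y$ is the natural singular volume form on $X_y$ built from the adjoint class $K_{X_y} + \sum (1-\beta_k) {D_k}_{|X_y}$ and the twisting form $\gamma$. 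The key analytic input I would use is the variational/smoothing machinery behind \cite{GP}: approximate the conic data by smooth data, i.e.\ replace $D$ by a smooth closed $(1,1)$-form representative of its class obtained via a regularized potential $\chie$ of the type $\log(|s_k|^2 + \ep^2)$ for each component, and solve the corresponding family of \emph{smooth} fiberwise Kähler-Einstein equations. This produces, for each $\ep > 0$, a genuine relative potential $\vpe$ on (most of) $X$ with $\ome := \gamma + \ep \om_X + \ddc \vpe > 0$ in the fiber directions, and one expects $\vpe \to \vp$ as $\ep \to 0$.

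The heart of the matter is the \emph{positivity in the transverse directions}, and here the natural tool is the plurisubharmonic variation of fiberwise Kähler-Einstein potentials — a Schumacher/Berndtsson-type result. Concretely, I would establish a differential inequality on $X_0$ of the form
\begin{equation}
\label{plan-ineq}
\left( \Delta_{\ome} + 1 \right) \left( \vpte + c_{\om}(\ep) \right) \ge \tre(p^* \omega_Y) + (\text{nonneg. geodesic-curvature term}) - C
\end{equation}
where $\Delta_{\ome}$ is the fiberwise Laplacian and the geodesic curvature term encodes the horizontal lift; applying the maximum principle fiber by fiber then forces the full form $\ome + A\, p^*\omega_Y$ (for a suitable constant $A$, absorbing the error $C$) to be semipositive on $X$, not merely along fibers. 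This is exactly the kind of computation carried out by Schumacher and by Păun in the projective KE case, and the conic/twisted version should go through once one has uniform (in $\ep$) bounds on $\vpe$ and on $\tre(\gamma)$ on compact subsets of $X_0$; those uniform bounds are the technically delicate part and presumably rely on the a priori estimates from \cite{GP} together with a careful choice of background metric adapted to the divisor.

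Once transverse positivity of $\rho$ on $X_0$ is in hand, the remaining two bullets are more classical. \textbf{Boundedness outside $D$:} away from $\sd$ the fiberwise equation has smooth right-hand side, so the relative potential is locally bounded there by the maximum principle applied to the Monge-Ampère equation, uniformly on compact subsets of $X_0 \ssm D$; combined with the cohomological normalization this gives a global bound for $\rho$ on $X \ssm (D \cup p^{-1}(W))$, and one checks the estimate persists across $p^{-1}(W)$ by a Hartogs-type / removable-singularity argument since the potential is already known to be quasi-psh and locally bounded near the generic fiber. \textbf{Extension to $X$:} $\rho$ is a closed positive $(1,1)$-current on $X_0 = X \ssm p^{-1}(W)$, and $p^{-1}(W)$ has complex codimension $\ge 1$; if $\rho$ has locally bounded potentials near $p^{-1}(W)$ (which follows from the previous point away from $D$, and near $D \cap p^{-1}(W)$ from the local integrability of $\log|s_k|$ against the fiberwise measures), then the Skoda-El Mir extension theorem produces a closed positive current on all of $X$ in the prescribed class, whence pseudoeffectivity of $ c_1(K_{X/Y}+ \sum_{k=1}^r(1-\beta_k) D_k)+\{\gamma\}$.

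\textbf{Main obstacle.} The crux is \eqref{plan-ineq}: proving transverse semipositivity uniformly in the regularization parameter $\ep$, in the presence of the conic singularities along $D$ and of the (possibly degenerate) locus where fibers or the divisor acquire worse singularities near $W$. The conic setting means the fiberwise metrics $\om_y$ are only genuine Kähler metrics on $X_y \ssm D_y$, so all Laplacian estimates and maximum-principle arguments must be run on the open part and then extended — this requires either working on the $\ep$-regularized (smooth) models with estimates independent of $\ep$, or a direct argument using barrier functions of the form $\sum (1-\beta_k)\log|s_k|^2$ near $D$. I expect this to be where the bulk of the technical work lies, with the extension and boundedness statements following comparatively formally once the a priori estimates are secured.
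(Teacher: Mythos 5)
Your outline reproduces the right general skeleton (regularize the cone, run a Schumacher/P\u{a}un geodesic-curvature argument, then extend across the singular fibers), but it misses the point where the conic case genuinely departs from the smooth case, and that is precisely where the proof lives. In your differential inequality you treat the error coming from the divisor as a uniform constant $-C$ to be absorbed into $A\,p^*\omega_Y$ and then invoke the maximum principle. In the regularized models the curvature term contributed by $D$ is $\sum_k(1-\beta_k)\bigl(\theta_{D_k}+\ddc\log(|s_k|^2+\ep^2)\bigr)(v,v)$, and this smooth approximation of $[D_k]$ is \emph{not} semipositive: its negative part is of size $\frac{\ep^2}{|s_k|^2+\ep^2}\,\theta_{D_k}(v,v)$, evaluated on the horizontal lift $v$, which is not under control near $D$ (the lift is singular in the directions normal to the divisor, and no a priori bound on $|v|^2_{\om}$ is available there). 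So the error is neither a constant nor uniformly small, and even if it were a constant the maximum principle would only give $\rho\ge -C\,p^*\omega_Y$ in the limit, not the positivity claimed in the theorem. To conclude one must show that the error term actually tends to $0$ as $\ep\to 0$; this forces one to replace the naive maximum principle by an upper bound on the heat kernel of $-\Delta_{\om_{t,\ep}}+\mathrm{Id}$, and to prove uniform (in $t,\ep$) estimates on the transverse derivatives of the potential ($L^\infty$ and gradient bounds, an $L^2$ bound on $\nabla\partial_t\vp_{t,\ep}$ obtained through lifts taken with respect to auxiliary approximate conic metrics with modified angles $\beta'$, uniform Sobolev/Poincar\'e constants, etc.). None of this is addressed by "uniform bounds on $\vp_\ep$ and $\tr_{\om_\ep}(\gamma)$ presumably from \cite{GP}": the fiberwise estimates of \cite{GP} control nothing in the base direction, and the smooth dependence in $t$ of the genuine conic KE metrics (which would let one work with $\rho$ directly) is not known — that is exactly why the whole argument must be run on the $\ep$-models with $\ep$-independent constants.

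Two further steps are asserted rather than proved, and as stated they are incorrect. First, boundedness of $\rho$ outside $D$ is not a statement about the potential: it is a bound on the transverse coefficient $\partial^2_{t\bar t}\vp$, equivalently an upper bound on the geodesic curvature $c(\rho_\ep)$ uniform in $\ep$; a locally bounded potential gives no such thing. In the paper this requires an a priori $L^1$ bound on $c(\rho_\ep)$ (coming from integrating $\bar v_\ep\cdot v_\ep\cdot\vp_{t,\ep}$ over the fibers, i.e. genuine second-order transverse estimates) combined with Harnack inequalities and, crucially, the lower bound on $c(\rho_\ep)$ already obtained in the positivity step. Second, for the extension across $p^{-1}(W)$ you invoke a Hartogs-type removable singularity argument, but a psh local potential on $X_0$ need not extend across the singular fibers (think of $-\log|t|$ pulled back from the base): what is needed is a uniform \emph{upper} bound on the fiberwise potentials as $y$ approaches $W$, and this does not follow from anything established on $X_0$. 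The actual argument (following P\u{a}un) produces this bound via the $L^{2/m}$ Ohsawa--Takegoshi extension theorem applied to fiberwise Bergman-kernel approximations of the potential; once the upper bound is in hand, the trivial extension/Skoda--El Mir step is indeed routine, as you say.
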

\vspace{4mm}


The first two items can be summarized by saying that if $\om$ is a Kähler form on $X$, then for any relatively compact subset $\Omega \Subset X_0\ssm D$, there exists a constant $C= C(\Omega)$ such that
$$0\le \rho \le C \om$$
holds on $\Omega$. In particular, the local coefficients of $\rho$ are locally bounded functions outside $D$. Also, the local potentials of $\rho$ on that set are $\mathscr C^{1,\alpha}$-regular for any $\alpha<1$. We refer to Corollary \ref{cor:reg} for a slightly refined statement. \\
 
This Theorem is the "conic" analogue of the main theorem of \cite{Paun12} where $D=0$. We will follow the same strategy but the analysis becomes significantly more subtle as we need to deal with conic metrics, whose regularity properties are far too weak to simply follow the lines of \cite{Paun12}, cf next paragraph. From an algebraic point of view, this is reflected by the difference between dealing with semiample line bundles and merely effective ones. The last item (extension of $\rho$ to $X$) is however very similar.  \\

If $\{\gamma\}$ is the cohomology class of a line bundle (up to a scalar), then the pseudoeffectivity of $ c_1(K_{X/Y}+ \sum_{k=1}^r(1-\beta_r) D_k)+ \{\gamma\}$ is a consequence of the far more general theorem \cite{BP} about the psh variation of Bergman kernels. However, it is very interesting to understand the variation of Kähler-Einstein metrics (rather than Bergman kernels), as these canonical objects should detect the variation in moduli of the family.  

\vspace{5mm}

%
%

\begin{center}
\textbf{Outline of the proof.}
\end{center}

\vspace{5mm}

Let us first recall the strategy of the proof in the case where $D=0$ (\cite[Theorem 1.1]{Paun12}). First, one proves that the current $\rho$ is positive on $X_0$, and then one proves that it extends to a positive current on $X$ using a refined version of Ohsawa Takegoshi theorem \cite{BP2}. 

To show that $\rho$ is positive on $X_0$, it is crucial to first apply the implicit function theorem to get that the fiberwise (twisted) Kähler-Einstein varies smoothly on $X_0$. After observing \cite[Remark 3.1]{Paun12} that it is enough to consider the case where $Y=\mathbb D$, everything comes down to showing that the smooth function $c(\rho)$ is non-negative (the argument will actually show that $c(\rho)>0$). To do so, one works over a fixed fiber $X_t$ and derives an elliptic equation satisfied by $c(\rho)$: 
$$(-\Delta_{\rho}+\mathrm{Id})c(\rho)= |\bar \d v_{\rho}|^2+\gamma(v_{\rho}, v_{\rho})$$
As the right hand side is positive and the operator $-\Delta_{\rho}+\mathrm{Id}$ is positive, one get the expected result, cf \cite[\S 3]{Schum}. One can notice that an application of the maximum principle would work just as well for that purpose. \\

The general case with a boundary divisor $D$ presents some major new difficulties. The general strategy will be the same: first show that $\rho$ is a positive current on $X_0$, and then extend it to $X$; however, it will require an additional effort to show that $\rho$ is smooth on $X_0\ssm D$. This last step surprizingly relies on the positivity of $c(\rho)$. The second part is essentially similar to the one for $D=0$, cf \S \ref{extension}. The first part, however, becomes significantly more involved. The main reason is that the lack of (known) regularity for conic Kähler-Einstein metrics prevents us from applying the implicit function theorem directly to the fiberwise conic KE metrics to get their smooth variation (in the conic sense). Without this, one cannot make sense of $c(\rho)$, and the whole argument collapses. 

To circumvent this difficulty, we will work instead with approximate conic Kähler-Einstein metrics $\ome$ on each fiber; these are smooth metrics converging fiberwise to the conic Kähler-Einstein metrics. Glueing them yields a smooth $(1,1)$-form $\rho_{\ep}$ on $X_0$, solution on each fiber of an elliptic equation of the form:
  $$(-\Delta_{\re}+\mathrm{Id})c(\re)= |\bar \d v_{\re}|^2+\gamma(v_{\re}, v_{\re})+\Theta_{\ep}(v_{\re}, v_{\re})$$
where $\Theta_{\ep}$ is a smooth approximation of the current $\sum(1-\beta_k) [D_k]$. The first two terms in the right hand side are non-negative but unfortunately $\Theta_{\ep}$ is \textit{not} semipositive in general (cf e.g. \cite[Remark 1.1]{C2C}), so that we do not get the semipositivity of $c(\re)$. Rather, we have a lower bound of $\inf_{X_t} c(\re)$ that essentially looks like $-\int_{X_t}\frac{\ep^2}{|s|^2+\ep^2}\cdotp |v_{\re}|^2_{\om}\rho_{\ep}^n$, where we assumed to simplify that $D=(s=0)$ was smooth; here $\om$ is a smooth metric on $X_t$. Actually the integral is more complicated and involves an upper bound of the heat kernel, cf \eqref{infc}. 
 Most of the effort is then concentrated on showing that this last integral tends to $0$ when $\ep \to 0$. 

Looking at the expression of $v_{\re}$ recalled in the previous paragraph, one sees that the estimate one needs on the (local) potential $\phi_{\ep}$ is a uniform $L^2$ estimate for $\nabla \d_t \phi_{\ep}$ (actually one will need slightly more, cf Proposition \ref{support}). Because of the global nature of the problem, it seems to us that the methods to get local estimates (on the fibers) are bound to fail, and this is the source of a lot of complications: as $\frac{\d}{\d t} $ is only defined locally, one will have to work with one of its lifts $\vep$ with respect to some approximate conic metric (so $\vep$ is different from $ v_{\re}$ which we don't know well enough to carry out any precise computations); $\vep$ will necessarily carry some singularities along the normal directions to the divisors, and this involves a great deal of complication in our analysis. The general scheme of the proof is the following: \\
\begin{enumerate}
\item[$Step \, 1.$] Get various estimates on $\phi_{\ep}$ and its derivatives in the fiber directions (these estimates need to be uniform for nearby fibers): $L^{\infty}$, gradient estimates (this is done by generalizing \cite{Bl}), Laplacian (this is mostly \cite{GP}) as well as Sobolev and Poincaré constants estimates. Also, one recalls an upper bound on the heat kernel that is crucial to control the negativity of $c(\re)$.\\
\item[$Step \, 2.$] Get a $L^2$ estimate on $\vep \cdotp \phi_{\ep}$: this is done by differentiating the Monge-Ampère equation satisfied by $\phi_{\ep}$. Because $\vep$ is not holomorphic in the fiber directions, the linear equation one gets involves a lot of auxiliary terms. Deduce an $L^{\infty}$ estimate on $\vep \cdotp \phi_{\ep}$ and a priori estimates at any given order on $\d_t \phi_{\ep}$ outside $D$.\\
\item[$Step \, 3.$] From the previous step, one gets an $L^2$ estimate for $\nabla(\vep \cdotp \phi_{\ep})$. One can use it to eventually get an $L^2$ estimate for $\nabla \d_t \phi_{\ep}$, and from there a $L^2$ estimate for $v_{\re}$. This enables to get a lower bound on $c(\re)$. This lower bound good enough to show that $\rho$ is positive on $X\ssm D$, from which the global positivity follows easily. \\
\item[$Step \, 4.$] Plugging that new input in the equation satisfied by $c(\rho_{\ep})$, one gets a uniform upper bound for $c(\rho_{\ep})$; one can combine this with the results from Step 2 to get estimates at any given order on $\d^2_{t \bar t}\phi_{\ep}$ outside $D$, hence the boundedness of $\rho$.\\ 
\item[$Step \, 5.$] Extend $\rho$ to $X$ by following the argument in \cite[\S 3.3]{Paun12}. \\
\end{enumerate}

\vspace{3mm}

\noindent
\textbf{Acknowledgements.} I would like to thank Mihai P\u{a}un for the interest he showed in this work and the numerous discussions we had about it. I am grateful to Vincent Guedj who pointed out a mistake in an earlier version of the text.

\section{Preliminaries}
\subsection{Differential geometric aspects of families of manifolds}
\label{diff} 
We give here a recollection of some important objects useful for the differential-geometric study of families of Kähler metrics. We refer to e.g. \cite{BP, Bern11, Paun12} for more details. 
\vspace{2mm}

\noindent
\textit{Singular metric on} $K_{X/Y}$. 
Given a holomorphic surjective map $p:X\to Y$ of relative dimension $n$ and a smooth closed $(1,1)$-form $\rho$ on $X$ such that $\rho$ restricts to a Kähler form to each regular fiber $X_y$, $y\in Y\ssm W$, on can cook up a singular hermitian metric $h^{\rho}_{X/Y}$ on the relative canonical bundle $K_{X/Y}:=K_X-p^*K_Y$ in the following way. 

Consider $x\in X$ and a neighborhood $U\ni x$ with trivializing coordinates $(z_1, \ldots, z_{n+m})$ near $x$. Write $y=p(x)$, and choose a system of coordinates $(t_1, \ldots, t_m)$ near $y$. The one can define the local weight $\phi_U$ of $h_{X/Y}^{\rho}$ by the formula:
$$\rho^n \wedge \bigwedge_{k=1}^m p^*(i dt_k \wedge d \bar t_{k}) =e^{\phi_{U}} \bigwedge_{k=1}^{n+m} i dz_k \wedge d \bar z_{k}$$
Of course $\phi_U$ (hence $h_{X/Y}^{\rho}$) is smooth over $X_0$, but near the singular fibers the left hand side will vanish, which translates into $\phi_U$ being $-\infty$ (but it is still defined as a singular hermitian metric). We refer to \cite[\S 3.1]{Paun12} for more details. We write $\Theta_{h^{\rho}_{X/Y}}(K_{X/Y})$ for the Chern curvature of the (singular) hermitian line bundle $(K_{X/Y}, h^{\rho}_{X/Y})$.  \\

\noindent
\textit{The canonical lift of} $\frac{\d}{\d t}$. 
Assume for now on that $p:X\to \mathbb D$ is a smooth fibration onto the unit disk of $\mathbb C$, and let us recall the construction of the so-called canonical lift of $\frac{\d}{\d t}$ to $X$. Given a smooth closed $(1,1)$-form $\rho$ on $X$ such that $\rho$ is a Kähler form in restriction to the fibers, there is a way to lift $\frac{\d}{\d t}$ canonically with respect to $\rho$ (cf \cite{Siu, Schum, Bern11}). This means that one can construct a unique vector field $v_{\rho}$ on $X$ such that $p_*{v_{\rho}}=\frac{\d}{\d t}$ and $\la v_{\rho}, w\ra_{\rho}=0$ for any vector $w\in T^{1,0}_{X_t}$. 

\noindent
Choosing local coordinates $(z,t)$ on $X$ such that $p(z,t)=t$, and a local potential $\varphi$ of $\rho$ on this chart, one first introduces the vector field $w_{\rho}$ by the relation 
$$\iota_{w_{\rho}}(\ddc \vp) = \bar \d \dot{\vp}_t$$
where $\dot{\vp}_t:=\frac{\d \vp}{\d t}$; in other words,  $w_{\rho}$ is the complex gradient of $\dot{\vp}_t$ with respect to $\rho_{|X_t}$. Then one can prove that 
$$v_{\rho}:=\frac{\d}{\d t} - w_{\rho}$$
is a well-defined smooth vector field over $X$ that lifts $\frac{\d}{\d t}$ in the canonical way explained above. If $\rho$ is locally given by 
$$\rho_{t\bar t}\,idt \wedge d\bar t+ \sum_{\alpha} \rho_{\alpha \bar t }\, idz_{\alpha} \wedge d\bar t+ \sum_{\alpha} {\rho}_{t \bar \alpha} \, idt \wedge d \bar z_{\alpha}+\sum_{\alpha, \beta} \rho_{\alpha, \bar \beta} \,idz_{\alpha} \wedge d\bar z_{\beta} $$
then one has $$v_{\rho}=\frac{\d}{\d t} -\sum_{\alpha, \beta} \rho^{\bar \beta \alpha }\rho_{t \bar \beta}\frac{\d}{\d z_{\alpha}}$$ 

\noindent
\textit{Geodesic curvature}. Given the situation above of a smooth fibration $p:X\to \mathbb D$, one defines the geodesic curvature of $\rho$ to be the smooth function $c(\rho)$ defined on $X$ by $c(\rho):=\la v_{\rho}, v_{\rho}\ra_{\rho}$. Equivalently, $c(\rho)$ can be defined by the relation
$$\rho^{n+1}= c(\rho) \rho^n \wedge idt \wedge d\bar t$$
It measure the potential lack of positivity of $\rho$ on $X$ (essentially, it is the eigenvalue of $\rho$ in the transverse directions). Expressed in terms of a local potential $\varphi$ of $\rho$, one finds $c(\rho)= \d^2_{t\bar t}\vp- |\bar \d_z\d_t \vp|	^2$. Also, one has the formula \cite[Lemma 4.1]{Bern11}:
$$\iota_{v_{\rho}}\rho= -c(\rho)\, i d\bar t$$
and in particular $v_{\rho}$ is a Killing vector field (for $\rho_{|X_t}$) if $c(\rho)$ vanishes on $X_t$.
Finally, one can find an expression in coordinates: 
$$c(\rho)= \rho_{t\bar t}- \sum_{\alpha, \beta} \rho^{\bar \alpha \beta}\rho_{t\bar \alpha}\rho_{\beta \bar t}$$

\subsection{Metrics with conic singularities}
\label{conic}
Let $X$ be a complex Kähler manifold. 

A divisor $D$ (formal $\R$-linear combinations of hypersurfaces) is said to have simple normal crossing support if near any point in its support, $\Supp(D)$ is given by $(z_1 \,\cdots \, z_d=0)$ for some holomorphic system of coordinates $(z_i)$, and if all its irreducible components (for the Zariski topology) are smooth.  

Given a $\R$-divisor $D= \sum (1-\beta_k)D_k$ with simple normal crossing support such that $\beta_k\in (0,1)$ for all $k$, we can associate the notion of Kähler metric with \textit{conic singularities} along $D$: it is a Kähler metric $\om$ on $X\setminus(\cup D_k)$ which is quasi-isometric to the model metric with conic singularities: more precisely, near each point $p\in \Supp(D)$ where $(X,D)$ is isomorphic to the pair $(\mathbb D^n, \sum_{k=1}^r (1-\beta_k)[z_k=0])$ up to relabelling the $\beta_k$'s, we ask $\om$ to satisfy under this identification:
\[C^{-1} \om_{\rm cone} \le \om \le C \om_{\rm cone}\]
for some constant $C>0$, and where 
\[\om_{\rm cone}:=\sum_{k=1}^d \frac{1}{|z_k|^{2(1-\beta_k)}}i dz_k\wedge d\bar z_k +\sum_{k=d+1}^n i dz_k\wedge d\bar z_k \] 
is the model cone metric with cone angles $2\pi \beta_k$ along $(z_k=0)$. \\

This type of metrics arise naturally in the theory of Kähler-Einstein metrics for pairs. More precisely, one has the following theorem \cite{GP}

\begin{thms}\cite{GP}
Let $X$ be a compact Kähler manifold, and $D= \sum (1-\beta_k)[s_k=0]$ a divisor with simple normal crossing support. Let $\om$ be a Kähler metric on $X$, $dV$ a smooth volume form, and let $\mu \in \R$. Then any weak solution $\omvp=\om + \ddc \vp$ with $\vp \in L^{\infty}(X)$ of 
\[(\om+\ddc \vp)^n = \frac{e^{\mu \vp}dV}{\prod |s_k|^{2(1-\beta_k)}} \] 
has conic singularities along $D$. \\
\end{thms}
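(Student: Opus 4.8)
The plan is to prove the statement by a regularization procedure combined with \emph{a priori} estimates that are uniform in the regularization parameter; this is essentially the argument of \cite{GP}, whose architecture I sketch. The crucial point is a two-sided Laplacian estimate comparing $\om_\vp$ to an approximate cone metric. \emph{Step 1 (regularization).} For $\ep>0$, fixing a smooth metric $h_k$ on $\mathcal O(D_k)$, I would solve the complex Monge--Ampère equations
\[
(\om+\ddc\vp_\ep)^n=c_\ep\,\frac{e^{\mu\,\vp_\ep}\,dV}{\prod_k(|s_k|^2_{h_k}+\ep^2)^{1-\beta_k}}
\]
with $\vp_\ep$ smooth on $X$, where $c_\ep>0$ is a normalizing constant (needed only when $\mu=0$, and then converging to a positive limit since $1-\beta_k<1$). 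Solvability and uniqueness are classical here because the right-hand side is a smooth positive volume form (Aubin--Yau and the comparison principle when $\mu\le0$; the $C^0$ bound and uniqueness follow directly from the maximum principle when $\mu>0$). By Ko\l odziej's estimate the $\vp_\ep$ are uniformly bounded in $L^\infty(X)$, and by uniqueness of bounded Monge--Ampère solutions $\vp_\ep\to\vp$ in $L^\infty(X)$. It therefore suffices to show that $\om_{\vp_\ep}$ is, uniformly in $\ep$, quasi-isometric to the model cone metric on each coordinate chart adapted to $D$: the conclusion for $\om_\vp$ follows by passing to the limit.

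\emph{Step 2 (the uniform Laplacian estimate).} This is the heart of the matter. One first builds a family $\om_{C,\ep}$ of genuine smooth Kähler metrics on $X$ — in a chart where $D=\sum_{k\le d}(1-\beta_k)[z_k=0]$ modelled on $A_0\,\om+\ddc\big(\sum_{k\le d}\chi_\ep(|z_k|^2)+\sum_{k>d}|z_k|^2\big)$ for $A_0\gg1$ and $\chi_\ep$ a cut-off primitive of $t\mapsto(t+\ep^2)^{\beta_k-1}$, patched globally via the functions $|s_k|^2_{h_k}$ — enjoying two properties: as $\ep\to0$ it converges to a metric quasi-isometric to $\om_{\rm cone}$, and, crucially, its holomorphic bisectional curvature is bounded \emph{above} uniformly in $\ep$. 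With such a reference metric, a maximum principle argument in the spirit of Yau's Laplacian estimate, based on the Chern--Lu inequality for the identity map $(X,\om_{\vp_\ep})\to(X,\om_{C,\ep})$ applied to an auxiliary function of the form $\log\big(\mathrm{tr}_{\om_{\vp_\ep}}\om_{C,\ep}\big)$ minus a large multiple of a bounded potential, gives
\[
\mathrm{tr}_{\om_{\vp_\ep}}\om_{C,\ep}\le C
\]
with $C$ independent of $\ep$. The two ingredients of Chern--Lu are exactly available: (i) a uniform lower bound $\Ric\,\om_{\vp_\ep}\ge -C(\om_{\vp_\ep}+\om_{C,\ep})$, read off the equation after noting that $\ddc\log(|s_k|^2_{h_k}+\ep^2)$ is bounded below uniformly in $\ep$, its only negative contribution being the curvature of $h_k$; and (ii) the upper bound on the bisectional curvature of $\om_{C,\ep}$. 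The estimate $\mathrm{tr}_{\om_{\vp_\ep}}\om_{C,\ep}\le C$ is equivalent to $\om_{\vp_\ep}\ge C^{-1}\om_{C,\ep}$; the reverse inequality then comes for free, because $\om_{\vp_\ep}^n$ equals $\om_{C,\ep}^n$ up to a factor bounded above and below uniformly in $\ep$, so elementary linear algebra upgrades the lower bound to $\om_{\vp_\ep}\le C'\om_{C,\ep}$.

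\emph{Step 3 (higher regularity and limit).} On any compact subset of $X\smallsetminus\Supp(D)$ the metrics $\om_{C,\ep}$ converge smoothly, so by Step 2 the Monge--Ampère equation for $\vp_\ep$ is uniformly elliptic there, with smooth right-hand side; Evans--Krylov and Schauder theory then yield, for every $k$, uniform $\mathscr{C}^{k}$-bounds on $\vp_\ep$ on compact subsets away from $D$. Letting $\ep\to0$ and using $\vp_\ep\to\vp$, one gets that $\vp$ is smooth on $X\smallsetminus\Supp(D)$, that $\om_\vp=\om+\ddc\vp$ is a Kähler form there solving the equation, and — passing the bound of Step 2 to the limit — that $C^{-1}\om_{\rm cone}\le\om_\vp\le C\om_{\rm cone}$ locally near $\Supp(D)$. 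By definition, this is exactly the assertion that $\om_\vp$ has conic singularities along $D$.

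\emph{Main obstacle.} What makes this genuinely harder than the smooth Monge--Ampère theory is Step 2: one must smooth the (flat) model cone metric in such a way that the bisectional curvature of $\om_{C,\ep}$ does not blow up as $\ep\to0$ — a naive choice produces curvature of size $\ep^{-2}$ — and one must check that the Chern--Lu/maximum-principle argument closes \emph{simultaneously} for all components of the simple normal crossing divisor; carried out in a fixed chart, the latter is what forces a barrier term on the boundary of the chart in order to locate the maximum. These are the technical points established in \cite{GP}.
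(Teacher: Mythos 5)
Your overall architecture (regularize $|s_k|^2\rightsquigarrow|s_k|^2+\ep^2$, uniform $L^\infty$ bound, a uniform Laplacian estimate against an approximate cone reference metric, then Evans--Krylov/Schauder away from $D$ and passage to the limit) is indeed the skeleton of the proof in \cite{GP}. But the pivotal Step 2 rests on a claim that is not just unproved but false: no family $\om_{C,\ep}$ of smooth Kähler metrics converging to a metric quasi-isometric to $\om_{\rm cone}$ can have holomorphic bisectional curvature bounded \emph{above} uniformly in $\ep$. Already in the one-dimensional model $(|z|^2+\ep^2)^{\beta-1}\,i\,dz\wedge d\bar z$ the Gauss curvature equals $2(1-\beta)\ep^2(|z|^2+\ep^2)^{-1-\beta}$, which blows up like $\ep^{-2\beta}$ at the origin; and this is unavoidable, since the cone point carries a positive curvature mass $\approx 2\pi(1-\beta)$ which any approximation must concentrate on a region of area $O(\ep^{2\beta})$, forcing $\sup K\gtrsim\ep^{-2\beta}$ on transverse discs. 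Since the Chern--Lu inequality for ${\rm id}:(X,\om_{\vp_\ep})\to(X,\om_{C,\ep})$ needs precisely an upper bound on the bisectional curvature of the target, the bad term is uncontrollable and your maximum-principle argument does not close; the remark in your "main obstacle" paragraph that a clever smoothing keeps the curvature bounded cannot be realized.

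What \cite{GP} (building on the computations of \cite{CGP}) actually uses is the opposite curvature input, which \emph{is} available: the regularized cone metrics satisfy a Griffiths-type \emph{lower} bound modulo $dd^c$ of uniformly bounded functions, $i\Theta_{\om_{C,\ep}}(T_X)\ge -(C\,\om_{C,\ep}+\ddc\Psi_\ep)\otimes\mathrm{Id}$ with $\sup|\Psi_\ep|\le C$ (the unbounded part of the curvature is on the positive side, which is harmless here; this is exactly hypothesis $(iv)$ quoted in Proposition \ref{prop:gradient} of the present paper). One then runs the Aubin--Yau/Siu estimate on $\tr_{\om_{C,\ep}}\om_{\vp_\ep}$, using the modified test function $\log\tr_{\om_{C,\ep}}\om_{\vp_\ep}-A(\vp_{\ep}+\Psi_\ep)$ so that the term $\tr_{\om_{\vp_\ep}}\ddc\Psi_\ep$ is absorbed by $\Delta_{\om_{\vp_\ep}}\Psi_\ep$; this yields $\om_{\vp_\ep}\le C\,\om_{C,\ep}$ first, and the reverse inequality then follows from the Monge--Ampère equation by the determinant argument (i.e.\ the two bounds are obtained in the order opposite to yours). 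A secondary, lesser point: since the theorem concerns an \emph{arbitrary} bounded weak solution and $\mu$ may be $\le 0$, you cannot invoke "uniqueness of bounded solutions" to get $\vp_\ep\to\vp$; one needs Ko\l odziej-type stability (comparing the densities $e^{\mu\vp}$ and $e^{\mu\vp_\ep}$) or a similar argument to identify the limit with the given solution.
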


Moreover, it is not difficult to see that if $\mu>0$, then such a weak solution always exists (cf \cite{CGP}). Given this statement, constructing conic Kähler-Einstein metrics boils down to checking some cohomological condition. More precisely, if $\gamma$ is a smooth representative of a class $\{\gamma\}$ such that $c_1(K_X+D)+\{\gamma\}$ is a Kähler class, then there exists a smooth volume form $dV$ such that $\om:=-\Ric(dV)+\sum (1-\beta_k) \Theta_{h_k}(D_k)+ \gamma$ is a Kähler form, where $D_k=[s_k=0]$ and $\Theta_{h_k}(D_k)$ is the Chern curvature of a smooth hermitian metric $h_k$ (denoted abusively $|\cdotp |$ in the theorem above) on $\mathcal O_X(D_k)$. Solving the Monge-Ampère equation above with this data and $\mu=1$ produces then a metric $\omvp$ with conic singularities along $D$, and such that in the sense of currents: 
$$\Ric(\omvp)= -\omvp+\sum (1-\beta_k)[D_k]+\gamma$$
We call such a metric a twisted conic Kähler-Einstein metric.

\subsection{Setting}

Because positivity (as well as smoothness) can be checked locally on one-dimensional bases, we will assume until the end (except for \S \ref{extension}) that $X$ is a Kähler manifold equipped with a smooth and proper morphism $p:X\to \mathbb D$ to the unit disk in $\mathbb C$. Moreover, the assumptions allows us to assume that $D=\sum_{i=1}^N D_i\subset X$ has simple normal crossings and  that each of its components is transverse to every fiber of $p$. Write $X_t:=p^{-1}(t)$, $D_{i,t}:={D_i}_{|X_{t}}$ and suppose that for any $t\in \mathbb D$ we have $c_1(K_{X_t}+\sum_i(1-\beta_i) D_{i,t})+\{ \gamma \}_{|X_{t}}$ is a Kähler class for some numbers $\beta_i \in (0,1)$. Because of the assumptions, every fiber $X_t$ is smooth, so the potential conflict of notation between the fiber $p^{-1}(0)$ and the regular locus of the fibration will hopefully not cause any confusion.

One chooses a background Kähler metric $\omega$ on $X$; it induces a smooth $(1,1)$ form $\Theta_{h^{\om}_{X/\DD}}(K_{X/\DD})$ over $X$ belonging to $c_1(K_{X/\DD})$, cf \S \ref{diff}. Fix smooth hermitian metrics on $\Ox(D_i)$, their Chern curvature forms $\theta_{D_i}$ are smooth $(1,1)$forms representing $c_1(D_i)$. We write $\tt:={\theta_{D_i}}_{|X_t}$, and $\gamma_t:=\gamma_{|X_t}$. One chooses sections $s_i$ cutting out $D_i$. Thanks to \cite{GP} one can solve on each fiber $X_t$ the equation:
$$ (\Tht+\sum(1-\beta_i)\tt+\gamma_t+\ddc \vp_t)^n = \frac{e^{\vp_t}\om^n}{\prod_i|s_i|^{2(1-\beta_i)}}$$
to obtain a conic metric $\omt$ on $X_t$ such that
\begin{equation}
\label{eq:0}
\Ric \omt= -\omt + \sum_i(1-\beta_i)[D_{i,t}] +\gamma_t
\end{equation}
We can glue the potentials $\vp_t$ to get a function $\vp$ on $X$. It is not difficult to see that $\vp$ is locally bounded (cf first item of Proposition \ref{prop:est}), hence one gets a closed $(1,1)$ current $$\rho:=\Theta_{h^{\om}_{X/\DD}}(K_{X/\DD})+\sum_i(1-\beta_i) \theta_{D_i}+\gamma + \ddc \vp$$ on $X$ belonging to $c_1(K_{X/\DD}+\sum_i (1-\beta_i)D_i)+\{\gamma\}$. This is the current we are interested in; more precisely, we want to show that $\rho$ is positive, smooth outside $D$ and dominated by a conic metric on $X$. 

\section{Estimates in the fiber directions}
\subsection{Approximate fiberwise conic metrics}
As recalled in the introductory sections, working directly with the fiberwise Kähler-Einstein conic metrics (hence with $\rho$) involves a great deal of complications. Instead, one will proceed by approximation and try to get uniform estimates along the process. 

Let $\ep>0$; for each $t\in \DD$, one can solve the equation
\begin{equation}
\label{eq:eps0}
 (\Tht+\sum_i(1-\beta_i)\tt+\gamma_t+ \ddc \vp_{t,\ep})^n = \frac{e^{\vp_{t, \ep}}\om^n}{\prod_i(|s_i|^{2}+\ep^2)^{1-\beta_i}}
 \end{equation}
 thanks to Aubin-Yau theorem \cite{Aubin, Yau78}. It yields a smooth metric $\omte$ on $X_t$ such that 
\begin{equation}
\label{eq:eps}
\Ric \omte = -\omte +\sum_i(1-\beta_i) w_{i,\ep}+\gamma_t
\end{equation}
where $w_{i,\ep}=\tt+\ddc \log(|s_i|^2+\ep^2) = \frac{\ep^2 |D's_i|^2}{(|s_i|^2+\ep^2)^2}+\frac{\ep^2}{|s_i|^2+\ep^2} \, \tt$ approximates the current of integration on $D_{i,t}$. Here again, one can glue the potentials $\vp_{t,\ep}$ to get a function $\vpe$ on $X$, which in turns defines a current $\rho_{\ep}:=\Tht+\sum_i(1-\beta_i)\tt+\gamma+\ddc \vp_{\ep}$ on the whole $X$ by the first item of Proposition \ref{prop:est}. 

\noindent 
A simple yet useful observation is that because $D$ is transverse to the fiber, then any (approximate) conic metric on $X$ will restrict to a (approximate) conic metric on the fibers. Because of this, one can construct a reference metric that encodes the behavior of $\omte$, cf \cite[\S 3]{CGP}. Let us briefly recall the construction here. First, one introduces the function $\chi$ on $[\ep^2, +\infty)$ by 
$$\chi(\varepsilon^2+ t)= \frac 1 {\beta}\int_0^t \frac{(\varepsilon^2+ r)^{\beta}- \varepsilon^{2\beta}}{ r}dr$$
An easy computation shows that: 
$$\d \bar \d \big( \chi(\varepsilon^2+ |s_i|^2)\big)= \frac{\langle D^\prime s_i, D^\prime s_i\rangle}{(\varepsilon^2+ |s_i|^2)^{1-\beta_i}}- \frac{1}{\beta_i}
(\varepsilon^2+ |s_i|^2)^{\beta_i}- \varepsilon^{2\beta_i}) \, \theta_{D_i}$$
If we set $\chi_{i,\ep}:=  \chi(\varepsilon^2+ |s_i|^2)$ and $\chi_{\ep}=\sum_i \chi_{i,\ep}$, then the metric $\ombe:=\om+  \ddc \chi_{\ep}$ is a Kähler metric on $X$ (say when $\ep$ is small enough, and up to rescaling the hermitian metrics on $\Ox(D_i)$), and is uniformly quasi-isometric to 
\[\sum_{k=1}^r \frac{i \,dz_k\wedge d\bar z_{k}}{(|z_k|^{2}+\ep^2)^{1-\beta_k}} +\sum_{k=r+1}^n i \, dz_k\wedge d\bar z_k\]
near every point $p\in D$ where $D=(z_1\cdots z_r=0)$. Of course, $\ombte$ is (uniformly) quasi-isometric to $\om+ \sum_i \ddc (|s_i|^2+\ep^2)^{\beta_i}$, but it has better curvature properties (cf computations of \cite{CGP, GP}.
\noindent
For each fixed $t$, we know from \cite{GP} that $\vp_{t,\ep}$ converges uniformly to $\vp_t$ on $X_t$ (and smoothly outside $D_t$) and that $\omte$ is uniformly (in $\ep$) equivalent to $\ombte:={\ombe}_{|X_t}$. The following proposition shows uniformity in the variable $t$ of those results:

\begin{prop} 
\label{prop:est}
Up to shrinking $\DD$, there exists $C>0$ independent of $t$ and $\ep$ such that:
\begin{enumerate}
\item[$1.$] $||\vp_{t,\ep}||_{L^{\infty}(X_t)} \le C$
\item[$2.$] $C^{-1}\ombte \le \omte \le C \ombte$
\item[$3.$] $\Ric \omte \ge -C \omte$
\item[$4.$] The Sobolev and Poincaré constants of $\omte$ are uniformly bounded in $t,\ep$.
\end{enumerate}
\end{prop}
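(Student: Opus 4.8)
The plan is to establish the four estimates in a definite order, each feeding into the next, with the Monge–Ampère equation \eqref{eq:eps0} and the reference metric $\ombte$ as the two main tools. The key point throughout is that everything happens in a relatively compact family of fibers over a shrunken disk, so that the relevant constants (the Kähler class data, the volume forms $\om^n|_{X_t}$, the curvature of the twisting forms, the Sobolev constants of $\om|_{X_t}$, etc.) vary continuously and hence are uniformly controlled. First I would handle item $1$: the $L^\infty$ bound on $\vp_{t,\ep}$. This is a standard Kołodziej-type a priori estimate for the complex Monge–Ampère equation, applied fiberwise, but one needs to verify the inputs are uniform. The right-hand side of \eqref{eq:eps0} is $e^{\vp_{t,\ep}} f_{t,\ep}\, \om^n$ with $f_{t,\ep} = \prod_i(|s_i|^2+\ep^2)^{-(1-\beta_i)}$; since $\ep$ only helps, $f_{t,\ep} \le f_{t,0} = \prod_i |s_i|^{-2(1-\beta_i)}$, and the latter lies in $L^p(X_t,\om^n)$ for some $p>1$ with a norm that is uniform in $t$ (because the $\beta_i<1$ and the geometry of $D_{i,t}\subset X_t$ varies continuously). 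The reference form $\Tht + \sum_i(1-\beta_i)\tt+\gamma_t$ has uniformly bounded cohomology class and is uniformly dominated by $\om$; one can subtract off a fixed normalization and invoke the uniform version of Kołodziej's estimate (as in \cite{GP,CGP,EGZ}) together with the sign of the exponential term to get both the upper and lower bounds on $\vp_{t,\ep}$.

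Next, item $2$: the uniform two-sided comparison $C^{-1}\ombte \le \omte \le C\ombte$. The upper bound is the delicate direction and is obtained by a Chern–Lu / Aubin–Yau Laplacian inequality: one computes $\Delta_{\omte}\log\tr_{\omte}\ombte$ using \eqref{eq:eps} and the curvature of $\ombte$. The crucial fact, recalled in the excerpt, is that $\ombte$ has \emph{good} (uniformly bounded above) holomorphic bisectional curvature in the conic sense, computed in \cite{CGP,GP}; combined with $\Ric\omte = -\omte + \sum_i(1-\beta_i)w_{i,\ep}+\gamma_t$ and the fact that the terms $w_{i,\ep}$, while not semipositive, have a controlled negative part dominated by $\ombte$, one gets an inequality of the form $\Delta_{\omte}(\log\tr_{\omte}\ombte - A\vp_{t,\ep}) \ge \tr_{\omte}\ombte - C$ for a suitable constant $A$. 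Applying the maximum principle on $X_t$ (the function is bounded because away from $D$ everything is smooth, and near $D$ one uses the conic behaviour / an approximation argument as in \cite{GP}) and using item $1$ gives $\tr_{\omte}\ombte \le C$; the reverse estimate follows by feeding this back into the Monge–Ampère equation $\omte^n = e^{\vp_{t,\ep}}f_{t,\ep}\om^n$, which controls $\tr_{\ombte}\omte$ since $\ombte^n$ is comparable to $f_{t,\ep}\om^n$. Uniformity in $t$ comes from the uniformity of all curvature bounds and of item $1$.

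Item $3$, the lower Ricci bound $\Ric\omte \ge -C\omte$, is then almost immediate: from \eqref{eq:eps} one has $\Ric\omte = -\omte + \sum_i(1-\beta_i)w_{i,\ep}+\gamma_t$, where $\gamma_t\ge 0$, $w_{i,\ep} = \frac{\ep^2|D's_i|^2}{(|s_i|^2+\ep^2)^2} + \frac{\ep^2}{|s_i|^2+\ep^2}\tt \ge -C_0\om \ge -C_0 C\,\omte$ by item $2$ (the first summand of $w_{i,\ep}$ being nonnegative and the second being bounded below by $-C_0\om$ with $C_0$ uniform). Item $4$, the uniform Sobolev and Poincaré constants, follows from items $2$ and $3$ via the standard theorem relating these constants to a lower Ricci bound, a positive lower bound on the volume, and an upper diameter bound; the volume $\int_{X_t}\omte^n = \int_{X_t}(\Tht+\dots)^n$ is a cohomological constant, hence uniform, and the diameter bound follows from item $2$ since $\ombte$ has uniformly bounded diameter (it is a fixed-up-to-$\ep$ model near $D$ and comparable to $\om$ away from it) — alternatively one cites the uniform Sobolev inequality for conic metrics established in \cite{GP}.

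The main obstacle is item $2$, specifically the upper bound $\omte \le C\ombte$. The difficulty is threefold: (a) the twisting forms $w_{i,\ep}$ appearing in the Ricci identity are genuinely not semipositive, so the naive Chern–Lu argument does not close, and one must exploit the precise structure of $\ombte$ (its good curvature) to absorb the bad terms; (b) the maximum principle must be justified on the non-compact manifold $X_t\setminus D_t$, which requires either the a priori conic regularity from \cite{GP} or a careful barrier/approximation argument; and (c) all constants must be tracked to be independent of both $\ep$ and $t$, which means one cannot simply quote the single-fiber results of \cite{GP} verbatim but must revisit the estimates with the continuity-in-$t$ of the data made explicit. Once item $2$ is in place uniformly, items $3$ and $4$ are routine consequences.
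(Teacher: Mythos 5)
Your proposal is correct and, for items 2--4, follows essentially the same route as the paper: the two-sided comparison with $\ombte$ is exactly the Laplacian/Chern--Lu estimate of \cite[Proposition 1]{GP} (whose proof handles the non-semipositive terms via the auxiliary function $\Psi$, i.e.\ the structure you describe in your obstacle (a)), and the paper's only added content there is the observation that transversality of $D$ to the fibers makes those local computations, hence the constants, uniform in $t$ --- a point you also flag; items 3 and 4 (Ricci lower bound from \eqref{eq:eps} plus $\omte\ge C^{-1}\om$, then diameter + Ricci + volume giving Poincaré via Yau's eigenvalue estimate and Sobolev via Croke) are identical in both treatments. The genuine difference is item 1: you invoke a uniform Ko\l{}odziej-type $L^p$ estimate for the singular density $\prod_i|s_i|^{-2(1-\beta_i)}$, whereas the paper avoids pluripotential theory altogether by changing the reference metric: writing $u_{t,\ep}=\vp_{t,\ep}-\psi_t-\chi_\ep$ so that the equation is rewritten with respect to the approximate conic form ${\bar\om}_{\beta,t,\ep}$, the singular density is absorbed into ${\bar\om}_{\beta,t,\ep}^n$ and the new right-hand side $e^{u_{t,\ep}+\psi_t+\chi_\ep+f_{t,\ep}}{\bar\om}_{\beta,t,\ep}^n$ has a uniformly bounded $f_{t,\ep}$, so the plain maximum principle (using the $e^{u}$ factor, i.e.\ $\mu=1>0$) gives the two-sided bound directly. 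Your route is valid --- the uniformity in $t$ of the $L^p$ norm and of the class data is indeed available, and the exponential sign fixes the normalization as you say --- but the paper's trick is more elementary and already produces the reference form ${\bar\om}_{\beta,t,\ep}$ needed for item 2, so the two steps dovetail; if you keep the Ko\l{}odziej route you should still introduce that change of reference metric before quoting \cite[Proposition 1]{GP}, since the Laplacian estimate is stated relative to an approximate conic background.
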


\begin{proof}
As $c_1(K_{X_t}+\sum(1-\beta_i)D_{i,t})+\{\gamma\}_{|X_t}$ is a Kähler class, there exists $\psi_t \in \mathscr C^{\infty}(X_t)$ such that $\Tht+\sum(1-\beta_i)\tt+\gamma_t + \ddc \psi_t$ is a Kähler form on $X_t$. Moreover, one can assume without loss of generality that $\psi_t$ induces a smooth function $\psi:x\mapsto \psi_{p(x)}(x)$. Therefore, up to shrinking $\DD$, the Kähler form ${\bar \om}_{\beta,t,\ep}:=\Tht+\sum_i(1-\beta_i)\tt+\gamma_t+ \ddc \psi_t+\ddc \chi_{\ep}$ is quasi-isometric to $\ombte$ uniformly with respect to $t,\ep$. Let now $u_{t,\ep}:=\vp_{t,\ep}-\psi_t-\chi_{\ep}$; this function on $X_t$ is solution of 
$$({\bar \om}_{\beta,t,\ep} + \ddc u_{t,\ep})^n = e^{u_{t,\ep}+\psi_t+\chi_{\ep}+f_{t,\ep}} \,{\bar \om}_{\beta,t,\ep}^n$$
where $f_{t,\ep}=\log\left(\om^n/\prod_i(|s_i|^2+\ep^2)^{1-\beta_i} {\bar \om}_{\beta,t,\ep}^n\right)$ is uniformly bounded in $t,\ep$. As $\psi_t, \chi_{\ep}, f_{t,\ep}$ are uniformly bounded, the maximum principle guarantees that $u_{t,\ep}$ is uniformly bounded, hence
$$||\vp_{t,\ep}||_{L^{\infty}(X_t)} \le C$$
for some $C>0$ independent of $t,\ep$. 

Now ${\bar \om}_{\beta,t,\ep}$ is the sum of a Kähler form (varying smoothly in $t$) and $\ddc \chi_{\ep}$; as $D$ is transverse to $X_t$, the local computations of \cite[\S 3]{GP} apply uniformly in $t$, and therefore the Laplacian estimate \cite[Proposition 1]{GP} holds as well: $$C^{-1}{\bar \om}_{\beta,t,\ep} \le \omte \le C{\bar \om}_{\beta,t,\ep}$$ As ${\bar \om}_{\beta,t,\ep}$ and $\ombte$ are uniformly quasi-isometric, we get 2. 

The previous estimate immediately a bound on the diameter of $X_t$ with respect to $\omte$:
\begin{equation}
\label{diam}
\mathrm{diam}(X_t, \omte) \le C
\end{equation}
as $\ombte$ is dominated by the restriction of a conic metric for $(X,D)$ on $X_t$, for which finiteness of the diameter can be checked easily. Moreover, recall from equation \eqref{eq:eps} that $\Ric \omte \ge - \omte + \sum_i \frac{\ep^2}{|s_i|^2+\ep^2} \, \tt$. From the estimate 2. above, we also deduce that there exists $C>0$ such that $\omte \ge C^{-1} \om_{|X_t}$. As a consequence of those two inequalities, we find that 
\begin{equation}
\label{ricci}
\Ric \omte \ge -C \omte
\end{equation}
for some uniform $C>0$. Combining equations \eqref{diam}-\eqref{ricci} with Yau's results \cite{Yau75} yields the control of the first eigenvalue of the Laplacian $\Delta_{\omte}$. Hence we get a uniform Poincaré inequality. As for the Sobolev one, if follows from \cite{Croke} given the control of the previous geometric quantities. 
\end{proof}

\subsection{Heat kernel estimates}
In the following, one works on a fixed fiber $X_t$, that one will rename $X$ to avoid confusion with the time variable. 
Let us define the operator $L_{\ep}:=-\Delta_{\omte}+\mathrm{Id}$ on the compact Kähler manifold $(X, \ome)$. It is well know that this operator is positive (that is, is $\Le f \ge 0$, then $f\ge 0$). More precisely, the unique solution $f$ to the equation $L_{\ep}f=g$ for a given smooth function $g$ on $X$ satisfies:
$$f(z)=\int_{0}^{+\infty}e^{-t}\int_X P_{\ep}(t,z,w) g(w) dV_{\ome}(w)$$ 
where $P_{\ep}(t,z,w)>0$ is the heat Kernel of $\frac{\d}{\d t}-\Delta_{\ome}$. From Proposition \ref{prop:est}, one gets a constant $k>0$ such that $\Ric \ome \ge -(2n-1)k \, \ome$; it follows then from \cite{CheegerYau} that:
$$P_{\ep}(t,z,w) \ge \frac{1}{(2\pi t)^n}e^{-\frac{d_{\ome}(z,w)^2}{t}}e^{-kt/4}$$
where $d_{\ome}(z,w)$ is the geodesic distance between $z$ and $w$ with respect to $\ome$. As the the diameter of $(X,\ome)$ is uniformly bounded, one gets 
\begin{equation}
\label{heat0}
\int_{0}^{+\infty}P_{\ep}(t,z,w)dt \ge c>0
\end{equation}
for some uniform $c>0$. In particular, if $g\ge 0$, then $\inf_X f \ge c \int_X g dV_{\ome}$.
This estimation is very useful in the case $D=0$, cf \cite[\S 3]{Schum} but it won't be sufficient for us because of the residual negative error terms due to our approximation.\\

Indeed, later on (cf eq. \eqref{infc} and the few lines below it) one will establish that $L_{\ep}c(\re)\ge -g_{\ep}$ where $g_{\ep}$ is a positive function whose $L^1$ norm tends to $0$ as $\ep\to 0$. One would like to conclude from there that $\liminf_{\ep \to 0}\inf_{X_t} c(\re) \ge 0$, but the lower bound on the heat kernel above won't be of any help. Rather, one should aim for an upper bound; it is provided by \cite[Theorem 16 \& 17]{Davies} who shows that for any $\delta>0$, there exists $c_{\delta}>0$ such that
$$P_{\ep}(t,z,w) \le \frac{c_{\delta}e^{\delta t}e^{-\frac{d_{\ome}(z,w)^2}{(4+\delta)t}}}{\vol(B_z(\sqrt t))^{1/2} \cdotp \vol(B_w(\sqrt t))^{1/2} } $$
where $\vol(B_z(\sqrt t))^{1/2} $ is the volume of the ball of radius $\sqrt{t}$ centered at $z$. Thanks to Proposition \ref{prop:est}, one can easily check that $\vol(B_z(t)) \ge c t^{2n}$ for some uniform $c>0$. Setting $r:=d_{\ome}(z,w)^2$, one gets:
\begin{equation}
\label{heat}
\int_{0}^{+\infty}e^{-t} P_{\ep}(t,z,w)dt \le C \int_{0}^{+\infty}e^{-\frac{r^2}{5t}}e^{-t/2}t^{-n}dt
\end{equation}
for some uniform $C>0$. After substituing $u:=r^2/t$, one gets
$r^{2n-2}\int_{0}^{+\infty}u^{n-2}e^{-u/5}e^{-r^2/2u}du$ which is dominated by $r^{2n-2}\int_{0}^{+\infty}u^{n-2}e^{-u/5}du$ if $n>1$ and by $\sqrt 2 \, r^{-1}\int_{0}^{+\infty}u^{-1/2}e^{-u/5}du$ if $n=1$ (using $e^{-x}\le x^{-1/2}$ for $x\ge 0$). To sum up, one has:
$$\int_{0}^{+\infty}e^{-t} P_{\ep}(t,z,w)dt \le 
\begin{cases}
\frac{C}{d_{\ome}(z,w)^{2n-2}} & \mbox{if } n>1 \\
\frac{C}{d_{\ome}(z,w)} & \mbox{if } n=1
\end{cases}$$
Eventually, one gets the following result:

\begin{prop}
\label{minoration}
Let $f,g$ be smooth functions on $X$ satisfying $\Le f=g$. Then there exist two constant $c,C>0$ independent of $\ep$ such that:
$$f(z)\ge c \int_X g_+dV_{\ome}-C \int_X\frac{g_{-}}{d_{\ome}(z,\cdotp)^{2n-2}}dV_{\ome}$$
if $n>1$ (replace the exponent $2n-2$ by $1$ if $n=1$. Here, $g_+ = \max\{g,0\}$, $g_{-}=\max\{-g,0\}$ and $d_{\ome}(z,w)$ denotes the distance between $z$ and $w$ with respect to the Kähler metric $\ome$ on $X$. 
\end{prop}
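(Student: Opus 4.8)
The plan is to combine the representation formula for the solution of $\Le f = g$ via the heat kernel with the pointwise upper and lower bounds on $\int_0^{+\infty} e^{-t} P_\ep(t,z,w)\, dt$ that were just derived. Since $g = g_+ - g_-$, linearity of the solution operator gives $f = f_+ - f_-$ where $\Le f_{\pm} = g_{\pm}$, so it suffices to bound $f_+$ from below and $f_-$ from above.

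First I would handle $f_+$. Writing $f_+(z) = \int_0^{+\infty} e^{-t} \int_X P_\ep(t,z,w)\, g_+(w)\, dV_{\ome}(w)\, dt$ and applying Fubini (everything is nonnegative), this equals $\int_X \left( \int_0^{+\infty} e^{-t} P_\ep(t,z,w)\, dt \right) g_+(w)\, dV_{\ome}(w)$. By the Cheeger–Yau lower bound recalled above, $\int_0^{+\infty} e^{-t} P_\ep(t,z,w)\, dt \ge \int_0^{+\infty} e^{-t} P_\ep(t,z,w)\, dt \ge c$ for a uniform $c > 0$ (this is exactly \eqref{heat0}, except one keeps the extra $e^{-t}$ factor, which only costs another harmless constant since the relevant mass of the integral over $t$ sits in a bounded range because the diameter is uniformly bounded). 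Hence $f_+(z) \ge c \int_X g_+\, dV_{\ome}$.

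Next I would bound $f_-$ from above in the same way: $f_-(z) = \int_X \left( \int_0^{+\infty} e^{-t} P_\ep(t,z,w)\, dt \right) g_-(w)\, dV_{\ome}(w)$, and now I plug in the upper bound established just before the statement, namely $\int_0^{+\infty} e^{-t} P_\ep(t,z,w)\, dt \le C\, d_{\ome}(z,w)^{-(2n-2)}$ when $n > 1$ (and $\le C\, d_{\ome}(z,w)^{-1}$ when $n=1$), where the constant $C$ is uniform in $\ep$ by Proposition \ref{prop:est} (uniform diameter, uniform Ricci lower bound, uniform volume lower bound $\vol(B_z(\sqrt t)) \ge c t^{2n}$, all feeding into the Davies estimate). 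This gives $f_-(z) \le C \int_X \frac{g_-(w)}{d_{\ome}(z,w)^{2n-2}}\, dV_{\ome}(w)$. Subtracting, $f(z) = f_+(z) - f_-(z) \ge c \int_X g_+\, dV_{\ome} - C \int_X \frac{g_-}{d_{\ome}(z,\cdot)^{2n-2}}\, dV_{\ome}$, which is the claim.

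The only genuine points requiring care — and the closest thing to an obstacle — are the uniformity of all constants in $\ep$ (and implicitly in $t$, the base parameter), which is precisely what Proposition \ref{prop:est} and the two heat-kernel estimates \eqref{heat0}, \eqref{heat} were set up to provide, and the justification that the singular integral $\int_X d_{\ome}(z,w)^{-(2n-2)} g_-(w)\, dV_{\ome}(w)$ is finite and that Fubini applies: near $w = z$ the kernel behaves like $d^{-(2n-2)}$ against a $2n$-real-dimensional volume, so it is locally integrable, and since $g_-$ is bounded (smooth on the compact $X$) there is no issue. One should also note that because the heat kernel $P_\ep$ is positive, the operator $\Le$ is positive and the decomposition $f = f_+ - f_-$ with $f_\pm \ge 0$ is legitimate. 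Everything else is a direct substitution, so I do not anticipate any real difficulty beyond bookkeeping the constants.
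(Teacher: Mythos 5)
Your proof is correct and follows essentially the same route as the paper: both write $f$ via the kernel $\int_0^{+\infty}e^{-t}P_\ep(t,z,\cdot)\,dt$, treat the contributions of $g_+$ and $g_-$ separately (your splitting by linearity into $f_\pm$ is just the paper's splitting of the spatial integral over $\{g\ge 0\}$ and $\{g<0\}$), and apply the Cheeger--Yau lower bound to the positive part and the Davies-type upper bound to the negative part. Your extra remarks on keeping the $e^{-t}$ factor in the lower bound and on Fubini/local integrability of $d_{\ome}(z,\cdot)^{2-2n}$ are harmless refinements of what the paper leaves implicit.
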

 
\begin{proof}
Recall that $P_{\ep}(t,z,w) \ge 0$; thanks to \eqref{heat0} and \eqref{heat}, one has:
 \begin{eqnarray*}
 f(z) &=& \int_{0}^{+\infty}e^t \left(\int_{\{g\ge 0\}} P_{\ep}(t,z,w) g(w) dV_{\ome}(w)+ \int_{\{g<0\}}P_{\ep}(t,z,w) g(w)\, dV_{\ome}(w)\right) dt \\
 & \ge & c \int_{\{g\ge 0\}} g(w) dV_{\ome}(w)- \int_{\{g< 0\}} \left(\int_{0}^{+\infty}e^{-t}P_{\ep}(t,z,w) dt \right)(-g(w)) dV_{\ome}(w)
 \end{eqnarray*}
 hence the result.
\end{proof}

\subsection{A gradient estimate}

We adapt B\l ocki's estimate in the context where a lower bound on the bisectional curvature is not available, but only a weaker bound as in \cite{GP}:

\begin{prop}
\label{prop:gradient}
Let $(X,\om)$ be a compact Kähler manifold, and let $\omvp:=\om+\ddc \vp$ be a Kähler metric satisfying $$\omvp^n= e^{\lambda \vp +F} \om^n$$ for some $F\in \mathscr C^{\infty}(X) $ and $\lambda\in \R$. We assume that there exists $C>0$ and a smooth function $\Psi$ such that:
\begin{enumerate}
\item[$(i)$] $\sup_X |\vp| \le C$
\item[$(ii)$] $\ddc \Psi \ge -C \om$ and $\, \sup_X |\Psi|\le C$
 \item[$(iii)$] $ \sup_X (|F|+|\nabla F|) \le C$
\item[$(iv)$] $i\Theta_{\om}(T_X) \ge -(C\om + \ddc \Psi) \otimes \mathrm{Id}$
\item[$(v)$] $\om' \ge C^{-1}\om$.
\end{enumerate}
Then there exists a constant $A>0$ depending only on $C$ and $n$ such that $|\nabla \vp |_{\om} \le C$.
\end{prop}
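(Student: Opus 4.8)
The plan is to run a Yau–Aubin-type maximum principle argument on the quantity $\beta := |\nabla\vp|^2_{\om}\,e^{-f}$, where $f$ is a suitable function chosen to absorb the bad curvature and lower-order terms; the function $f$ will be built out of $\vp$ and $\Psi$, something like $f = -\kappa\vp + \Psi$ for a large constant $\kappa = \kappa(C,n)$ to be fixed at the end. First I would record the Bochner–Kodaira–Weitzenböck formula for $\Delta_{\omvp}|\nabla\vp|^2_{\om}$. Since the background metric here is $\om$ (not $\omvp$), the natural approach following B\l ocki is to compute $\Delta_{\omvp}$ of $|\d\vp|^2_{\om}$ using the fixed connection of $\om$: one gets terms of the shape $g^{\bar j i}_{\omvp}\bigl(\nabla_i\nabla_k\vp\,\overline{\nabla_j\nabla_k\vp} + \nabla_i\nabla_{\bar k}\vp\,\overline{\nabla_j\nabla_{\bar k}\vp}\bigr)$ (nonnegative), a curvature term involving $i\Theta_\om(T_X)$ contracted against $\omvp$ and $\d\vp\otimes\bar\d\vp$, a term coming from $\Delta_{\omvp}\Delta_\om\vp$ which after using the Monge–Ampère equation $\omvp^n = e^{\lambda\vp+F}\om^n$ becomes $\mathrm{tr}_{\omvp}\bigl(\lambda\,i\d\bar\d\vp + i\d\bar\d F\bigr)$ paired with $\nabla\vp$, and cross terms with $\nabla F$. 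Hypothesis $(iii)$ controls the $F$-contributions, hypothesis $(iv)$ is exactly what lets us dominate the curvature term by $\bigl(C\om + \ddc\Psi\bigr)$ contracted suitably, and hypothesis $(v)$ together with $(ii)$ gives $\mathrm{tr}_{\omvp}\om \le C\,n$ and $\mathrm{tr}_{\omvp}(C\om+\ddc\Psi)\ge 0$ minus a controlled amount — these are the structural inequalities B\l ocki uses, here in the weakened form permitted by $\Psi$.

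Next I would compute $\Delta_{\omvp}\bigl(f\bigr) = \Delta_{\omvp}(-\kappa\vp+\Psi)$. Using the Monge–Ampère equation, $\Delta_{\omvp}\vp = \mathrm{tr}_{\omvp}(\omvp - \om) = n - \mathrm{tr}_{\omvp}\om$, and $\Delta_{\omvp}\Psi = \mathrm{tr}_{\omvp}(\ddc\Psi) \ge -C\,\mathrm{tr}_{\omvp}\om$ by $(ii)$. So $\Delta_{\omvp}f$ contains a term $+\kappa\,\mathrm{tr}_{\omvp}\om$ with favorable sign once $\kappa$ is large; this is the key gain. Then I evaluate $\Delta_{\omvp}\beta$ at an interior maximum point $x_0$ of $\beta$ (which exists by compactness). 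At $x_0$ one has $\nabla\beta = 0$, hence $\nabla|\nabla\vp|^2_\om = |\nabla\vp|^2_\om\,\nabla f$, and $\Delta_{\omvp}\beta \le 0$. Expanding $\Delta_{\omvp}\beta = e^{-f}\bigl(\Delta_{\omvp}|\nabla\vp|^2_\om - |\nabla\vp|^2_\om\,\Delta_{\omvp}f - 2\,\mathrm{Re}\langle\nabla|\nabla\vp|^2_\om,\nabla f\rangle_{\omvp} + |\nabla\vp|^2_\om\,|\nabla f|^2_{\omvp}\bigr)$, substituting the critical-point relation, and inserting the Bochner lower bound together with $\Delta_{\omvp}f \ge \kappa\,\mathrm{tr}_{\omvp}\om - C\,\mathrm{tr}_{\omvp}\om - C n$, one should arrive (after discarding the manifestly nonnegative Hessian square terms and the $|\nabla f|^2_{\omvp}$ term, and after using Cauchy–Schwarz to absorb the cross terms involving $\nabla F$ into a fraction of the Hessian square term, exactly as in \cite{Bl}) at an inequality of the form
\begin{equation*}
0 \;\ge\; c_1(\kappa,C,n)\,\mathrm{tr}_{\omvp}\om \cdot |\nabla\vp|^2_\om \;-\; c_2(C,n)\,|\nabla\vp|^2_\om \;-\; c_3(C,n),
\end{equation*}
valid at $x_0$, where $c_1 > 0$ once $\kappa$ is chosen large enough relative to $C$. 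Since $\mathrm{tr}_{\omvp}\om \ge n\bigl(\om^n/\omvp^n\bigr)^{1/n} = n\,e^{-(\lambda\vp+F)/n} \ge c_0 > 0$ by $(i)$ and $(iii)$, the first term dominates: we get $|\nabla\vp|^2_\om(x_0) \le A'$ for a constant $A'$ depending only on $C$ and $n$. Because $x_0$ is the maximum of $\beta = |\nabla\vp|^2_\om e^{-f}$ and $f$ is bounded by $(i)$, $(ii)$ (note $|f| = |-\kappa\vp+\Psi| \le \kappa C + C$), this gives $\sup_X|\nabla\vp|^2_\om \le e^{\mathrm{osc}\,f}\,\beta(x_0) \le A$, completing the proof.

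The main obstacle — and the whole point of the proposition relative to the classical B\l ocki estimate — is handling the curvature term $i\Theta_\om(T_X)$ without a genuine lower bound: one only has the weaker bound $(iv)$ of the form $i\Theta_\om(T_X) \ge -(C\om + \ddc\Psi)\otimes\mathrm{Id}$. The delicate bookkeeping is to check that when this term appears in the Bochner formula contracted against $g_{\omvp}$ in two slots and $\d\vp\otimes\bar\d\vp$ in the other two, it produces exactly a contribution bounded below by $-|\nabla\vp|^2_\om\,\mathrm{tr}_{\omvp}(C\om+\ddc\Psi)$, which is precisely of the same type as the $|\nabla\vp|^2_\om\,\Delta_{\omvp}f$ term we are subtracting — so it can be absorbed by enlarging $\kappa$. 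One must be careful that the $\ddc\Psi$ part really does cancel against the $\Delta_{\omvp}\Psi$ coming from $f$ rather than compounding; tracking signs and the factor of $|\nabla\vp|^2_\om$ through the interior-maximum substitution is where all the care goes. A secondary technical point is that $\Psi$ is only assumed smooth with $\ddc\Psi \ge -C\om$ (not $\ddc\Psi \ge 0$), so every place $\Delta_{\omvp}\Psi$ appears one picks up an extra $-C\,\mathrm{tr}_{\omvp}\om$ that again must be dominated by the $\kappa\,\mathrm{tr}_{\omvp}\om$ reservoir; this is harmless but needs to be said explicitly.
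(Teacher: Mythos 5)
Your overall framework (maximum principle applied to a weighted gradient quantity) is the right family of arguments, but the specific weight you choose cannot work, and the gap is structural, not just bookkeeping. First, the signs: in your expansion at the maximum the relevant term is $-|\nabla\vp|^2_{\om}\,\Delta_{\omvp}f$, so you need an \emph{upper} bound on $\Delta_{\omvp}f$, whereas the bound you quote ($\Delta_{\omvp}f\ge(\kappa-C)\,\mathrm{tr}_{\omvp}\om-\kappa n$, and note the constant is $\kappa n$, not $Cn$) is a lower bound; equivalently, your effective test function is $\log|\nabla\vp|^2+\kappa\vp-\Psi$, where both signs are oriented the wrong way. In particular the $\Psi$-contribution $-|\nabla\vp|^2\Delta_{\omvp}\Psi$ has the \emph{same} sign as the curvature defect $-|\nabla\vp|^2\,\mathrm{tr}_{\omvp}(\ddc\Psi)$ coming from $(iv)$: they compound instead of cancelling, and since $(ii)$ gives no upper bound on $\ddc\Psi$ (nor any bound on $\nabla\Psi$) this term cannot be estimated at all. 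The only way to use $(iv)$ is to add $+\Psi$ to the log-test-function so that $+\Delta_{\omvp}\Psi$ cancels the curvature term exactly; this is what the paper does by maximizing $\alpha+\Psi$.

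Second, and more fundamentally, even after flipping the signs a weight \emph{linear} in $\vp$ fails. The hoped-for good term $\kappa\,\mathrm{tr}_{\omvp}\om\cdot|\nabla\vp|^2$ comes with the companion $-\kappa n\,|\nabla\vp|^2$, and the only available lower bound $\mathrm{tr}_{\omvp}\om\ge n\,e^{-(\lambda\vp+F)/n}$ can be $<n$, so enlarging $\kappa$ never wins (your claimed inequality with $c_2$ independent of $\kappa$ is not reachable; unlike Yau's Laplacian estimate, there is no algebraic inequality tying $\mathrm{tr}_{\omvp}\om$ to the quantity $|\nabla\vp|^2$ being estimated). Worse, after substituting the critical-point identity $\nabla|\nabla\vp|^2=|\nabla\vp|^2\,\nabla f$, the net first-order contribution is $-|\nabla\vp|^2\,|\nabla f|^2_{\omvp}\sim-\kappa^2|\nabla\vp|^2\,|\nabla\vp|^2_{\omvp}$: a negative term, quadratic in the unknown and growing like $\kappa^2$, which you propose to ``discard'' but which in fact dominates all the linear-in-$|\nabla\vp|^2$ terms. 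The missing idea is exactly B\l ocki's device, which the paper uses: replace $\kappa\vp$ by $\gamma(\vp)$ with $\gamma(t)=At-4(At)^2$, $A=1/C_0$, so that $(\gamma')^2+\gamma''\le-7A^2$; then the formerly fatal term $-\bigl[(\gamma')^2+\gamma''\bigr]\,|\nabla^{\om}\vp|^2_{\omvp}$ flips sign and becomes the decisive positive term, and hypothesis $(v)$ converts it into $\ge 7A^2C^{-1}|\nabla\vp|^2_{\om}$ at the maximum point, after which the remaining terms are bounded using $(iii)$, $(iv)$ (with the $\Psi$-cancellation) and the trace bound $\mathrm{tr}_{\omvp}\om\le Cn$ from $(v)$. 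Without some such strictly concave exponent, no positive term quadratic in $|\nabla\vp|$ ever appears in your inequality, and the argument cannot close.
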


Condition $(v)$ is very constraining (it says that $\om'$ and $\om$ are already uniformly quasi-isometric), and should not be necessary (if $\Psi=0$, B\l ocki's result actually shows that we can discard it) although it does not seem obvious to us how to avoid assuming this condition in this setting of unbounded curvature. 

\begin{proof}
We follow very closely Blocki's proof. Let $\beta:= |\nabla \vp|^2$ (computed with respect to $\om$) and $\alpha:= \log \beta - \gamma \circ \vp$ where $\gamma $ is a function to specify later. Without loss of generality, one can assume $\inf \vp =0$, and we set $\sup \vp =: C_0$. We use the local notation $(g_{i\bar j})$ for $\om$. We work at a point $p\in X$ and choose a system of geodesic coordinates for $\om$ such that $g_{i\bar j}(p)=\delta_{i \bar j}$, $dg_{i\bar j}(p)=0$, and $\vp_{i\bar j}$ is diagonal.
We set $u_{i\bar j}=g_{i\bar j}+\vp_{i\bar j}$ the components of the metric $\om'$. Computations show that
$$\alpha_{p\bar p}=\frac{1}{\beta}\left(R_{j\bar k p \bar p} \vp_j \vp_{\bar k} +2 \mathrm{Re}\sum_ju_{p\bar p j}\vp_{\bar j}+\sum_j |\vp_{jp}|^2+\vp_{p\bar p}^2\right)-2\lambda-\left[(\gamma')^2+\gamma''\right]|\vp_p|^2-\gamma' \vp_{p\bar p}$$
so at $p$, the RHS is non-positive.

By the assumption $(iv)$, we have for all $a,b$: $R_{j\bar k p\bar q} a_j \bar a_{ k} b_p \bar b_{ q} \ge -(C|a_j|^2+\Psi_{j \bar k}a_j \bar a_{k})|b|^2$ and by symmetry of the curvature tensor, we get $R_{j \bar k p \bar q}a_j \bar a_{ k} b_p \bar b_{ q} \ge -(C|b_p|^2+\Psi_{p \bar q}b_p \bar b_{ q})|a|^2$. We apply that to $a= \nabla \vp$ and $b$ the vector with only non-zero component the $p$-th one, equal to $\sqrt{u^{p\bar p}}$, we get:
$u^{p\bar p}R_{j \bar k p \bar p}\vp_k \vp_{\bar l} \ge -(Cu^{p\bar p}+u^{p\bar p}\Psi_{p \bar p})|\nabla \vp|^2$. As a consequence, 
\begin{equation}
\label{courbure}
\frac{1}{\beta} \sum u^{p \bar p} R_{j \bar k p \bar p}\vp_j \vp_{\bar k} \ge -C\sum u^{p\bar p}-\sum_p u^{p\bar p}\Psi_{p\bar p}
\end{equation}
The next term to analyze is 
\begin{equation}
\label{gradient}
\frac{1}{\beta} \sum_p u^{p\bar p}\left( 2 \mathrm{Re}\sum_ju_{p\bar p j}\vp_{\bar j}\right)=\frac{2}{\beta}\mathrm{Re}\sum_j F_j \vp_{\bar j}
\end{equation}
by \cite[1.13]{Bl}, and this term is dominated (in norm) by $2 |\nabla F| \beta^{-1/2}$, hence by $2C \beta^{-1/2}$. Combining these last three relations, we get
\begin{equation}
\label{final}
\Delta' (\alpha+\Psi) \ge (\gamma'-C) \tr_{\om'}\om -2 ||\nabla F|| \beta^{-1/2}-\left[(\gamma')^2+\gamma''\right] |\nabla^{\om} \vp |^2_{\om'}-n \gamma'
\end{equation} 
We fix the function $\gamma$ on $[0,C_0]$ by setting $\gamma(t)= At-4(At)^2$ for $A=\frac{1}{C_0}$. It can be easily checked that on $[0,C_0]$, $(\gamma'^2)+\gamma'' \le -7 A^2$. 

Now we choose $p$ to be a point in $X$ where $\alpha+\Psi$ attains its maximum. Then either $\beta(p)<1$, or $\beta(p)\ge 1$. Assume for the time being that we are in the second case. As a consequence, $\beta^{-1/2}(p)\le 1$, and by $(v)$, equation \eqref{final} gives at $p$:
$$0 \ge \frac{7}{C_0^2} |\nabla^{\om} \vp |^2_{\om'} -C_1$$
for some bigger constant $C_1>0$.
By the equivalence of $\om$ and $\om'$, we get a new constant $C_2$ such that $\beta(p) \le \max(1, C_2)$. Now, if $x$ is any point in $X$, $\log \beta(x) = \alpha(x)+\gamma (\vp(x)) -\Psi(x) \le \alpha(p)+\sup \gamma +\sup |\Psi|\le \log \max(1,C_2)+2(\sup |\gamma|+\sup |\Psi|) \le C_3$. 
\end{proof}

\begin{coro}
\label{est:grad}
Let $\om_{t,\ep}$ the metric solution of Equation \eqref{eq:eps0}, whose potential is $\vp_{t,\ep}$, and let $\ombte$ the reference approximate conic metric from last paragraph. Then there exists $C>0$ independent of $\ep,t$ (up to shrinking $\mathbb D$) such that 
$$|\nabla \vp_{t,\ep} |\le C$$
where the gradient as well as its norm are computed with respect to $\ombte$.
\end{coro}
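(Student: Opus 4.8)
The plan is to apply Proposition~\ref{prop:gradient} fiberwise, taking as reference metric the regularized conic model $\om_0 := {\bar \om}_{\beta,t,\ep}$ introduced in the proof of Proposition~\ref{prop:est}. With that choice, $u_{t,\ep} := \vp_{t,\ep}-\psi_t-\chi_\ep$ solves
$$(\om_0 + \ddc u_{t,\ep})^n = e^{u_{t,\ep}+F_{t,\ep}}\,\om_0^n, \qquad F_{t,\ep} := \psi_t+\chi_\ep+\log\Big(\tfrac{\om^n}{\prod_i(|s_i|^2+\ep^2)^{1-\beta_i}\,\om_0^n}\Big),$$
with $\sup_{X_t}|F_{t,\ep}| \le C$ uniformly in $(t,\ep)$, as already recorded in the proof of Proposition~\ref{prop:est}. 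We run Proposition~\ref{prop:gradient} with this data, $\lambda=1$, and $\om' := \omte = \om_0+\ddc u_{t,\ep}$. Because $D$ is transverse to every fiber, all local computations below take place in a fixed family of coordinate patches adapted to $D$, so uniformity in $t$ is automatic and only the dependence on $\ep$ needs attention.

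Hypotheses $(i)$ and $(v)$ are immediate: the first follows from Proposition~\ref{prop:est}(1) and the uniform boundedness of $\psi_t$ and $\chi_\ep$, the second from Proposition~\ref{prop:est}(2) together with the uniform quasi-isometry $\om_0 \sim \ombte$ and the fact that $\ombte$ dominates (a constant times) $\om_{|X_t}$. For $(iii)$ we already have $|F_{t,\ep}|\le C$, and we bound $\nabla F_{t,\ep}$ with respect to $\om_0$ term by term: $\nabla\psi_t$ is bounded since $\psi_t$ is smooth and $\om_0 \ge C^{-1}\om$; for $\chi_\ep = \sum_i\chi(\ep^2+|s_i|^2)$ the elementary estimate $0 \le \chi'(\ep^2+r) \le \tfrac{1}{\beta_i}(\ep^2+r)^{\beta_i-1}$ gives $|\nabla\chi_\ep|^2_{\om_0} \le C\sum_i(\ep^2+|s_i|^2)^{\beta_i}$, which is bounded (and tends to $0$ along $D$); finally the gradient of the logarithmic term is controlled near $D$ by the local expansion of $\om_0^n$ carried out in \cite{CGP, GP}, the singular factor $\prod_i(|s_i|^2+\ep^2)^{1-\beta_i}$ cancelling the singular part of $\det\om_0$, so that what remains has bounded ordinary derivatives, hence bounded $\om_0$-gradient.

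There remains the auxiliary function $\Psi$ needed for $(ii)$ and $(iv)$. We take $\Psi := -\sum_i(|s_i|^2+\ep^2)^{\beta_i}$ (after rescaling the hermitian metrics on $\Ox(D_i)$): it is uniformly bounded, and since the leading term of $\ddc(|s_i|^2+\ep^2)^{\beta_i}$ near $D_i$ is $\beta_i(|s_i|^2+\ep^2)^{\beta_i-1}\langle D's_i,D's_i\rangle$, which is comparable to $\om_0$, while the remaining terms are $\ge -C\om_0$, we get $\ddc(|s_i|^2+\ep^2)^{\beta_i}\le C\om_0$ and hence $\ddc\Psi \ge -C\om_0$; this is $(ii)$. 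The curvature lower bound $(iv)$ for $\om_0$ with this (or a comparable uniformly bounded) $\Psi$ is exactly the estimate established in \cite{GP} (see also \cite{CGP}) for regularized conic metrics, and by transversality it holds with a constant uniform in $(t,\ep)$. This last point — obtaining a lower bound on the bisectional curvature of the regularized conic reference metric with an auxiliary potential that stays bounded as $\ep\to 0$ — is the only genuinely delicate input and the one I expect to be the main obstacle; everything else is bookkeeping of the estimates in Proposition~\ref{prop:est}, with the recurring care that all constants remain uniform in $t$ and $\ep$.

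Granting the five hypotheses, Proposition~\ref{prop:gradient} yields $|\nabla u_{t,\ep}|_{\om_0} \le C$ uniformly in $(t,\ep)$, after the shrinking of $\DD$ already required in Proposition~\ref{prop:est}. Writing $\vp_{t,\ep} = u_{t,\ep}+\psi_t+\chi_\ep$ and using the gradient bounds on $\psi_t$ and $\chi_\ep$ obtained above, we conclude $|\nabla\vp_{t,\ep}|_{\om_0}\le C$, hence $|\nabla\vp_{t,\ep}|_{\ombte}\le C$ by the quasi-isometry $\om_0 \sim \ombte$.
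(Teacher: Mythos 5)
Your proposal is correct and follows essentially the same route as the paper: both proofs verify hypotheses $(i)$–$(v)$ of Proposition \ref{prop:gradient} for the rewritten Monge--Ampère equation relative to the regularized conic reference metric, getting $(i)$ and $(v)$ from Proposition \ref{prop:est}, $(ii)$ and $(iv)$ from the auxiliary function and curvature estimates of \cite{GP} (and \cite{CGP}), and checking the gradient bound in $(iii)$ by the local expansion of the density near $D$, exactly as you do. The only cosmetic difference is that you propose an explicit candidate for $\Psi$ where the paper simply cites \cite[\S 4]{GP}; the substance of the argument is the same.
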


\begin{proof}
We need to check that the items $(i)-(v)$ from the above proposition are satisfied in this context. Proposition \ref{prop:est} gives $(i)$ and $(v)$. We get $(ii)$ and $(iv)$ (as well as the first bound for $(iii)$) from \cite[\S 4]{GP} where the appropriate function $\Psi$ is introduced. It remains to get a bound on the gradient of of $\log\left(\om^n/\prod_i(|s_i|^2+\ep^2)^{1-\beta_i} {\om}_{\beta,t,\ep}^n\right)$. Because the function in the logarithm, say $F_{\ep}$ is uniformly bounded away from $0$, it is enough to check that its gradient is bounded (all appropriate quantities are computed with respect to the approximate conic metric). From \cite[(21)]{CGP}, we extract that $F_{\ep}$ is (up to bounded terms in the usual $\mathscr C^1$ norm) a combination of $z_i (|z_i|^2+\ep^2)^{2\beta_i}$ and $(|z_i|^2+\ep^2)^{1+\beta_i}$. 
 Therefore, all the components of $\nabla F_{\ep}$ are (up to bounded) terms dominated by $|z_i|(|z_i|^2+\ep^2)^{\beta_i}+|z_i|(|z_i|^2+\ep^2)$ which is uniformly bounded. Therefore $|\nabla F_{\ep}|_{\rm eucl}$ is uniformly bounded, and as the approximate conic metric dominates a fixed Kähler metric, we get the expected result.
\end{proof}

The gradient $L^{\infty}$ estimate also leads to an $L^2$ estimate for the non-mixed second derivatives of $\vp_{t,\ep}$:

\begin{coro}
\label{est:hess}
There exists $C>0$ independent of $\ep, t$ such that: 
$$|| D^2 \vp_{t,\ep}||_{L^2} \le C$$
Here, $D$ and $L^2$ are taken with respect to $\omte$.
\end{coro}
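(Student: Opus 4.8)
The plan is to derive the $L^2$ bound by the classical Bochner–Weitzenböck/integration-by-parts trick applied to the non-mixed holomorphic Hessian of $\vp_{t,\ep}$, exploiting the already-established $L^\infty$ gradient bound (Corollary \ref{est:grad}) together with the Laplacian estimate from Proposition \ref{prop:est}. Work on a fixed fiber $X_t$, write $\vp=\vp_{t,\ep}$, $\om'=\omte$ and $\om=\ombte$, and let $\Delta=\Delta_\om$ be the (background) Laplacian. The starting point is the pointwise identity, valid on a Kähler manifold, $\Delta|\d\vp|_\om^2 = |D^2\vp|_\om^2 + |\db\d\vp|_\om^2 + 2\mathrm{Re}\langle \d\vp, \d\Delta\vp\rangle + \mathrm{Ric}(\nabla\vp,\nabla\vp)$, where $D^2\vp$ denotes the full covariant Hessian (the $(2,0)$ plus $(1,1)$ pieces); here $|D^2\vp|^2$ already controls $\|D^2\vp\|^2$ in the sense of the statement. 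Integrating over $X_t$ against $dV_\om$, the left-hand side integrates to zero, so
$$\int_{X_t}|D^2\vp|_\om^2\, dV_\om = -\int_{X_t}\Big(|\db\d\vp|_\om^2 + 2\mathrm{Re}\langle\d\vp,\d\Delta\vp\rangle + \mathrm{Ric}_\om(\nabla\vp,\nabla\vp)\Big)\,dV_\om.$$
The term $|\db\d\vp|^2 = |\trom(\om'-\om)|^2 = |\trom\om' - n|^2$ is controlled by the Laplacian estimate of Proposition \ref{prop:est} (item 2), which gives $\trom\om' \le Cn$; the Ricci term is controlled in absolute value by $C\int|\nabla\vp|_\om^2$, bounded by Corollary \ref{est:grad}. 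The only genuinely delicate term is the middle one, $\int 2\mathrm{Re}\langle\d\vp,\d\Delta\vp\rangle\,dV_\om$, which after one more integration by parts becomes $-\int 2(\Delta\vp)^2\,dV_\om$ up to a curvature correction, or can be handled directly: since $\Delta\vp = \trom\om' - n$ is uniformly bounded by Proposition \ref{prop:est}, we get $\big|\int\langle\d\vp,\d\Delta\vp\rangle\big| = \big|\int (\Delta\vp)^2\big| \le C\,\vol(X_t,\om) \le C$. Assembling these gives $\|D^2\vp\|_{L^2(\om)}^2 \le C$ with $C$ uniform in $t,\ep$ (the volume of $X_t$ with respect to the fixed background metric $\om$ being bounded, and $\om$, $\ombte$ being comparable on the relevant region up to the weights already accounted for in the norm convention).

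One subtlety deserves care: the Hessian norm and the $L^2$ norm in the statement are taken with respect to the degenerating metric $\omte$, not the background $\om$, so I would first reconcile the two conventions. By Proposition \ref{prop:est}(2), $\omte$ and $\ombte$ are uniformly equivalent, and $\|D^2\vp\|_{L^2(\omte)}$ differs from a weighted $L^2$ norm against the fixed metric only through powers of $(|s_i|^2+\ep^2)^{\beta_i-1}$ which, when paired with the volume form $\omte^n \sim \prod(|s_i|^2+\ep^2)^{\beta_i-1}\om^n$, can be absorbed; alternatively, one runs the entire Bochner argument intrinsically with $\om'=\omte$ in place of $\om$, using that $\mathrm{Ric}\,\omte \ge -C\omte$ (Proposition \ref{prop:est}(3)) to control the curvature term and that $\Delta_{\omte}\vp$ is bounded because $\trom[\omte]{\ombte}$ is (again item 2). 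I expect the latter route to be cleaner and to be the one the author intends, since all four items of Proposition \ref{prop:est} plug in directly.

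The main obstacle is bookkeeping rather than conceptual: keeping the integration-by-parts identity honest on a manifold whose metric $\omte$ is merely quasi-isometric to a conic model, so that a priori $D^2\vp$ need not be globally $L^2$ before the estimate is proven. I would handle this by first working on the smooth approximate metric (where $\omte$ is genuinely smooth, being a solution of \eqref{eq:eps0} via Aubin–Yau) so that all integrations by parts are legitimate for each fixed $\ep>0$, and only then taking $\ep$ small; the uniformity is the whole point and comes from the fact that every constant entering the Bochner computation — the Sobolev/Poincaré constants, the Ricci lower bound, the Laplacian bound, the gradient bound — has been made uniform in $t,\ep$ in Proposition \ref{prop:est} and Corollary \ref{est:grad}. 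A secondary point to verify is that $\mathrm{diam}(X_t,\omte)$ and hence $\vol(X_t,\omte)$ are uniformly bounded, which is \eqref{diam}, so the constant-order terms in the Bochner identity contribute $O(1)$.
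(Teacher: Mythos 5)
Your proposal (in the intrinsic form you yourself single out as the cleaner route, i.e.\ running Bochner entirely with respect to $\omte$) is correct, but it is not quite the paper's argument, and the difference is worth noting. Both proofs start from the Bochner--Weitzenböck identity and integrate it over the closed fiber, using $\Ric\omte\ge -C\,\omte$ and the gradient bound of Corollary \ref{est:grad} for the curvature term. The divergence is in how the third-order term is treated: the paper computes $\la \Delta\nabla\vp,\nabla\vp\ra$ by differentiating the Monge--Ampère equation \eqref{eq:eps0}, which produces the singular term $\sum_i(1-\beta_i)\la\nabla\log(|s_i|^2+\ep^2),\nabla\vp\ra$; this is then controlled using the pointwise gradient bound together with the uniform $L^1$ bound on $\nabla\log(|s_i|^2+\ep^2)$, yielding a pointwise inequality $|D^2\vp|^2\le \Delta|\nabla\vp|^2+F$ with $\|F\|_{L^1}\le C$. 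You instead keep the term in the form $\la\nabla\vp,\nabla\Delta_{\omte}\vp\ra$, integrate by parts to reduce it to $\int(\Delta_{\omte}\vp)^2$, and observe that $\Delta_{\omte}\vp=n-\tr_{\omte}(\alpha_t+\gamma_t)$ is uniformly bounded because $\omte$ is uniformly quasi-isometric to $\ombte$, which dominates a fixed Kähler metric (Proposition \ref{prop:est}, item 2). This avoids differentiating the equation and the singular term altogether, and since the volume $\int_{X_t}\omte^n$ is cohomological, it gives the same uniform $L^2$ bound by a more elementary argument; what it does not give is the pointwise differential inequality the paper extracts along the way, though the corollary as stated (and its later use in Lemma \ref{lem:L2}) only needs the integral bound.

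Two caveats on your first, ``background metric'' version of the argument. First, a uniform lower bound on $\Ric\ombte$ is not available in the paper (and is genuinely delicate: for the regularized conic metrics only the weaker bound $i\Theta(T_X)\ge -(C\om+\ddc\Psi)\otimes\mathrm{Id}$ of \cite{CGP,GP} is established), so the Ricci term cannot be handled that way; this is precisely why the intrinsic route with $\omte$, where item 3 of Proposition \ref{prop:est} applies, is the one to use. Second, the identity you write for the mixed Hessian, $|\db\d\vp|^2=|\trom\om'-n|^2$, is false (the norm of the $(1,1)$ Hessian is not the square of its trace); fortunately you only need that this term is harmless, and indeed $\ddc\vp=\omte-\alpha_t-\gamma_t$ is pointwise bounded with respect to $\ombte$ by quasi-isometry, so the conclusion stands once this is corrected.
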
 

\begin{proof}
To lighten notation, we will use the following notations all along the proof: $\vp:=\vp_{t,\ep}$ and $\om:=\omte$. Bochner-Weitzenböck formula reads:
$$\Delta |\nabla \vp|^2 = |D^2 \vp|^2+\la \nabla \vp, \Delta \nabla \vp \ra + \Ric(\nabla \vp, \nabla \vp)$$
We know that $\Ric \om \ge -\om + \sum (1-\beta_i) \frac{\ep^2 \theta_{i,t}}{|s_i|^2+\ep^2}$ hence $\Ric \om \ge -C \om$ for some uniform $C>0$. Moreover, differentiating the equation 
$(\alpha_t+\ddc \vp)^n = \frac{e^{\vp}\om_0^n}{\prod (|s_i|^2+\ep^2)^{1-\beta_i}}$ (where $\alpha_t$ is the restriction to $X_t$ of $\alpha:= \Theta_{h^{\om_0}_{X/\DD}}(K_{X/\DD})+\sum_i(1-\beta_i) \theta_{D_i}$), we get 
$$\la\Delta \nabla \vp, \vp\ra = |\nabla \vp|^2+\Ric(\nabla \vp, \nabla \vp) - \sum (1-\beta_i)\la \nabla \log (|s_i|^2+\ep^2), \nabla \vp \ra +\la \tr_{\om_0}(D \om_0)-\tr_{\om}(D \alpha_t), \nabla \vp\ra$$
The term $\tr_{\om_0}(D \om_0)-\tr_{\om}(D \alpha_t)$ is obviously bounded, so that 
$$\Delta |\nabla \vp|^2 \ge  |D^2 \vp|^2 -C |\nabla \vp|^2-C |\nabla \vp | \,\left(1+ \sum_i |\nabla \log (|s_i|^2+\ep^2)|\right)$$
From Corollary \ref{est:grad} above, we know that $|\nabla \vp| \le C$ for some uniform $C$, and it is an easy computation to check that $||\nabla \log (|s_i|^2+\ep^2)||_{L^1} \le C$. As as result, $$|D^2 \vp|^2\le \Delta |\nabla \vp|^2+ F$$ with
$||F||_{L^1} \le C$. Therefore, $\int |D^2 \vp|^2 \le C$, which had to be proved. 
\end{proof}

\section{First order estimates in the transverse direction}

In this section, one first show that given any $\ep>0$, the potential $\vp_{t,\ep}$ varies smoothly with $t$. The main goal of the section is to derive a uniform estimate for $\frac{\d}{\d t} \vp_{t,\ep}$. For that purpose, one will consider suitable lifts of the vector field $\d/\d t$, and successively prove an $L^1$-like estimate, a $L^2$ one, and finally a $L^{\infty}$ estimate. \\

\subsection{Smoothness of the variation}

\begin{prop}
The function $\vpe$ is smooth on $X$.
\end{prop}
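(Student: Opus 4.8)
The plan is to prove smoothness of $\vpe$ on $X$ by a standard application of the implicit function theorem in appropriate Hölder spaces, exploiting the fact that for fixed $\ep>0$ the Monge--Ampère equation \eqref{eq:eps0} has \emph{smooth} non-degenerate data (unlike the conic equation \eqref{eq:0}). First I would fix a point $t_0\in\DD$ and local holomorphic coordinates $(z,t)$ on a chart $U$ with $p(z,t)=t$, and regard the family of equations \eqref{eq:eps0} as a single equation on a small polydisk, parametrized by $t$. Writing $\alpha_t:=\Tht+\sum_i(1-\beta_i)\tt+\gamma_t$ (a smooth family of Kähler-reference forms, after adding the smooth potential $\psi_t$ and the smooth $\ddc\chi_\ep$ from Proposition~\ref{prop:est} so that the reference metric is genuinely Kähler and varies smoothly in $t$), the equation for $u_{t,\ep}=\vp_{t,\ep}-\psi_t-\chi_\ep$ reads
$$(\bar\om_{\beta,t,\ep}+\ddc u_{t,\ep})^n=e^{u_{t,\ep}+\psi_t+\chi_\ep+f_{t,\ep}}\,\bar\om_{\beta,t,\ep}^n,$$
where all the data on the right and the reference forms are smooth in $(z,t)$ jointly, and the solution $u_{t,\ep}$ is the \emph{unique} bounded (hence, by Aubin--Yau and elliptic regularity, smooth on each fiber) solution.

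Next I would set up the Banach-space framework: let $\Phi:\DD\times \mathscr C^{k+2,\alpha}(X_{t_0})\to \mathscr C^{k,\alpha}(X_{t_0})$ be the nonlinear operator
$$\Phi(t,u)=\log\frac{(\bar\om_{\beta,t,\ep}+\ddc u)^n}{\bar\om_{\beta,t,\ep}^n}-u-\psi_t-\chi_\ep-f_{t,\ep},$$
where for $t\neq t_0$ one transports the fiberwise form $\bar\om_{\beta,t,\ep}$ to $X_{t_0}$ via a smooth trivialization of $p$ near $t_0$ (which exists since $p$ is a smooth proper submersion, by Ehresmann). Then $\Phi(t,u_{t,\ep})=0$, the map $\Phi$ is smooth in $(t,u)$ (all coefficients depend smoothly on $t$ and the Monge--Ampère operator is smooth in $u$ on the open set where $\bar\om_{\beta,t,\ep}+\ddc u>0$), and the partial differential at $u_{t_0,\ep}$ is
$$D_u\Phi_{(t_0,u_{t_0,\ep})}=\Delta_{\om_{t_0,\ep}}-\mathrm{Id},$$
which is an isomorphism $\mathscr C^{k+2,\alpha}\to\mathscr C^{k,\alpha}$ since $\om_{t_0,\ep}$ is a genuine smooth Kähler metric and $-\Delta+\mathrm{Id}$ has no kernel. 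The implicit function theorem then gives a $\mathscr C^1$ (indeed $\mathscr C^\infty$, by bootstrapping) map $t\mapsto \tilde u_t\in\mathscr C^{k+2,\alpha}(X_{t_0})$ solving $\Phi(t,\tilde u_t)=0$ for $t$ near $t_0$; by uniqueness of the bounded solution, $\tilde u_t$ coincides with (the transport of) $u_{t,\ep}$. This shows $u_{t,\ep}$, hence $\vp_{t,\ep}$, depends smoothly on $t$ in $\mathscr C^{k+2,\alpha}$ for every $k$; combined with fiberwise smoothness and elliptic regularity for the $t$-derivatives (differentiating the equation in $t$ gives linear elliptic equations for $\d_t^j\vpe$ with smooth coefficients), one concludes joint smoothness of $\vpe$ in $(z,t)$ on $X$.

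The only genuine point requiring care — and what I would flag as the main (minor) obstacle — is the joint-in-$(z,t)$ regularity as opposed to merely: (a) smooth in $z$ for each fixed $t$, and (b) the IFT-produced map $t\mapsto u_{t,\ep}$ being $\mathscr C^\infty$ into a fixed Hölder space. To upgrade (a)+(b) to genuine joint smoothness, I would differentiate \eqref{eq:eps0} repeatedly in $t$ (after trivializing $p$): $\d_t\vpe$ satisfies $(\Delta_{\om_{t,\ep}}-\mathrm{Id})\d_t\vpe=(\text{smooth terms involving }\vpe,\nabla\vpe)$, an elliptic equation on $X_t$ with coefficients depending smoothly on $(z,t)$ (using that $\vpe\in\mathscr C^\infty(X_t)$ uniformly near $t_0$ from the IFT estimate); Schauder theory with parameters then gives $\d_t\vpe\in\mathscr C^\infty$, and inductively all mixed derivatives $\d_z^a\d_t^b\vpe$ exist and are continuous, which is joint $\mathscr C^\infty$. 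Note that, crucially, none of this uses any control as $\ep\to 0$: here $\ep>0$ is fixed and all metrics and volume forms are smooth, so this is the ``easy'' part of the paper and the conic subtleties are entirely absent. I would keep the write-up short, essentially citing \cite[Remark 3.1]{Paun12} or the analogous IFT argument, since the substance lies in the later uniform estimates.
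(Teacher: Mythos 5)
Your proposal is correct and follows essentially the same route as the paper: both trivialize the family near a fixed fiber (the paper via varying complex structures $J_t$ on the fixed underlying manifold, you via an Ehresmann trivialization, which amounts to the same thing), apply the implicit function theorem in Hölder spaces to the Monge--Ampère operator, and use that the linearization $\Delta_{\om_{t_0,\ep}}-\mathrm{Id}$ is invertible together with uniqueness of the fiberwise solution. Your extra remarks on upgrading to joint $(z,t)$ smoothness by differentiating the equation in $t$ and bootstrapping are a harmless elaboration of what the paper leaves implicit.
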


\begin{proof}
We work near the fiber $X_0$, and we can transpose the problem to a unique differential manifold $X_0$ with varying complex structure $J_t$ for $t$ close to the origin; we denote by $\partial_t, \bar \d_t$ the associated differential operators. The form $\Tht+\sum_i(1-\beta_i)\tt$, viewed on $X_0$, will be denoted by $\Theta_t$, and we write $\widetilde \omt$ for $\om_{|X_t}$ viewed on $X_0$. Up to shrinking $\DD$ one may assume that all the Kähler forms $\Theta_t$ on $X_0$ satisfy $\Theta_t \ge \frac 1 2 \Theta_0$; finally set $\mathcal P_0:= \mathrm{PSH}(X_0, \frac 1 2 \Theta_0)$. For each $\ep>0$, we define a map
$$ \begin{array}{ccccc}
\Phi_{\ep}&:&\R\times \mathscr C^{k+2,\alpha}(X) \cap \mathcal P_0 & \longrightarrow & \mathscr C^{k,\alpha}(X) \\
&&(t,u)&\longmapsto&\log\left[\frac{(\Theta_t+i \d_t \bar \d_t u)^n}{\widetilde \omega_t^n}\right]-u-\sum_i(1-\beta) \log(|s_{i,t}|^2+\ep^2)
\end{array}$$
Its partial derivative with respect to the function variable is
$$ \frac{\d \Phi_{\ep}}{\d u}(0, \vp_{0,\ep}) = \Delta_{\omega_{\vp_{0,\ep}}}-\mathrm {Id}$$
which is invertible. Hence there exists a unique smooth path $t\mapsto \vp_{t,\ep}$ such that $\Phi_{\ep}(t, \vp_{t,\ep})=0$ near the origin. Of course, modulo the identification of $X_t$ with $X_0$, this function is nothing else but the function $\vp_{t,\ep}$ introduced above, which legitimizes using the same notation for both functions. 
\end{proof}

\subsection{Integral bound}
To get the first bound, the following (standard) formula is crucial: 

\begin{prop}
\label{fint}
Let $\om$ be any Kähler form on $X$, let $f\in C^{\infty}(X)$, and let $v$ be any vector field on $X$ lifting $\frac{\d}{\d t}$. Then:
$$\frac{\d}{\d t}\left( \int_{X_t} f \om^n\right) \Big|_{t=t_0}= \int_{X_{t_0}} (v\, \cdotp f) \, \om^n$$ 
\end{prop}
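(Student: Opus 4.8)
The plan is to reduce the statement to a purely local computation and apply Stokes' theorem, being careful about the fact that $v$ need not preserve the fibration in any special way beyond $p_*v = \partial/\partial t$. First I would work in local coordinates $(z_1,\dots,z_n,t)$ on $X$ with $p(z,t) = t$, so that $v = \partial/\partial t + \sum_\alpha a_\alpha(z,t)\,\partial/\partial z_\alpha$ for smooth functions $a_\alpha$. The function $t \mapsto \int_{X_t} f\,\om^n$ can be computed by choosing, near any point of $X_{t_0}$, a partition of unity and integrating the fiber-restricted top form; the key observation is that $\int_{X_t} f\,\om^n$ equals, up to a constant, $\int_{X} \chi \cdot f\,\om^n \wedge \delta_{t}$-type slicing, but more cleanly one writes it via the coarea/fibration formula. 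The cleanest route: pick the flow $\Phi_\tau$ of $v$ (defined for small time near $X_{t_0}$ since $v$ is smooth and $p_* v = \partial/\partial t$), which gives diffeomorphisms $\Phi_\tau : X_{t_0} \to X_{t_0+\tau}$.

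Then I would write
\[
\int_{X_{t_0+\tau}} f\,\om^n = \int_{X_{t_0}} \Phi_\tau^*(f\,\om^n),
\]
differentiate under the integral sign at $\tau = 0$, and use Cartan's formula together with $\mathcal{L}_v(f\,\om^n) = (v\cdot f)\,\om^n + f\,\mathcal{L}_v(\om^n)$ and $\mathcal{L}_v(\om^n) = d(\iota_v \om^n) + \iota_v(d\,\om^n) = d(\iota_v \om^n)$ since $\om$ is closed. Restricting to the fiber $X_{t_0}$, the term $\int_{X_{t_0}} d(f\,\iota_v \om^n)$ vanishes by Stokes (the fiber is compact without boundary), and I must also check that the remaining piece of $\int_{X_{t_0}} f\, d(\iota_v \om^n)$ combines correctly: writing $f\,d(\iota_v\om^n) = d(f\,\iota_v\om^n) - df \wedge \iota_v\om^n$ and noting $df \wedge \iota_v \om^n|_{X_{t_0}}$ — here one uses that on the fiber $\om^n$ restricts to a volume form of top degree, so $\iota_v\om^n$ restricted to the fiber involves only the "extra" $d\bar t$ or $dt$ components, which pair with $df$ to reproduce exactly $(v\cdot f - \tfrac{\partial}{\partial t} f)$ contributions; tracking the bidegrees carefully shows everything collapses to $\int_{X_{t_0}} (v\cdot f)\,\om^n$. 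Alternatively, and perhaps most transparently, one reduces to the case $v = \partial/\partial t$ (the "obvious" lift) by noting that any two lifts differ by a vector field $w$ tangent to the fibers, and $\int_{X_{t_0}} (w\cdot f)\,\om^n = \int_{X_{t_0}} \mathcal{L}_w(f\,\om^n) = \int_{X_{t_0}} d(\iota_w(f\,\om^n)) = 0$ by Stokes on the closed fiber, so the answer is independent of the choice of lift; for $v = \partial/\partial t$ it is just differentiation under the integral sign in Fubini form.

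The main obstacle I anticipate is bookkeeping the bidegree/restriction subtleties: $\om^n$ is a form on the total space $X$ (dimension $n+1$), not on the fiber, so "$\int_{X_t} f\,\om^n$" implicitly means integrating the pullback to the fiber, and one must be scrupulous that $\iota_v \om^n$ and its exterior derivative, when restricted to a fiber, behave as claimed — in particular that the $dt \wedge d\bar t$-free part is what matters and that no boundary terms survive. This is entirely routine but is the only place an error could creep in. I would therefore present the fiber-independence-of-lift argument first (it isolates the Stokes step cleanly), then finish with the trivial $v = \partial/\partial t$ computation. No serious analytic input is needed; the whole proposition is a differential-geometric identity.
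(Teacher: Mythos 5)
You have put your finger on exactly the right spot --- the term $df\wedge\iota_v\om^n$ arising from $f\,d(\iota_v\om^n)=d(f\,\iota_v\om^n)-df\wedge\iota_v\om^n$ --- but neither of your two ways of disposing of it works. The pullback of $\iota_v\om^n$ to the fiber is not ``only the $dt$, $d\bar t$ components'': it is a genuine $(2n-1)$-form on $X_{t_0}$, of type $(n-1,n)$, and it vanishes precisely when $v$ is $\om$-orthogonal to the fiber. What the flow computation actually yields is
\[
\frac{\partial}{\partial t}\Big(\int_{X_t}f\,\om^n\Big)\Big|_{t=t_0}=\int_{X_{t_0}}\Bigl[(v\cdot f)\,\om^n-\partial f\wedge\iota_v\om^n\Bigr]_{|X_{t_0}},
\]
where only the $\bar\partial f\wedge\iota_v\om^n$ piece dies on the fiber for type reasons; the $\partial f$ piece survives for a general lift, so things do \emph{not} collapse to $\int (v\cdot f)\,\om^n$. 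Your ``cleanest route'' fails for the same reason: for a vertical $(1,0)$ field $w$ one has $(w\cdot f)\,\om^n=\mathcal L_w(f\om^n)-f\,\mathcal L_w(\om^n)$, and while $\int_{X_{t_0}}\mathcal L_w(f\om^n)_{|X_{t_0}}=0$ by Stokes, the term $\int_{X_{t_0}}f\,d(\iota_w\om^n)_{|X_{t_0}}$ is the integral of $f$ against the divergence of $w$ and is nonzero in general; so the right-hand side genuinely depends on the choice of lift, whereas the left-hand side does not. Concretely, take $X=T\times\DD$ with $T$ an elliptic curve, $\om$ the flat product form, $f=f(z)$ pulled back from $T$, and $v=\partial/\partial t+a(z)\,\partial/\partial z$ with $a=\overline{\partial f/\partial z}$: the left-hand side is $0$, while $\int_{X_t}(v\cdot f)\,\om=\int_T|\partial f/\partial z|^2>0$.

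So the proof as proposed does not close, and no bidegree bookkeeping will make the correction term vanish for an arbitrary lift: one must either assume that $\iota_v\om^n$ pulls back to zero on the fibers (for instance $v$ the canonical lift of $\partial/\partial t$ with respect to the \emph{same} form $\om$, as in \S\ref{diff}), or keep the extra term $-\int_{X_{t_0}}\partial f\wedge\iota_v\om^n$. For comparison, the paper's own proof is exactly your first computation (flow of $v$, Cartan's formula, closedness of $\om$, Stokes) and is terse at precisely this point: it writes $L_v(f\om^n)=(v\cdot f)\,\om^n+n\,L_v\om\wedge\om^{n-1}$, with the factor $f$ in front of the second term dropped before Stokes is applied; reinstating it resurrects the $df\wedge\iota_v\om\wedge\om^{n-1}$ term you identified. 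Your instinct that this was ``the only place an error could creep in'' was exactly right, but the statement for \emph{any} lift cannot be proved as written; in the later applications, where $v=\vep$ is canonical for $\om_{\beta',t,\ep}$ rather than for $\om$, the extra term must either be shown to vanish or be estimated rather than waved away.
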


\begin{proof}
Let $(\Phi_s)$ be the flow of $v$, so that $\Phi_s(X_0)=X_s$. Then $\int_{X_t} f \om^n=\int_{X_0}\Phi_t^*(f \om^n)$ so that $\frac{\d}{\d t} \int_{X_t} f \om^n=\int_{X_0}L_v(f \om^n)$ where $L_v$ is the Lie derivative of $v$. As $\om$ is closed, $L_v \om = di_v\om$, hence $L_v(f \om^n)=(v\cdotp f)\, \om^n+n L_v \om \wedge \om^{n-1}= (v\cdotp f)\, \om^n+d\left( n\, i_v \om \wedge \om^{n-1}\right)$ and the result follows from Stoke's formula.
\end{proof}

Let $\vep$ be the lift of $\frac{\d}{\d t}$ with respect to $\om_{\beta', t, \ep}$ for some $\beta'=(\beta'_1, \ldots, \beta'_N)$ with $\beta'_i \leq \min \{\beta_i, 1/2\}$. Locally, if one chooses coordinates $(z_1, \ldots, z_n, t)$ in $U$ such that $p(z_1, \ldots, z_n, t)=t$, then 
$$\vep = \frac{\d}{\d t}- \sum_{i,j}g^{\bar j i}g_{t \bar j} \frac{\d}{\d z_i}$$
if $\om_{\beta', t, \ep}= g_{t\bar t}\, idt \wedge d\bar t+i\sum_{k=1}^n(g_{k \bar t}dz_k \wedge d\bar t +g_{t \bar k}dz_t \wedge d \bar z_k  ) + i\sum_{k,l=1}^n g_{k \bar l}dz_k \wedge d \bar z_{l}$.
Recall from \cite[\S 4]{CGP} that one can choose the coordinates such that at the center $p_0\in X_0$ of the coordinate chart, the weights of the hermitian metrics on $D_i$ as well as their first derivatives vanish, so that we have
$$ g_{i\bar j } = \begin{cases}
O(|z_i|^2+\ep^2)^{\beta'_i-1}) & \mbox{if } i=j \in \{1, \ldots, r\} \\
O(1) & \mbox{else}
\end{cases}$$
and 
\begin{equation}
\label{inv}
 g^{i\bar j } = \begin{cases}
O(|z_i|^2+\ep^2)^{1-\beta'_i}) & \mbox{if } i=j \in \{1, \ldots, r\} \\
O\left(|z_i|^2+\ep^2)^{1-\beta'_i}(|z_j|^2+\ep^2)^{1-\beta'_j}\right) & \mbox{if } i,j \in \{1, \ldots, r\} \mbox{ and } i\neq j \\
O(1) & \mbox{else}
\end{cases}
\end{equation}
In particular, we have at $p_0$:
$$v_{\ep}=  \sum_{i=1}^r O\left(|z_i|^2+\ep^2)^{1-\beta'_i}\right) \,\cdotp \frac{\d}{\d z_i}+\sum_{i=r+1}^{n+1} O(1) \, \cdotp \frac{\d}{\d z_i}$$
At $p_0$, the following holds:  
$$\frac{\d}{\d z_k} \log (|s_i|^2+\ep^2) = 
\begin{cases} 
\frac{\bar z_i}{|z_i|^2+\ep^2} & \mbox{if } k=i \\
0 & \mbox{else}
\end{cases}$$
so that eventually, 
$(v_{\ep}\cdotp \log (|s_i|^2+\ep^2))(p_0)= O(|z_i|^2+\ep^2)^{\frac 12-\beta'_i}$
hence there exists a constant $C$ independent of $\ep$ such that:
\begin{equation}
\label{vep}
\left| v_{\ep}\cdotp \log (|s_i|^2+\ep^2) \right| \le C
\end{equation}
on $X_0$ (and actually this would hold uniformly on $X_t$ for a small $t$). If we piece these observations together, we get:

\begin{prop}
\label{L1}
There exists a constant $C>0$ such that 
$$\left| \int_{X_t} (\vep \cdotp \vp_{t,\ep}) \, \omte^n \right| \le C$$
\end{prop}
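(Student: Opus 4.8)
The plan is to integrate the Monge–Ampère equation \eqref{eq:eps0} against $\omte^n$ over the fiber $X_t$ and differentiate in $t$ using Proposition \ref{fint} with the lift $\vep$ of $\d/\d t$. Since $\int_{X_t}\omte^n$ is the constant $c_1(K_{X_t}+\sum(1-\beta_i)D_{i,t}+\{\gamma\}_{|X_t})^n$ (up to the usual normalizations, a cohomological quantity which is continuous in $t$, hence locally constant — or one may just observe its $t$-derivative is bounded), the left-hand side contributes a bounded term. On the right-hand side one has $\int_{X_t} e^{\vp_{t,\ep}}\om^n / \prod_i(|s_i|^2+\ep^2)^{1-\beta_i}$; applying $\vep$ to the integrand produces a main term $\int_{X_t}(\vep\cdot\vp_{t,\ep})\,e^{\vp_{t,\ep}}\om^n/\prod_i(|s_i|^2+\ep^2)^{1-\beta_i}$ plus correction terms coming from $\vep$ acting on $\om^n$ (bounded, by Proposition \ref{fint}'s proof — it is $\mathrm{div}$-type, or simply $\vep\cdot\log(\om^n/\text{something smooth})$ is controlled since $\vep$ has the bound from \eqref{vep}-type estimates) and from $\vep$ acting on $\log\prod_i(|s_i|^2+\ep^2)^{1-\beta_i}$, which by \eqref{vep} is uniformly bounded by $C$. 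Thus
\[
\int_{X_t}(\vep\cdot\vp_{t,\ep})\,\frac{e^{\vp_{t,\ep}}\om^n}{\prod_i(|s_i|^2+\ep^2)^{1-\beta_i}}
\]
is uniformly bounded.

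Next I would convert this weighted integral into the unweighted one $\int_{X_t}(\vep\cdot\vp_{t,\ep})\,\omte^n$. By the equation \eqref{eq:eps0} these two measures are literally equal: $e^{\vp_{t,\ep}}\om^n/\prod_i(|s_i|^2+\ep^2)^{1-\beta_i}=\omte^n$. So in fact the computation above directly gives the bound on $\int_{X_t}(\vep\cdot\vp_{t,\ep})\,\omte^n$ with no further work — the weighted volume form \emph{is} $\omte^n$. The only point requiring care is the justification that one may differentiate under the integral sign and apply Proposition \ref{fint} even though $\vep$ has singularities along $D$: here one uses that $\vep$ has the controlled blow-up recorded above (the components along the normal directions $\d/\d z_i$ are $O((|z_i|^2+\ep^2)^{1-\beta'_i})$, hence bounded, for fixed $\ep$), that $\vp_{t,\ep}$ is smooth in $(z,t)$ for fixed $\ep$ by the previous proposition, and that the whole setup is on a compact fiber, so Proposition \ref{fint} applies verbatim to the smooth data for each fixed $\ep$; the content is that the resulting bound is \emph{uniform} in $\ep$ and $t$, which is exactly what \eqref{vep} and the $L^\infty$ bound of Proposition \ref{prop:est} provide.

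I expect the main obstacle to be bookkeeping the correction terms produced when $\vep$ hits the reference volume form $\om^n$ and the weight $\prod_i(|s_i|^2+\ep^2)^{1-\beta_i}$: one must verify each is $L^1$-bounded uniformly in $\ep,t$. For the weight, \eqref{vep} gives $|\vep\cdot\log(|s_i|^2+\ep^2)|\le C$ pointwise, so its integral against $\omte^n$ (a probability-type measure of bounded mass) is bounded. For $\om^n$, writing $\om^n$ in the local coordinates $(z,t)$ and noting $\vep\cdot\om^n$ involves the bounded components of $\vep$ against smooth coefficients, one gets a bounded contribution; alternatively one absorbs $\om^n$ into a smooth closed form and invokes the Stokes-type identity of Proposition \ref{fint} directly. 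Once these are seen to be harmless, the estimate follows immediately, as the equality $\omte^n = e^{\vp_{t,\ep}}\om^n/\prod_i(|s_i|^2+\ep^2)^{1-\beta_i}$ removes any analytic difficulty: this is really a ``differentiate the total mass of the Monge–Ampère measure'' argument, and the total mass is cohomological.
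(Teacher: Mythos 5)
Your proposal is correct and follows essentially the same route as the paper: differentiate the constant total Monge--Ampère mass $\int_{X_t}\omte^n=\int_{X_t}e^{\vp_{t,\ep}}\om^n/\prod_i(|s_i|^2+\ep^2)^{1-\beta_i}$ in $t$ via Proposition \ref{fint} with the lift $\vep$, note that the action of $\vep$ on $\om^n$ is absorbed by the Stokes argument in that proposition, and bound the remaining term $\sum_i(1-\beta_i)\int_{X_t}(\vep\cdot\log(|s_i|^2+\ep^2))\,\omte^n$ uniformly using \eqref{vep}. This is exactly the paper's proof of Proposition \ref{L1}.
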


\begin{proof}
Recall the Monge-Ampère satisfied by $\om_{t,\ep}$ on $X_t$: $\omte^n= \frac{e^{\vp_{t,\ep}}}{\prod (|s_i|^2+\ep^2)^{1-\beta_i}}\om^n$. 
As $\omte$ lives in the cohomology class of $K_{X_t}+(1-\beta) D_t$, its volume is constant, hence $\frac{\d}{\d t} \int_{X_t}\frac{e^{\vp_{t,\ep}}}{\prod (|s_i|^2+\ep^2)^{1-\beta_i}}\om^n=0$. By  Proposition \ref{fint}, we get:
\begin{equation}
\label{div}
\int_{X_t} (\vep \cdotp \vp_{t,\ep}) \, \omte^n =   \sum_i (1-\beta_i) \int_{X_t} (\vep \cdotp \log(|s_i|^2+\ep^2)) \, \omte^n
\end{equation}
So we are left to showing that the right hand side is uniformly bounded, but this is a consequence of \eqref{vep}.
\end{proof}

\subsection{$L^2$ bounds}

We are interested in estimating $\frac{\d}{\d t}\vp_{t,\ep}$ (which is only locally defined); although this function satisfies a very simple equation (essentially $\Delta-\mathrm{Id}$ of this function is uniformly bounded), local methods don't seem to easily provide a bound for it. Instead, we work globally on $X_t$, and estimate $v_{\ep} \cdotp \vp_{t, \ep}$, where $v_{\ep}$ is the vector field introduced in the previous section. 
We are gaining compactness (so no boundary terms in the integrations by parts), but it involves differentiating with respect to the "conic directions" which creates singular terms. The goal of this section is to prove:

\begin{prop}
\label{C0}
There exists a constant $C>0$ independent of $\ep,t$ (chosen small enough) such that
$$\left| \!\left| v_{\ep} \cdotp \vp_{t, \ep}\right| \! \right|_{L^{2}(X_t)} \le C$$ 
\end{prop}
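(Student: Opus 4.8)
The plan is to upgrade the integral (``$L^1$-type'') bound of Proposition~\ref{L1} to a genuine $L^2$ bound by differentiating the Monge--Ampère equation along $\vep$ and pairing with $\vep\cdotp\vp_{t,\ep}$ itself. Write $\vp:=\vp_{t,\ep}$, $\om:=\omte$, and $u:=\vep\cdotp\vp$. Applying the Lie derivative $\mathscr L_{\vep}$ to the relation $\om^n=e^{\vp}\om^n_{\rm eucl}/\prod(|s_i|^2+\ep^2)^{1-\beta_i}$ (or equivalently $(\alpha_t+\ddc\vp)^n=e^{\vp}\om^n/\prod(|s_i|^2+\ep^2)^{1-\beta_i}$ with $\alpha_t$ as in Corollary~\ref{est:hess}), one obtains a linear equation for $u$ of the schematic form
\begin{equation*}
\Delta_{\om} u - u = -\sum_i(1-\beta_i)\,\vep\cdotp\log(|s_i|^2+\ep^2) + \tr_{\om}\!\big(\mathscr L_{\vep}\alpha_t\big) - \tr_{\om_{\rm ref}}\!\big(\mathscr L_{\vep}\om_{\rm ref}\big) + (\text{correction terms}),
\end{equation*}
where the correction terms arise because $\vep$ is \emph{not} holomorphic in the fiber directions: they involve $\db_t\vep$ contracted against $\ddc\vp$, hence against $\om$, and against $\nabla\vp$. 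The key point is that, thanks to \eqref{vep} and \eqref{inv}, every one of these terms is controlled either in $L^{\infty}$ by a uniform constant, or in $L^2(X_t,\om)$ using the non-mixed Hessian estimate of Corollary~\ref{est:hess} and the gradient estimate of Corollary~\ref{est:grad} (together with the fact that the singularities of $\vep$ along $D$ are mild, of order $(|z_i|^2+\ep^2)^{1-\beta_i'}$, which precisely compensates the blow-up of the conic metric coefficients).

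Granting this, the proof is a standard $L^2$ energy/testing argument: multiply the equation $\Delta_{\om}u-u=h$ (with $h$ the right-hand side above) by $u$ and integrate over $X_t$ against $\om^n$. Integration by parts — legitimate here because we work on the \emph{smooth} approximate metric $\om=\omte$ on the compact fiber $X_t$, so there are no boundary terms — gives
\begin{equation*}
\int_{X_t}|\nabla u|^2_{\om}\,\om^n + \int_{X_t}u^2\,\om^n = -\int_{X_t}h\,u\,\om^n.
\end{equation*}
Split $h=h_\infty+h_2$ with $\|h_\infty\|_{L^\infty}\le C$ and $\|h_2\|_{L^2}\le C$ (the $h_2$-part absorbing the terms estimated via Corollaries~\ref{est:grad} and~\ref{est:hess}). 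For $h_\infty$ use Cauchy--Schwarz plus the uniform volume bound of $X_t$ and the $L^1$ bound of Proposition~\ref{L1}, or simply $\int u^2 \le \varepsilon^{-1}$-type Young's inequality; for $h_2$ use Cauchy--Schwarz directly: $|\int h_2 u|\le \|h_2\|_{L^2}\|u\|_{L^2}$. Absorbing $\tfrac12\|u\|_{L^2}^2$ into the left-hand side yields $\|u\|_{L^2(X_t)}^2\le C$, i.e.\ the claimed $\|\vep\cdotp\vp_{t,\ep}\|_{L^2(X_t)}\le C$, uniformly in $\ep$ and in $t$ small.

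The main obstacle is not the $L^2$ testing step, which is routine, but establishing that the right-hand side $h$ genuinely decomposes into an $L^\infty$ part and an $L^2$ part with uniform bounds. Concretely, one must: (a) carry out the differentiation of the Monge--Ampère equation along the non-holomorphic lift $\vep$ carefully, keeping track of which terms involve $\db_t\vep$ versus $\nabla_{\rm fiber}\vep$; (b) check that each singular contribution near $D$ — in particular $\vep\cdotp\log(|s_i|^2+\ep^2)$ and the contractions $\tr_{\om}(\cdots)$ of conic quantities — is tamed by the precise orders recorded in \eqref{inv} and \eqref{vep}, uniformly as $\ep\to0$; and (c) verify the few mixed terms that are only in $L^2$ (not $L^\infty$) are exactly those handled by Corollary~\ref{est:hess}. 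Once this bookkeeping is done, there is a real subtlety in that $\int|\nabla u|^2$ appearing on the left is a bonus we may or may not need later, but does not obstruct the present bound. I expect the curvature/singular-term accounting of step~(b) to be where almost all the work lies.
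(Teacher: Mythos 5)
Your skeleton coincides with the paper's: differentiate the Monge--Ampère equation along $\vep$ to obtain the linear equation \eqref{eq:lin0} for $\vep\cdotp\vp_{t,\ep}$, split the right-hand side into a part bounded in $L^\infty$ and a part bounded in $L^2$ (the paper's $R_1$ and $R_2$ in Lemma \ref{lem:L2}), and conclude by testing against $u$. Your energy step is fine and in fact marginally more direct than the paper's: since the sign of $\int u^2$ is favorable, Cauchy--Schwarz gives $\|u\|_{L^2}\le \|h\|_{L^2}$ at once, whereas the paper drops $\int u^2$, keeps $\|\nabla u\|_{L^2}^2\le\|R\|_{L^2}\|u\|_{L^2}$ (recorded as \eqref{ineq1}, reused later in Proposition \ref{prop:L2}), and closes via the Poincaré inequality together with the integral bound of Proposition \ref{L1}.

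The genuine gap is that the central claim of your proof --- that the non-holomorphicity correction (the paper's $R_2=\tr_{\omte}(\vep\cdotp\ddc\vp_{t,\ep})-\tr_{\omte}(\ddc(\vep\cdotp\vp_{t,\ep}))$) admits a uniform $L^2$ bound --- is asserted, not proved, and it is not true for an arbitrary admissible choice of the auxiliary angles $\beta'$. In the paper this is Lemma \ref{lem:L2}$(ii)$, which is the bulk of the argument: writing $R_2=\sum_k \Delta v_k\,\d_k\vp+\sum\om^{i\bar j}\d_i v_k\,\d_{\bar j k}\vp+\sum\om^{i\bar j}\d_{\bar j}v_k\,\d_{ik}\vp$, one must estimate second derivatives of the inverse metric coefficients of $\om_{\beta',t,\ep}$ (via \eqref{deriv}, \eqref{inv}, \eqref{vder}) and combine them with Corollaries \ref{est:grad} and \ref{est:hess}. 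The terms are borderline: $\Delta v_k\,\d_k\vp$ is square-integrable against the conic volume only if $\beta'_k<1/2$, and $\om^{i\bar j}\d_{\bar j}v_k\,\d_{ik}\vp$ is handled through the Hessian bound of Corollary \ref{est:hess} only if $2\beta'_j\ge\beta_j$. So the proof forces the further tuning $\beta'_j\in[\beta_j/2,1/2)$, which is strictly stronger than the standing requirement $\beta'_j\le\min\{\beta_j,1/2\}$ under which $\vep$ was introduced; with $\beta'_j$ chosen too small the singularities of $\nabla\vep$ do \emph{not} compensate the conic blow-up and the decomposition $h=h_\infty+h_2$ you posit fails. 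Your statement that the order $(|z_i|^2+\ep^2)^{1-\beta'_i}$ of $\vep$ ``precisely compensates'' the metric degeneration is exactly what has to be established term by term, and it is where the proposition's content lies; deferring it to ``bookkeeping'' leaves the proof incomplete.
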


\vspace{3mm}

\noindent
The strategy of the proof is simple: differentiate the Monge-Ampère equation satisfied by $\vp_{t,\ep}$ with respect to $v_{\ep}$ to obtain an elliptic linear equation satisfied by $v_{\ep} \cdotp \vp_{t,\ep}$ and apply the standard arguments. The difficulty will actually consist of analyzing precisely the coefficients of the linear equation. Let us give some more details now. 

\noindent
Remember that $\vp_{t,\ep}$ solves the equation
$$(\alpha_t+\ddc \vp_{t,\ep})^n = \frac{e^{\vp_{t,\ep}}\om^n}{\prod(|s_i|^2+\ep^2)^{1-\beta_i}}$$
where $\alpha_t= \Tht+\sum(1-\beta_i)\tt+\gamma_t$ (it is the restriction to $X_t$ of the obvious (smooth) form $\alpha$ on $X$).
Differentiating this equation, we get:
$$\tr_{\omte}(\vep \cdotp(\alpha_t+\ddc \vp_{t,\ep})) = \vep \cdotp\vp_{t,\ep}+\tr_{\om}(\vep \cdotp \om)- \sum_{i=1}^N (1-\beta_i) \vep \cdotp \log(|s_i|^2+\ep^2)$$
We would like to consider this equation as an elliptic PDE satisfied by $\vep \cdotp \vp_{t,\ep}$; however, as $\vep$ is not holomorphic, it does not commute with the Laplace operator. More precisely, we have
\begin{equation}
\label{eq:lin0}
\left(\Delta_{\omte}-1\right) (\vep \cdotp \vp_{t,\ep})=R_1+R_2
\end{equation}
where $R_1= \tr_{\om}(\vep \cdotp \om)-\tr_{\omte}(\vep \cdotp \alpha_t)- \sum_{i=1}^N (1-\beta_i)\, \vep \cdotp \log(|s_i|^2+\ep^2)$ and $R_2= \tr_{\omte}(\vep \cdotp \ddc \vp_{t,\ep}) - \tr_{\omte}(\ddc(\vep \cdotp \vp_{t,\ep})) $. Here,  $R_2$ measures the non holomorphicity of $\vep$ in the fiber directions. We claim that if the angles $\beta'_j$ are suitably chosen, the right hand side of \eqref{eq:lin0} is admits a uniform $L^2$ bound, which we prove in the following two steps.\\

\begin{lemm}
\label{lem:L2}
There exists a uniform constant $C>0$ such that
\begin{enumerate}
\item[$(i)$] $||R_1||_{L^{\infty}} \le C$ ;
\item[$(ii)$] $||R_2||_{L^{2}} \le C$.
\end{enumerate}
\end{lemm}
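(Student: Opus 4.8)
\textbf{Proof plan for Lemma \ref{lem:L2}.}

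The plan is to analyze $R_1$ and $R_2$ separately, exploiting the structure of $v_\ep$ recorded in \eqref{inv}--\eqref{vep} together with the uniform estimates of Proposition \ref{prop:est} and Corollaries \ref{est:grad}--\ref{est:hess}. For part $(i)$, I would treat the three summands of $R_1$ in turn. The last one, $\sum_i(1-\beta_i)\, \vep\cdotp\log(|s_i|^2+\ep^2)$, is bounded in $L^\infty$ directly by \eqref{vep}. For $\tr_\om(\vep\cdotp\om)$ and $\tr_{\omte}(\vep\cdotp\alpha_t)$, the point is that $\om$ and $\alpha_t = \Tht+\sum_i(1-\beta_i)\tt+\gamma_t$ are \emph{smooth} $(1,1)$-forms on $X$, so $\vep\cdotp\om$ and $\vep\cdotp\alpha_t$ are, in bounded local coordinates, obtained by applying $\vep$ to smooth coefficient functions; by the expansion $v_\ep=\sum_{i=1}^r O((|z_i|^2+\ep^2)^{1-\beta'_i})\,\d/\d z_i+\sum_{i>r}O(1)\,\d/\d z_i$ from the previous subsection, the resulting coefficients are $O(1)$ uniformly in $t,\ep$. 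Taking the trace $\tr_{\omte}$ then costs a factor $\tr_{\omte}\om \le \tr_{\ombte}\om\cdot C$ by Proposition \ref{prop:est}(2), which is $L^\infty$-bounded against $\alpha_t$ precisely because the singular (conic) directions of $\omte$ are matched, up to uniform constants, by $\ombte$ and absorb the vanishing factors $(|z_i|^2+\ep^2)^{1-\beta'_i}$ in $v_\ep$; one checks entry by entry using \eqref{inv} that no negative power of $(|z_i|^2+\ep^2)$ survives, even off the diagonal. This gives $(i)$.

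For part $(ii)$, the term $R_2 = \tr_{\omte}(\vep\cdotp\ddc\vp_{t,\ep}) - \tr_{\omte}(\ddc(\vep\cdotp\vp_{t,\ep}))$ measures the failure of $\vep$ to be holomorphic along the fibers. Writing everything in local coordinates $(z,t)$ as above and using $\vep = \d/\d t - \sum_{i,j} g^{\bar j i}g_{t\bar j}\,\d/\d z_i =: \d/\d t - \sum_i a_i\,\d/\d z_i$ with $a_i = O((|z_i|^2+\ep^2)^{1-\beta'_i})$ in the conic directions and $O(1)$ otherwise, a direct commutator computation shows that $R_2$ is a finite sum of terms of the shape $(\d_{\bar k} a_i)\,\d_i\d_{\bar k}\vp_{t,\ep}$ and $(\d_i a_j)\,\d_{\bar\ell}\d_k\vp_{t,\ep}$ type expressions (the mixed second derivatives of $\vp_{t,\ep}$ contracted with first derivatives of the coefficients of $\vep$), plus lower-order terms controlled as in $(i)$. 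The $L^2$ bound then follows from Cauchy--Schwarz: the factors $D^2\vp_{t,\ep}$ are square-integrable uniformly in $t,\ep$ by Corollary \ref{est:hess}, so it suffices to show that the derivatives of the coefficients $a_i$ of $\vep$, \emph{after taking the trace with $\omte^{-1}$}, are uniformly bounded in $L^\infty$. This again reduces, via \eqref{inv} and Proposition \ref{prop:est}(2), to a bookkeeping check that differentiating $a_i = g^{\bar j i}g_{t\bar j}$ and then contracting against the inverse metric produces only non-negative powers of $(|z_i|^2+\ep^2)$; here one uses that the reference metric $\ombte$ has uniformly bounded (in the quasi-isometry sense) derivatives of its inverse in the appropriate weighted sense, which is part of the construction of \cite{CGP, GP}.

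The main obstacle is the second part: keeping track, entry by entry, of the powers of $(|z_i|^2+\ep^2)$ that appear when one differentiates the components $a_i$ of $v_\ep$ (which themselves involve the inverse metric $g^{\bar j i}$) and then contracts the resulting expression with $\omte^{-1} \sim \ombte^{-1}$. One must verify that the negative powers coming from $\omte^{-1}$ in $\tr_{\omte}$ and from $g^{\bar j i}$ in $a_i$ are always exactly cancelled (up to uniform constants) by positive powers from $g_{t\bar j}$, $g_{k\bar l}$ and their derivatives, so that nothing blows up as $\ep \to 0$ or as one approaches $D$. The choice $\beta'_i \le \min\{\beta_i, 1/2\}$ is what makes the critical exponents work out: it guarantees the extra half-power of $(|z_i|^2+\ep^2)$ needed to pass from the $L^1$-integrability of $|\nabla\log(|s_i|^2+\ep^2)|$ and from \eqref{vep} to genuine $L^\infty$ or $L^2$ control. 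Once the bookkeeping is organized — say by splitting indices into the conic block $\{1,\dots,r\}$ and the smooth block, and treating diagonal and off-diagonal metric entries as in \eqref{inv} — the estimates are routine, but the combinatorics of the cases is where the real work lies.
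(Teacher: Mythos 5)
Your treatment of $(i)$ is essentially the paper's argument and is fine: $\vep\cdotp\om$ and $\vep\cdotp\alpha_t$ have uniformly bounded coefficients by \eqref{inv}, their $\omte$-traces are bounded because $\omte$ dominates a fixed multiple of $\om$, and the logarithmic terms are handled by \eqref{vep}. For $(ii)$, however, there are genuine gaps. Writing $\vep=\d/\d t-\sum_k v_k\d_k$, the actual expansion is $R_2=\sum_k \Delta_{\omte} v_k\,\d_k\vp+\sum_{i,j,k}\om^{i\bar j}\d_i v_k\,\d_{\bar jk}\vp+\sum_{i,j,k}\om^{i\bar j}\d_{\bar j}v_k\,\d_{ik}\vp$. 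The first term, which involves \emph{second} derivatives of the coefficients $v_k$ and hence second derivatives of the inverse approximate conic metric $(h^{\bar lk})_{i\bar j}$, is not a ``lower-order term controlled as in $(i)$'': it is the most singular piece, its estimate requires the expansions of $h_{\alpha\bar\beta,i\bar j}$ from \cite{CGP} together with the gradient estimate $|\d_k\vp|^2=O((|z_k|^2+\ep^2)^{\beta_k-1})$, and it is exactly where the restriction $\beta'_i<1/2$ is used to make the exponents integrable. Your plan never addresses it.

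Second, your Cauchy--Schwarz step leans on Corollary \ref{est:hess} to absorb ``the mixed second derivatives of $\vp_{t,\ep}$'', but that corollary only gives an $L^2$ bound for the non-mixed derivatives $\d_{ik}\vp$; the mixed derivatives $\d_{\bar jk}\vp$ are of the size of the conic metric itself and are not uniformly square-integrable in the coordinate sense. The paper eliminates them differently, substituting $\d_{\bar jk}\vp=\om_{\bar jk}-(\alpha_t)_{\bar jk}$, which turns that piece into $\d_k v_k-\sum_{i,j}\om^{i\bar j}\d_i v_k\,(\alpha_t)_{\bar jk}$ and reduces it to the first-derivative estimates \eqref{vder}. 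Even for the genuinely non-mixed term $\sum\om^{i\bar j}\d_{\bar j}v_k\,\d_{ik}\vp$, the weighted pointwise bound needed on $\om^{i\bar j}\d_{\bar j}v_k$ (relative to the $\omte$-norm in which Corollary \ref{est:hess} is stated) is uniform only when $2\beta'_j\ge\beta_j$; combined with $\beta'_j<1/2$ this forces $\beta'_j\in[\beta_j/2,1/2)$, a condition that your appeal to ``$\beta'_i\le\min\{\beta_i,1/2\}$'' does not supply (for instance $\beta_j=4/5$, $\beta'_j=1/4$ satisfies your condition but violates $2\beta'_j\ge\beta_j$). So the bookkeeping you defer is not routine: it is what determines the admissible angles $\beta'$ and whether the estimate closes at all.
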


\begin{proof}
The first part is easy at this point. Indeed, $\vep \cdotp \om$ and $\vep \cdotp \alpha_t$ have uniformly bounded coefficients, so their trace with respect to $\om$ or $\omte$ are uniformly bounded. The remaining terms to bound are $ \vep \cdotp \log(|s_i|^2+\ep^2)$, but we analyzed them already, cf \eqref{vep}.\\

The second item is more involved. To lighten notation, we will set $\vp:=\vp_{t,\ep}$ and $\om:=\omte$ all along the proof of the lemma. In the usual chosen coordinates, we write $\vep=\frac{\d}{\d t}- \sum_{k=1}^n v_k \d_k$, with $v_k = \sum_l h^{\bar l k} h_{t \bar l}$ for $h$ the approximate conic metric with cone angles $\beta_i'$ along $D_i$. An elementary computation shows that:
$$R_2=\sum_k \Delta v_k \cdotp \d_k \vp+\sum_{i,j,k}\om^{i\bar j} \d_i v_k \cdotp \d_{\bar j k }\vp+\sum_{i,j,k}\om^{i\bar j} \d_{\bar j} v_k \cdotp \d_{i k }\vp$$

Let us finally recall the following estimates (holding at $p_0$ the center of the coordinate chart) for the derivatives of the coefficients of the approximate conic metric, extracted from \cite[\S 4.3.2]{CGP}:
{\footnotesize
\begin{equation}
\label{deriv}
g_{i\bar j, k}= \begin{cases}
O(\bar z_i (|z_i|^2+\ep^2)^{\beta_i-2}) &  \mbox{   if } i=j=k \in \{1, \ldots, r\}\\
O(1+\delta_i \bar z_i (|z_i|^2+\ep^2)^{\beta_i-1}+\delta_j \bar z_j (|z_j|^2+\ep^2)^{\beta_j-1}+\delta_k  \bar z_k (|z_k|^2+\ep^2)^{\beta_k-1} )&  \mbox{ else}
\end{cases}
\end{equation}}
where $\delta_i = 1$ if $i\in \{1, \ldots, r\}$ and $0$ otherwise.\\

Going back to $R_2$, one needs to estimate three terms. 
\begin{enumerate}
\item[$a.$] The term $\Delta v_k \cdotp \d_k \vp$.\\

\noindent
Remember from the gradient estimate that
$$|\d_k \vp|^2 = \begin{cases}
O((|z_k|^2+\ep^2)^{\beta_k -1}) & \mbox{if } k\in\{1,\ldots,r\}  \\
O(1) & \mbox{else}
\end{cases} $$
As $v_k=\sum_l h^{\bar l k}h_{t \bar l}$, we have: $$\d_{i\bar j}v_k=(h^{\bar l k})_{i\bar j}h_{t\bar l}+(h^{\bar l k})_i h_{t\bar l, \bar j}+(h^{\bar l k})_{\bar j}h_{t\bar l,i}+h^{\bar l k}h_{t\bar l, i \bar j}$$ Let us estimate all the summands involved. First, {\footnotesize
$$(h^{\bar l k})_i = -h^{\bar l \alpha}h_{\bar \alpha \beta, i}h^{\bar \beta k}=\begin{cases}
O(|z_i|(|z_i|^2+\ep^2)^{-\beta'_i}) & \mbox{if } i,k,l\in \{1, \ldots, r\}, i=k=l\\
O(|z_i|(|z_i|^2+\ep^2)^{-\beta'_i}(|z_l|^2+\ep^2)^{1-\beta'_l}) & \mbox{if } i,k,l\in \{1, \ldots, r\}, i=k,l\neq i\\
O(|z_i|(|z_i|^2+\ep^2)^{-\beta'_i}(|z_k|^2+\ep^2)^{1-\beta'_k}) & \mbox{if } i,k,l\in \{1, \ldots, r\}, i=l,k\neq i\\
O(|z_i|(|z_i|^2+\ep^2)^{\beta'_i-1}(|z_k|^2+\ep^2)^{1-\beta'_k}(|z_l|^2+\ep^2)^{1-\beta_l})) & \mbox{if } i,k,l\in \{1, \ldots, r\}, k,l\neq i\\
O(|z_i|(|z_i|^2+\ep^2)^{\beta_i'-1}) & \mbox{if } i\in \{1, \ldots, r\}, l,k \notin \{1, \ldots, r\}\\
O((|z_k|^2+\ep^2)^{1-\beta'_k}(|z_l|^2+\ep^2)^{1-\beta'_l}) & \mbox{if } i\notin \{1, \ldots, r\}, l,k \in \{1, \ldots, r\}\\
O((|z_k|^2+\ep^2)^{1-\beta'_k} & \mbox{if } i,l\notin \{1, \ldots, r\}, k \in \{1, \ldots, r\}\\
O((|z_l|^2+\ep^2)^{1-\beta'_l})) & \mbox{if } i,k\notin \{1, \ldots, r\}, l \in \{1, \ldots, r\}\\
O(1) & \mbox{else}
\end{cases}$$}
For the second derivatives, we write $$-(h^{\bar l k})_{i \bar j}=(h^{\bar l \alpha})_{\bar j}h_{\bar \alpha \beta, i}h^{\bar \beta k}+h^{\bar l \alpha}h_{\bar \alpha \beta, i\bar j}h^{\bar \beta k}+h^{\bar l \alpha}h_{\bar \alpha \beta, i}(h^{\bar \beta k})_{\bar j}$$ and use the fact (cf \cite[Eq. (23)]{CGP}) that, at the center of the chart: 
$h_{\alpha\bar \beta, i\bar j}=O((|z_i|^2+\ep^2)^{\beta'_i-2})$ if all four indexes $\alpha, \beta, i,j$ are equal and belong to $\{1, \ldots r\}$ and else, up to bounded terms, its expansion involves terms of the form $z_s(|z_s|^2+\ep^2)^{\beta_s'-1}$ and $\delta_s (|z_s|^2+\ep^2)^{\beta_s'-1}$ for $s\in \{\alpha, \beta, i,j\}\cap \{1, \ldots, r\}$ and where $\delta_s\in \{0,1\}$ is equal to $1$ iff $s$ appears at least twice in $(\alpha, \beta, i,j)$. We deduce from this:
{\footnotesize
$$(h^{\bar l k})_{i\bar j}=\begin{cases}
O((|z_i|^2+\ep^2)^{-\beta'_i}) & \mbox{if } i,j \in \{1, \ldots , r\}, i=j=k\\
O((|z_i|^2+\ep^2)^{\beta'_i-1}(|z_k|^2+\ep^2)^{\delta_k(1-\beta'_k)})& \mbox{if } i,j\in \{1, \ldots, r\},i=j, k\neq i\\
O(z_i(|z_i|^2+\ep^2)^{\beta'_i-1}z_j (|z_j|^2+\ep^2)^{\beta'_j-1}  (|z_k|^2+\ep^2)^{\delta_k(1-\beta'_k)})& \mbox{if } i,j\in \{1, \ldots, r\}, i\neq j \\
O(z_i(|z_i|^2+\ep^2)^{\beta'_i-1}  (|z_k|^2+\ep^2)^{\delta_k(1-\beta'_k)})& \mbox{if } i\in \{1, \ldots, r\},  j \notin  \{1, \ldots, r\} \\
O(z_j (|z_j|^2+\ep^2)^{\beta'_j-1}  (|z_k|^2+\ep^2)^{\delta_k(1-\beta'_k)})& \mbox{if } i\notin \{1, \ldots, r\},  j \in  \{1, \ldots, r\} \\
O(1) & \mbox{else}
\end{cases}$$}
where $\delta_k=1$ if $k\in \{1, \ldots, r\}$ and $0$ else. One can easily see that the previous term is the most singular in the expansion of $\d_{i \bar j}v_k$, so in conclusion:
Combining all these estimates, we obtain finally 
{\footnotesize
$$|\om^{i \bar j}\d_{i \bar j}v_k|^2 = \begin{cases}
O((|z_i|^2+\ep^2)^{2-2\beta_j-2\beta'_i}) & \mbox{if } i,j,k\in \{1, \ldots, r\}, i=j=k \\
O((|z_i|^2+\ep^2)^{2\beta'_i-2\beta_i}(|z_k|^2+\ep^2)^{2\delta_k(1-\beta'_k)})& \mbox{if } i,j\in \{1, \ldots, r\},i=j, k\neq i\\
O(|z_i|^2(|z_i|^2+\ep^2)^{2\beta'_i-\beta_i-1}|z_j|^2 (|z_j|^2+\ep^2)^{2\beta'_j-\beta_j-1}  (|z_k|^2+\ep^2)^{2\delta_k(1-\beta'_k)})& \mbox{if } i,j\in \{1, \ldots, r\}, i\neq j \\
O(|z_i|^2(|z_i|^2+\ep^2)^{2\beta'_i-\beta_i-1}  (|z_k|^2+\ep^2)^{2\delta_k(1-\beta'_k)})& \mbox{if } i\in \{1, \ldots, r\},  j \notin  \{1, \ldots, r\} \\
O(|z_j|^2 (|z_j|^2+\ep^2)^{2\beta'_j-\beta_j-1}   (|z_k|^2+\ep^2)^{2\delta_k(1-\beta'_k)})& \mbox{if } i\notin \{1, \ldots, r\},  j \in  \{1, \ldots, r\} \\
O(1) & \mbox{else}
\end{cases}$$}
Assume first that $k\in \{1, \ldots, r\}$. Then it follows from the estimates above that $$|\Delta v_k|^2 |\d_k \vp|^2 =O\left((|z_k|^2+\ep^2)^{1-\beta_k-2\beta'_k}+ \sum_{i\neq k}(|z_i|^2+\ep^2)^{2\beta_i'-\beta_i}\right)$$
Moreover, we have $1-\beta_k-2\beta'_k+(\beta_k-1)>-1$ and $2\beta_i'-\beta_i+(\beta_i-1)>-1$ as long as the angles $\beta'$ satisfy $\beta_i'<1/2$ for all $i$, condition which then guarantees that $||\Delta v_k \cdotp \d_k \vp||_{L^2}\le C$. \\

\item[$b.$] The term $\sum_{i,j}\om^{i\bar j} \d_i v_k \cdotp \d_{\bar j k }\vp$. 

\noindent
Given that $\om_{\bar j k}= (\alpha_t)_{\bar j k} + \d_{\bar j k }\vp$, that term splits as $\d_k v_k-\sum_{i,j}\om^{i\bar j} \d_i v_k \cdotp  (\alpha_t)_{\bar j k} $. But the estimates provided in \eqref{vder}, show that $||\d_i v_k||_{L^2} \le C$ for any $i,k$.\\

\item[$c.$] The term $\sum_{i,j,k}\om^{i\bar j} \d_{\bar j} v_k \cdotp \d_{i k }\vp$.

\noindent
From Corollary \ref{est:hess}, we deduce the existence of a positive function $H$ (depending on $\ep$) such that 
$\int H \om^n \le C$ and:
$$H^{-1}|\d_{i k }\vp|^2 = \begin{cases}
O((|z_i|^2+\ep^2)^{\beta_i-1}(|z_k|^2+\ep^2)^{\beta_k-1})& \mbox{if } i,k\in \{1, \ldots, r\}\\
O((|z_i|^2+\ep^2)^{\beta_i-1}) & \mbox{if } i\in \{1, \ldots, r\}, k\notin  \{1, \ldots, r\} \\
O((|z_k|^2+\ep^2)^{\beta_k-1}) & \mbox{if } i\notin \{1, \ldots, r\}, k\in  \{1, \ldots, r\} \\
O(1) & \mbox{else}
\end{cases}$$
Combining this information with \eqref{vder}, a tedious but straightforward case study shows that
{\footnotesize
$$ H^{-1}|\om^{i\bar j} \d_{\bar j} v_k \cdotp \d_{i k }\vp|^2 = \begin{cases}
O(|z_k|^2 (|z_k|^2+\ep^2)^{\beta_k-1}  |z_j|^2(|z_j|^2+\ep^2)^{2\beta_j'-\beta_j-1} ) & \mbox{if } j,k \in \{1, \ldots, r\}, j\neq k \\
O(|z_k|^2 (|z_k|^2+\ep^2)^{\beta_k-1} )& \mbox{if } k \in \{1, \ldots, r\}, j\notin  \{1, \ldots ,r \} \\
O(|z_j|^2(|z_j|^2+\ep^2)^{2\beta_j'-\beta_j-1} ) & \mbox{if } k \notin \{1, \ldots, r\}, j\in  \{1, \ldots ,r \} \\
O(|z_k|^2)& \mbox{if } j,k \in \{1, \ldots, r\}, j= k \\
O(1) & \mbox{else}
\end{cases}$$}
and therefore $|\om^{i\bar j} \d_{\bar j} v_k \cdotp \d_{i k }\vp|^2=O((|z_j|^2+\ep^2)^{2\beta_j'-\beta_j}H)$ which is uniformly integrable with respect to $\om$ as long as $2\beta'_j\ge \beta_j$ for all $j$. So if we choose $\beta_j'\in [\beta_j/2, 1/2)\neq \emptyset$, we will have $\beta_j' < 1/2$ and $2 \beta'_j \ge \beta_j$ which are the two conditions needed on $\beta'_j$ so far. 
\end{enumerate}
This concludes the proof of the lemma. 
\end{proof}

In summary, we have proved that $||R_1||_{L^2}+||R_2||_{L^2}\le C$, hence $\vep \cdotp \vp_{t,\ep}$ satisfies an elliptic equation where the rhs has a uniform $L^2$ bound. One can now easily conclude the proof of the proposition. 

\begin{proof}[End of the proof of Proposition \ref{C0}]
One has to prove the estimate for the real part and the imaginary part of $\vep \cdotp\vp_{t,\ep}$. Because the two proofs are completely analogous, we will focus on the real part say. Also, we will drop the indexes $\ep,t$ to alleviate the notations, and we will set $u:=\mathrm{Re}(v_{\ep} \cdotp \vp_{t,\ep})$, $R:=\mathrm{Re}(R_1+R_2)$, and we will denote $\Delta:=\Delta_{\om_{t,\ep}}$ and work with $L^p$ spaces induced by the measure $\omte^n$. With these notations, Equation \eqref{eq:lin0} translates into
\begin{equation}
\label{eq:lin3}
\Delta u = u+R
\end{equation}
If one multiplies this equation by $u$ and integrate by parts, we get
$$\int_{X} |\nabla u|^2  +\int_{X}u^2 =-\int_{X}R u$$ and therefore
\begin{equation}
\label{ineq1}
||\nabla u ||_{L^2}^2 \le ||R||_{L^2} \cdotp ||u||_{L^2}
\end{equation}
From Proposition \ref{prop:est}, we know that $\omte$ has a uniform Poincaré constant $C_P$, so $||u-\int_{X_t}u ||_{L^2} \le C_P ||\nabla u||_{L^2}$. Combining this with \eqref{ineq1}, we obtain:
\begin{equation}
\label{ineq2}
||u||_{L^2} \le V \left |\int_{X_t} u  \right|+ C_P||R||^{1/2}_{L^2} \cdotp ||u||^{1/2}_{L^2}
\end{equation}
where $V=\int_{X_t} \omte^n$ is a constant. 


\noindent
One saw above that  $||R||_{L^2}\le C$. Combining this with Proposition \ref{L1}, \eqref{ineq2} becomes:
$$||u||_{L^2} \le C(1+||u||^{1/2}_{L^2})$$
and therefore $||u||_{L^2}\le C'$ (with $C'$ such that $C'^2=(C+\sqrt{C^2+4C})/2$)).
\end{proof}

\subsection{$L^{\infty}$ bounds}
\label{Linfty}
We obtained $L^2$ bounds for $\vep \cdotp \vp_{t,\ep}$ in the previous section. 
Recall that $\vep \cdotp \vp_{t,\ep}$ is solution of \eqref{eq:lin0}, whose right hand side $R_{\ep}$ is uniformly bounded for the $L^{2}$ norm only, and a priori not for the $L^{\infty}$ norm. This prevents from using the standard Harnack inequality to get $L^{\infty}$ estimates for the solution. However, because we now have $L^2$ estimates, provided by global methods, one can go back to the local equation satisfied by $\d_t \vp_{t,\ep}$ (say on a trivializing chart $U_t\subset X_t$) which is drastically simpler: 
$$(\Delta_{\omte}-1)(\d_t \vp_{t,\ep})= \tr_{\om}(\d_t \om)-\tr_{\omte}(\d_t \alpha_t)- \sum_{i=1}^N (1-\beta_i) \d_t \log(|s_i|^2+\ep^2)$$
Obviously, the right hand side of this equation has uniform $L^{\infty}$ bounds. Moreover, $\d_t \vp_{t,\ep}=\vep \cdotp \vp_{t,\ep}+O(1)$ given the gradient estimate (Corollary \ref{est:grad}), so $||\d_t \vp||_{L^2}\le C$. Now one can use the Harnack inequality (see e.g. \cite[Theorem 8.17]{Gilb}) to get:
\begin{equation*}
||\d_t \vp_{t,\ep}||_{L^{\infty}(U_t)} \le C
\end{equation*}
and ultimately
\begin{equation}
\label{Linf}
||v_{\ep}\cdotp \vp_{t,\ep}||_{L^{\infty}(X_t)} \le C
\end{equation}
Implicitly, we used that Harnack inequality works just as well for $\omte$ because it satisfies uniform Poincaré and Sobolev inequalities (and being closed, it is legitimate to integrate by parts with respect to that form). 

Now, let $u$ denote either the real part or the imaginary part of $\d_t\vp_{t,\ep}$; one has $\Delta u-u=\mathrm{Re}\left((\tr_{\omte}-\tr_{\om})(\d_t \om)-(1-\beta) \,\d_t \log(|s|^2+\ep^2)\right)$ (or similarly with $\mathrm{Im}$). Outside $D$ the right hand side of this equation has uniform $\mathscr C^k$ bounds for all $k$ (and so does $\omte$). Given the estimate \eqref{Linf}, one can apply Schauder estimates to get
\begin{eqnarray}
\label{Linf2}
\left|\!\left|\frac{\d \vp_{t,\ep}}{\d t}\right|\!\right|_{\mathscr C^{k}(U)} \le C_{U,k}
\end{eqnarray}
for each relatively compact open subset $U \Subset X_t \smallsetminus D_t$, uniformly in $t$ (small).

\section{Positivity of the variation}
\label{positivity}

One considers the vector field $\vph$ defined as the lift of $\frac{\d}{\d t}$ with respect to the approximate Kähler-Einstein metric $\omte$. Remember that $\om$ is a background fixed Kähler form (living on the total space). One will prove:

\begin{prop}
\label{prop:L2}
There exists a constant $C>0$ independent of $t,\ep$ such that:
$$\int_{X_t} |\vph|^2_{\om}\,  \omte^n \le C$$ 
\end{prop}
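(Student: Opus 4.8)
The plan is to compare $\vph$, the canonical lift of $\d/\d t$ relative to $\omte$ (about which we know little), with $\vep$, the lift relative to the reference approximate conic metric introduced before Proposition~\ref{L1} (which we control well), and to absorb the difference using the $L^2$ estimates of the previous section.

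Write $\vph = \vep + \xi$ with $\xi := \vph - \vep$; since both vector fields lift $\d/\d t$, the field $\xi$ is tangent to $X_t$, so $|\vph|^2_\om \le 2|\vep|^2_\om + 2|\xi|^2_\om$. The coordinate expression for $\vep$ recalled after \eqref{inv} shows $|\vep|^2_\om\le C$ pointwise, and since $\int_{X_t}\omte^n$ is a fixed cohomological constant, $\int_{X_t}|\vep|^2_\om\,\omte^n\le C$. For the second term, $\xi$ being tangent to $X_t$ and $\omte\ge C^{-1}\om_{|X_t}$ by Proposition~\ref{prop:est}, it is enough to bound $\int_{X_t}|\xi|^2_{\omte}\,\omte^n$. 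In a local chart $(z,t)$ one has $\xi = w_\ep - w_\varphi$, where $w_\ep = \sum_k v_{\ep,k}\d_{z_k}$ is the vertical part of $\vep$ and $w_\varphi = \sum_{k,l}\omte^{\bar l k}(\rho_\ep)_{t\bar l}\,\d_{z_k}$ that of $\vph$. Writing $\rho_\ep = \alpha + \ddc\vp_\ep$ with $\alpha$ the smooth part of $\rho_\ep$ on $X$, one splits $w_\varphi$ into $\eta_k := \sum_l\omte^{\bar l k}\alpha_{t\bar l}$ plus a constant times the complex gradient of $\d_t\vp_{t,\ep}$ with respect to $\omte$. Using the asymptotics of $v_{\ep,k}$ and of $\omte^{-1}$ near $D$ (the latter a consequence of Proposition~\ref{prop:est}) together with $\beta'_k<1/2$, one checks that $|w_\ep|^2_{\omte}\le C$ and $|\eta|^2_{\omte}\le C$ pointwise; since the complex gradient of a function $f$ has $\omte$-norm equal to $|\bar\d f|_{\omte}$, this reduces the proposition to the uniform bound
\[\int_{X_t}\big|\bar\d(\d_t\vp_{t,\ep})\big|^2_{\omte}\,\omte^n \le C\]
(the left-hand side being understood on a finite cover of $X_t$ by coordinate charts).

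To prove this bound I would use the identity $\d_t\vp_{t,\ep} = \vep\cdot\vp_{t,\ep} + \sum_k v_{\ep,k}\,\d_k\vp_{t,\ep}$ (from $\vep = \d_t - \sum_k v_{\ep,k}\d_{z_k}$) and differentiate in the fiber directions:
\[\bar\d(\d_t\vp_{t,\ep}) = \bar\d(\vep\cdot\vp_{t,\ep}) + \sum_k(\bar\d v_{\ep,k})\,\d_k\vp_{t,\ep} + \sum_k v_{\ep,k}\,\bar\d(\d_k\vp_{t,\ep}).\]
The first term carries the \emph{new input}: the proof of Proposition~\ref{C0} in fact gives $\|\nabla(\vep\cdot\vp_{t,\ep})\|_{L^2(X_t,\omte)}\le C$ (via $\|\nabla u\|^2_{L^2}\le\|R\|_{L^2}\|u\|_{L^2}$ and the uniform bounds on $\|R\|_{L^2}$, $\|u\|_{L^2}$), hence $\int_{X_t}|\bar\d(\vep\cdot\vp_{t,\ep})|^2_{\omte}\,\omte^n\le C$. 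For the third term, since $(\ddc\vp_{t,\ep})_{|X_t} = \omte - \alpha_t$, one has $\d_{\bar\beta}\d_k\vp_{t,\ep}$ proportional to $(\omte)_{k\bar\beta}-(\alpha_t)_{k\bar\beta}$, so that $\sum_k v_{\ep,k}\bar\d(\d_k\vp_{t,\ep})$ is proportional to $\iota_{w_\ep}\omte - \iota_{w_\ep}\alpha_t$, whose $\omte$-norm is $\le C(|w_\ep|_{\omte} + |\iota_{w_\ep}\alpha_t|_{\omte})\le C$ pointwise. The remaining term $\sum_k(\bar\d v_{\ep,k})\,\d_k\vp_{t,\ep}$ is the one demanding real work: one combines the gradient estimate of Corollary~\ref{est:grad} (giving $|\d_k\vp_{t,\ep}|^2 = O((|z_k|^2+\ep^2)^{\beta_k-1})$ for $k\le r$ and $O(1)$ otherwise), the asymptotics of the $\d v_{\ep,k}$ already computed in the proof of Lemma~\ref{lem:L2}, and those of $\omte^{-1}$, running a case analysis parallel to the one there; it yields a uniform $L^2(X_t,\omte)$ bound precisely for $\beta'_k$ in the range $[\beta_k/2,1/2)$ fixed in the proof of Lemma~\ref{lem:L2}.

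Collecting the three contributions gives the displayed bound, hence $\int_{X_t}|\xi|^2_\om\,\omte^n\le C$ and finally $\int_{X_t}|\vph|^2_\om\,\omte^n\le C$. The main obstacle I anticipate is the bookkeeping in this last term: matching the mild blow-ups of $\d_k\vp_{t,\ep}$ and of the derivatives of the $v_{\ep,k}$ against the decay of $\omte^{-1}$ so that everything stays uniformly integrable against $\omte^n$. It is precisely because $\bar\d(\vep\cdot\vp_{t,\ep})$ is controlled only in $L^2$, not in $L^\infty$, that the conclusion takes the form of an integral estimate --- which is exactly what is needed to feed into the heat-kernel argument of Proposition~\ref{minoration}.
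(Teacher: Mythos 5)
Your proposal is correct and follows essentially the same route as the paper: it reduces the statement to the uniform $L^2$ bound on $\nabla(\d_t\vp_{t,\ep})$ (the paper's \eqref{L2:est}), obtains the key input $\|\nabla(\vep\cdot\vp_{t,\ep})\|_{L^2}\le C$ from \eqref{ineq1} together with the bounds on $\|R\|_{L^2}$ and $\|u\|_{L^2}$, and handles the correction terms (derivatives of the reference-metric coefficients against $\d_k\vp_{t,\ep}$, and the contraction of the fiberwise Hessian with the vertical part of $\vep$) by the same asymptotic case analysis with $\beta'_k\le 1/2$. The only difference is organizational — you split $\vph=\vep+\xi$ and bound $w_\ep$ and $\eta$ pointwise before reducing to $\bar\d(\d_t\vp_{t,\ep})$, whereas the paper passes directly from $|\vph|^2_{\om}$ to $|\nabla^{\omte}(\d_t\vp_{t,\ep})|^2$ — which is immaterial.
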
 

\begin{proof}
This statement can be checked locally, so we choose the usual system of coordinates, around a point $0\in X_t$. We write $(g_{i\bar j})$ (resp. $(h_{i\bar j})$) for the components of $\om_{\beta',t,\ep}$ (resp. $\omte$) in these coordinates. Recall that $\vph = \frac{\d}{\d t}- \sum_{i,j}h^{\bar j i}h_{t \bar j} \frac{\d}{\d z_i}$.  As $\omte$ and $\ombte$ are uniformly quasi-isometric on $X_t$, we will work on the Kähler manifold $(X_t, \ombte)$ in the following (so all gradients and $L^p$ norms will be considered with respect to $\ombte$). Once again, we will drop the indexes $\ep,t$ to lighten notation (so $v:=\vep, \vp:=\vp_{t,\ep}$). Note that up to a harmless term, $h_{i \bar j}=\frac{\d\vp}{dz_i d \bar z_j}$.

 The key observation is that the $L^{2}$ bound on $v \cdotp \vp$ obtained in Proposition \ref{C0} provides a $L^{2}$ estimate on $\nabla (v \cdotp \vp)$ because of \eqref{ineq1} (and the bound on $||R_{\ep}||_{L^2}$ proved a few lines below that inequality):
 \begin{equation}
 \label{L2}
 \int_{X_t} |\nabla (v \cdotp \vp)|^2 \, \ombte^n \le C
 \end{equation}
We decompose the gradient of a function $f$ as $\nabla f = \sum_i \nabla^i f \frac{\d}{\d \bar z_i}  $, or equivalently $\nabla^i f = \sum_j g^{i\bar j} \frac{\d}{\d \bar z_j}$. With respect to the background metric $\om$, $|\vph|^2_{\om}$ is controlled by $\sum_i \left | \sum_j h^{\bar j i}h_{t\bar j}\right|^2 \simeq \sum_i \left | \sum_j h^{\bar j i}\frac{\d}{\d z\bar j}\left(\frac{\d \vp}{\d t}\right)\right|^2$ which is $\left|\nabla^{\omte}\left( \d_t \vp \right)\right|^2$, itself controlled by $|\nabla(\d_t \vp)|^2$. So we are left to estimate $||\nabla(\d_t \vp)||_{L^2}$.

We want to relate that last quantity to $||\nabla (v \cdotp \vp)||_{L^2}$, which we have under control by \eqref{L2}. Given the definition of $v$, we get $\nabla (\d_t \vp)= \nabla(v \cdotp \vp) +\nabla(g^{\bar j k }g_{t\bar j}\d_k \vp)$, hence everything comes down to bounding the $L^2$ norm of the second term on the rhs. Given $i\in \{1, \ldots, n\}$, we compute
\begin{equation}
\label{nabla}
\nabla^i(g^{\bar j k }g_{t\bar j}\d_k \vp)=g_{t\bar j}g^{\bar j k}g^{i \bar l} (\d_{k \bar l}\vp)+(\nabla^ig_{t\bar j}) g^{\bar j k}\d_k \vp+ (\nabla^i g^{\bar j k}) g_{t \bar j}\d_k \vp
\end{equation}
We are going to bound each of the three terms in \eqref{nabla} above, say at $0$. Before that, observe that the bound $|\nabla \vp| \le C$ provided by Corollary \ref{est:grad} combined with the fact that $\ombte$ is uniformly quasi-isometric to the model $\sum_{k=1}^r (|z_k|^2+\ep^2)^{\beta_k-1} dz_k\wedge d\bar z_k + \sum_{k\ge r+1} dz_k \wedge d\bar z_k$ yields 
$$|\d_k \vp|^2 = \begin{cases}
O((|z_k|^2+\ep^2)^{\beta_k -1}) & \mbox{if } k\in\{1,\ldots,r\}  \\
O(1) & \mbox{else}
\end{cases} $$
We can start estimates the three terms of \eqref{nabla} now.

\begin{enumerate}
\item[$a.$] The term $g_{t\bar j}g^{\bar j k}g^{i \bar l} (\d_{k \bar l}\vp)$.

\noindent 
As $\ombte$ and $\omte$ are quasi-isometric, we see easily that the first term is bounded, given that $|g_{t\bar j}(0)| \le C$.\\

\item[$b.$] The term $(\nabla^ig_{t\bar j}) g^{\bar j k}\d_k \vp$.

\noindent
By the estimate on $\d_k \vp$ above, we get that for each $j$, $|g^{\bar j k}\d_k \vp| \le C$. Moreover, $\nabla^i g_{t\bar j}= \sum_k g^{\bar k i} \d_{\bar k} g_{t\bar j}$; combining the estimates \eqref{inv} and \eqref{deriv}, we see that $g^{\bar k i} \d_{\bar k} g_{t\bar j}$ is always uniformly bounded, hence so is $(\nabla^ig_{t\bar j}) g^{\bar j k}\d_k \vp$.\\

\item[$c.$] The term $(\nabla^i g^{\bar j k}) g_{t \bar j}\d_k \vp$.

\noindent
We have: $\nabla^i g^{\bar j k}=\sum_l g^{\bar l i}( \d_{\bar l}g^{\bar j k})= -\sum_{l,\alpha, \beta}g^{\bar l i}g^{\bar j \alpha} (\d_{\bar l}g_{\alpha \bar \beta}) g^{\bar \beta k}$. If $k\notin \{1, \ldots, r\}$, then
$$g^{\bar j \alpha}(\d_{\bar l}g_{\alpha \bar \beta})g^{\bar \beta k} = 
\begin{cases}
O(z_l (|z_l|^2+\ep^2)^{-\beta'_l}) & \mbox{if } \alpha=\beta=l\in\{1, \ldots, r\}\\
O(z_l (|z_l|^2+\ep^2)^{\beta'_l-1}) & \mbox{if }  (\alpha \mbox{ or } \beta \notin\{1, \ldots, r\}), l\in \{1, \ldots, r\}\\
O(1) & \mbox{else}
\end{cases}$$
and if $k\in\{1, \ldots, r\}$, 
$$g^{\bar l i}g^{\bar j \alpha}(\d_{\bar l}g_{\alpha \bar \beta})g^{\bar \beta k} = 
\begin{cases}
O(z_l (|z_l|^2+\ep^2)^{1-2\beta'_l} ) & \mbox{if } \alpha,\beta,l\in \{1, \ldots, r\}, k = l\\
O(z_l (|z_l|^2+\ep^2)^{1-2\beta'_l}(|z_k|^2+\ep^2)^{1-\beta'_k} ) & \mbox{if } \alpha,\beta,l\in \{1, \ldots, r\}, k \neq l\\
O(z_l (|z_k|^2+\ep^2)^{1-\beta'_k} ) & \mbox{if } (\alpha \mbox{ or } \beta \notin\{1, \ldots, r\}),l\in \{1, \ldots, r\}\\
O((|z_k|^2+\ep^2)^{1-\beta'_k}) &\mbox{else}

\end{cases}$$

\noindent
If $k\notin  \{1, \ldots, r\} $, then $g_{t \bar j}\d_k \vp$ is bounded, and $$|g^{\bar l i}g^{\bar j \alpha} (\d_{\bar l}g_{\alpha \bar \beta}) g^{\bar \beta k} g_{t\bar j}\d_k \vp|^2=
\begin{cases}
O(|z_l|^2 (|z_l|^2+\ep^2)^{2-4\beta'_l}+|z_l|^2) &\mbox{if } l \in  \{1, \ldots, r\} \\
O(1) & \mbox{else}
\end{cases}$$
If $k\in  \{1, \ldots, r\}$, then $|\d_k \vp|^2=O((|z_k|^2+\ep^2)^{\beta_k-1})$ hence

$$|g^{\bar l i}g^{\bar j \alpha} (\d_{\bar l}g_{\alpha \bar \beta}) g^{\bar \beta k} g_{t\bar j}\d_k \vp|^2=
\begin{cases}
O(|z_k|^2 (|z_k|^2+\ep^2)^{1-4\beta'_k+\beta_k}) &\mbox{if } k=l \\
O( (|z_k|^2+\ep^2)^{1-2\beta'_k+\beta_k}) & \mbox{else}
\end{cases}$$ 
As the coefficients $\beta'_i$ satisfy $\beta'_i \le 1/2$, all the above expressions are uniformly bounded and therefore $ \left| (\nabla^i g^{\bar j k}) g_{t \bar j}\d_k \vp \right |^2 $ is uniformly integrable.\\

%

\end{enumerate}

In conclusion, $||\nabla(g^{\bar j k }g_{t\bar j}\d_k \vp)||_{L^2} \le C$, and therefore we get an $L^2$ estimate 
\begin{equation}
\label{L2:est}
||\nabla \d_t \vp||_{L^2} \le C
\end{equation} 
By the observations at the beginning of the proof, if shows that $\vph$ is has a uniform $L^2$ bound.
\end{proof}

We will also need the following result, which is a rather easy consequence of the above proof:

\begin{prop}
\label{support}
On $X_t$, we have (uniformly in $t$): 
$$\lim_{\ep \to 0} \int_{\bigcup\{|s_i|^2<\ep\}}  |\vph|^2_{\om}\,  \omte^n = 0$$
\end{prop}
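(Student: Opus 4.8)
The strategy is to revisit the estimates from the proof of Proposition \ref{prop:L2} and track the dependence of the integrand $|\vph|^2_{\om}\,\omte^n$ on the distance to the divisor, localizing the integration to the shrinking neighbourhood $\bigcup\{|s_i|^2<\ep\}$. As explained there, $|\vph|^2_{\om}$ is controlled by $|\nabla^{\omte}(\d_t\vp)|^2$, and since $\nabla(\d_t\vp) = \nabla(v\cdotp\vp) + \nabla(g^{\bar jk}g_{t\bar j}\d_k\vp)$, it suffices to show that each of the two contributions, integrated over $\bigcup\{|s_i|^2<\ep\}$ against $\omte^n$, tends to $0$ as $\ep\to 0$.

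For the term $\nabla(g^{\bar jk}g_{t\bar j}\d_k\vp)$: in the proof of Proposition \ref{prop:L2} each of the three summands in \eqref{nabla} was shown to be $O\big((|z_k|^2+\ep^2)^{a}\big)$-type expressions with nonnegative exponents (using $\beta'_i\le 1/2$); one reexamines those bounds and observes that the exponents are in fact \emph{strictly} positive in the conic directions (or can be taken so, up to choosing the $\beta'_i$ with a little room below $1/2$), so that the integrand is $O(\prod_i (|z_i|^2+\ep^2)^{\delta})$ for some $\delta>0$ on the relevant region. Restricting to $\{|z_i|^2<\ep\}$ forces $(|z_i|^2+\ep^2)^{\delta}\le (2\ep)^{\delta}\to 0$; combined with the fact that $\omte^n$ has uniformly bounded mass, this piece goes to $0$. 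For the term $\nabla(v\cdotp\vp)$, one uses instead the global $L^2$ bound \eqref{L2}, namely $\int_{X_t}|\nabla(v\cdotp\vp)|^2\,\ombte^n\le C$, together with the fact that the Lebesgue measure of $\bigcup\{|s_i|^2<\ep\}$ (equivalently the $\omte^n$-measure, which is mutually bounded with a fixed smooth measure up to the factor $\prod(|s_i|^2+\ep^2)^{\beta_i-1}$) tends to $0$; more precisely one splits $\int_{\{|s_i|^2<\ep\}}|\nabla(v\cdotp\vp)|^2 = \int_{\{|s_i|^2<\ep\}\cap\{|\nabla(v\cdotp\vp)|\le M\}} + \int_{\{|\nabla(v\cdotp\vp)|>M\}}$: the first is $\le M^2\,\mathrm{vol}_{\omte}(\{|s_i|^2<\ep\})$ which is small for fixed $M$ once $\ep$ is small, and the second is $\le \frac{1}{M^2}\cdot$ (something controlled by the argument leading to \eqref{L2}, e.g. an $L^4$ bound, or one simply uses absolute continuity of the integral of the fixed $L^1$ function $|\nabla(v\cdotp\vp)|^2$ over sets of vanishing measure), small uniformly once $M$ is large. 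Letting $M\to\infty$ after $\ep\to 0$ gives the claim.

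Putting the two estimates together yields $\int_{\bigcup\{|s_i|^2<\ep\}}|\vph|^2_{\om}\,\omte^n\to 0$, and uniformity in $t$ follows because every estimate invoked (Corollary \ref{est:grad}, the bounds \eqref{inv}, \eqref{deriv}, Proposition \ref{C0}, \eqref{L2}, Proposition \ref{prop:est}) is uniform in $t$ on a suitably shrunk disk.

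**Main obstacle.** The delicate point is the $\nabla(v\cdotp\vp)$ contribution: unlike the other summands it is controlled only in $L^2$, not pointwise, so one cannot simply read off a power of $\ep$. The argument must exploit that the region of integration has vanishing measure and combine it with a uniform integrability input — either a genuine $L^{2+\delta}$ (or $L^4$) bound on $\nabla(v\cdotp\vp)$, or a compactness/equi-integrability argument — and one has to be careful that the measures $\omte^n$ vary with $\ep$ (they blow up like $\prod|s_i|^{2(\beta_i-1)}$ near $D$), so the smallness of the measure of $\{|s_i|^2<\ep\}$ must be checked for this family of measures, not just for a fixed one. This is where most of the care in writing the full proof will go.
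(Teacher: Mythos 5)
Your handling of the second contribution, $\int_{V_\ep}|\nabla(\vep\cdotp\vp_{t,\ep})|^2\,\omte^n$ with $V_\ep=\bigcup\{|s_i|^2<\ep\}$, has a genuine gap, and it is exactly the point you yourself flag as the "main obstacle" without resolving it. The splitting at level $M$ only works if you have some uniform-in-$\ep$ integrability beyond $L^2$: with the bound \eqref{L2} alone, the tail $\int_{\{|\nabla(\vep\cdotp\vp)|>M\}}|\nabla(\vep\cdotp\vp)|^2$ is not $O(1/M^2)$ (that would require an $L^4$ or $L^{2+\delta}$ bound, which neither the paper nor your proposal establishes), and the alternative you mention --- "absolute continuity of the integral of the fixed $L^1$ function $|\nabla(v\cdotp\vp)|^2$" --- is not available because this function is \emph{not} fixed: $\vep$, $\vp_{t,\ep}$, and the metric $\omte$ defining both the norm and the measure all vary with $\ep$. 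A uniformly bounded $L^2$ family integrated over sets of shrinking measure need not tend to zero, so as written this step fails.

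The paper avoids higher integrability altogether by going back to the elliptic equation \eqref{eq:lin3}, $\Delta u=u+R$ for $u=\mathrm{Re}$ or $\mathrm{Im}(\vep\cdotp\vp_{t,\ep})$: one multiplies by $\chi_\ep u$, where $\chi_\ep$ is a cut-off supported in $V_\ep$ with $\int|\nabla\chi_\ep|^2\,\omte^n\to 0$ (the construction of \cite[\S 9]{CGP}), and integrates by parts. Using the $L^\infty$ bound \eqref{Linf} on $u$ --- an ingredient your argument never invokes --- together with the $L^2$ bounds on $\nabla u$ and $R$ and the smallness of $\mathrm{Vol}(V_\ep)$, every term on the right is $o(1)$, giving $\int\chi_\ep|\nabla u|^2\,\omte^n\to 0$ directly. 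If you want to salvage your route you would have to prove an equi-integrability statement for $|\nabla(\vep\cdotp\vp_{t,\ep})|^2$ uniformly in $\ep$, which is not easier than the cut-off argument. By contrast, your treatment of the term $\nabla(g^{\bar jk}g_{t\bar j}\d_k\vp)$ is essentially the paper's: the claim of strictly positive exponents is not quite accurate (several summands are only $O(1)$), but domination by an $\ep$-independent function square-integrable against the fixed measure $\om^n/\prod|s_i|^{2(1-\beta_i)}$, combined with the shrinking of $V_\ep$, is exactly the dominated-convergence argument used in the paper and suffices.
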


\begin{proof}
Recall from the proof above that $ |\vph|^2_{\om}$ is controlled by $|\nabla(\d_t \vp)|^2$, and that we have the relation $\nabla (\d_t \vp)= \nabla(v \cdotp \vp) +\nabla(g^{\bar j k }g_{t\bar j}\d_k \vp)$. It also follows from the proof above that $\nabla(g^{\bar j k }g_{t\bar j}\d_k \vp)$ is dominated by an explicit function independent of $\ep$, say $G$, and such that $\int_{X_t} G^2 \omte^n <+\infty$. By Lebesgue dominated convergence theorem, it follows that  $\int_{\{\sum |s_i|^2<\ep\}}|\nabla(g^{\bar j k }g_{t\bar j}\d_k \vp)|^2 \omte^n$ converges to $0$.

For the other term, once can work separately with the real and imaginary part of $v \cdotp \vp$. Recall equation \eqref{eq:lin3}, that reads $\Delta u = u +R$ if $u=v \cdotp \vp$ and $R$ satisfies an $L^2$ bound. We can consider an cut-off function $\chi_{\ep}$ that vanishes outside  $V_{\ep}:=\bigcup \{ |s_i|^2<\ep\}$ and such that $\int |\nabla \chi_{\ep}|^2 \omte^n \to 0$ when $\ep$ goes to $0$, cf \cite[\S 9]{CGP}. Multiplying the relation satisfied by $u$ above by $\chi_{\ep}u$ and integrating by parts, we get 
\begin{eqnarray*}
\int_{X_t}\chi_{\ep}|\nabla u |^2 \omte^n &\le& \int_{X_t}|u| \cdotp \la \left|\nabla \chi_{\ep}, \nabla u\ra\right| \omte^n + \int_{X_t}\chi_{\ep} |R|\, |u|\, \omte^n \\
& \le &||u||_{\infty} ||\nabla \chi_{\ep}||_{L^2} ||\nabla u ||_{L^2}+C ||u||_{\infty} \cdotp ||R||_{L^2} \mathrm{Vol}\left(V_{\ep} \right)
\end{eqnarray*}
and by the bounds on $ ||u||_{\infty},||\nabla u ||_{L^2}, ||R||_{L^2}$ at our disposal, we get the expected result.
\end{proof}

%

\vspace{2mm}

\begin{coro}
\label{cor:positive}
The current $\rho$ is positive.
\end{coro}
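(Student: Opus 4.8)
The plan is to deduce the positivity of $\rho$ on all of $X_0$ from the estimates accumulated in the previous sections, via the geodesic curvature $c(\re)$ of the approximate currents. First I would recall that since $\vp_\ep \to \vp$ with uniformly bounded potentials (Proposition \ref{prop:est}), $\rho_\ep \to \rho$ in the sense of currents, so it suffices to show that $\liminf_{\ep\to 0} c(\re)(z)\ge 0$ for every $z\in X_0\ssm D$ --- indeed, positivity of a closed $(1,1)$-current on a total space of a one-dimensional fibration is detected fiberwise by the transverse eigenvalue, which is exactly $c(\re)$, together with the (known) positivity of $\rho_\ep$ restricted to the fibers. Since $X_0 \ssm D$ is dense and $\rho$ is closed with bounded potentials, $\rho\ge 0$ on $X_0\ssm D$ forces $\rho\ge 0$ on $X_0$.

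The core step is to write down the elliptic equation satisfied by $c(\re)$ on a fixed fiber $X_t$ (renamed $X$). Differentiating the Monge--Ampère / Kähler--Einstein equation \eqref{eq:eps} twice in the base direction along the canonical lift $\vr:=\vph$, one obtains a Schumacher--Paun type identity of the shape
\begin{equation*}
(-\Delta_{\re}+\mathrm{Id})\, c(\re) = |\db \vr|^2_{\re} + \gamma(\vr,\vr) + \sum_i(1-\beta_i) w_{i,\ep}(\vr,\vr),
\end{equation*}
where $w_{i,\ep}=\tt+\ddc\log(|s_i|^2+\ep^2)$ is the smooth approximation of $[D_{i,t}]$ appearing in \eqref{eq:eps}. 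The first two terms on the right are $\ge 0$ ($\gamma$ being semipositive), but $w_{i,\ep}$ is not semipositive; writing $w_{i,\ep}=\frac{\ep^2|D's_i|^2}{(|s_i|^2+\ep^2)^2}+\frac{\ep^2}{|s_i|^2+\ep^2}\tt$, only the second summand can be negative, and it is pointwise bounded in absolute value by $C\frac{\ep^2}{|s_i|^2+\ep^2}|\vr|^2_\om$. Hence $(-\Delta_{\re}+\mathrm{Id})c(\re)\ge -g_\ep$ with $g_\ep := C\sum_i \frac{\ep^2}{|s_i|^2+\ep^2}|\vr|^2_\om\ge 0$. Now apply Proposition \ref{minoration} (the heat-kernel lower bound): for each $z\in X$,
\begin{equation*}
c(\re)(z) \ \ge\ -C\int_X \frac{g_\ep(w)}{d_{\ome}(z,w)^{2n-2}}\, dV_{\ome}(w)
\end{equation*}
(with exponent $1$ if $n=1$). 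The point is that $\frac{\ep^2}{|s_i|^2+\ep^2}\le \mathbf 1_{\{|s_i|^2<\ep\}} + \ep \cdot \mathbf 1_{\{|s_i|^2\ge\ep\}}$ roughly, so $\int_X g_\ep\, dV_{\ome}$ is controlled by $\int_{\bigcup\{|s_i|^2<\ep\}}|\vr|^2_\om\,\omte^n$ plus a term that is $O(\ep)\int_{X_t}|\vr|^2_\om\,\omte^n$; both tend to $0$ by Proposition \ref{support} together with the uniform $L^2$ bound of Proposition \ref{prop:L2}. The weight $d_{\ome}(z,\cdot)^{-(2n-2)}$ is locally integrable and the diameter is uniformly bounded (Proposition \ref{prop:est}), so by a Hölder/Cauchy--Schwarz splitting of the integral into the region near $z$ and away from $z$ one upgrades the $L^1$-decay of $g_\ep$ to the decay of the weighted integral; for $z$ ranging in a relatively compact $\Omega\Subset X_0\ssm D$ this is uniform in $t$. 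Therefore $\liminf_{\ep\to 0}c(\re)(z)\ge 0$ uniformly on $\Omega$, whence $\rho\ge 0$ on $X_0\ssm D$ and then on all of $X_0$.

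The main obstacle is the last analytic point: controlling $\int_X g_\ep(w)\, d_{\ome}(z,w)^{-(2n-2)}\,dV_{\ome}(w)$ rather than just $\|g_\ep\|_{L^1}$, and doing so uniformly in $t$ and in $z\in\Omega$. This requires knowing that the singular weight $d_{\ome}(z,\cdot)^{-(2n-2)}$ lies in $L^q$ for some $q>1$ with a bound independent of $\ep,t$ (which follows from the volume lower bound $\vol(B_z(r))\ge cr^{2n}$ from Proposition \ref{prop:est} and the uniform diameter bound), so that Hölder against the $L^{q'}$-smallness of a suitable power of $g_\ep$ --- itself extracted from Propositions \ref{prop:L2} and \ref{support} --- closes the estimate. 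Once $c(\re)$ is shown asymptotically nonnegative off $D$, the passage $\rho_\ep\to\rho$ and the removal of the (pluripolar) set $D$ are soft: a closed positive current with locally bounded potentials on $X_0\ssm D$ extends (trivially, by continuity of potentials) to a closed positive current on $X_0$, which is the assertion of Corollary \ref{cor:positive}.
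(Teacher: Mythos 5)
Your overall strategy coincides with the paper's: the Schumacher--P\u{a}un identity for $c(\re)$, the bound of the negative part by $C\sum_i\frac{\ep^2}{|s_i|^2+\ep^2}|\vph|^2_{\om}$, the heat-kernel estimate of Proposition \ref{minoration}, and the splitting of $\frac{\ep^2}{|s_i|^2+\ep^2}$ on $V_\ep=\bigcup\{|s_i|^2<\ep\}$ versus its complement handled by Propositions \ref{support} and \ref{prop:L2}. But the step you yourself flag as "the main obstacle" --- passing from the $L^1$-smallness of $g_\ep$ to the smallness of $\int_{X_t} g_\ep(w)\,d_{\ome}(z,w)^{2-2n}dV_{\ome}(w)$ --- is not closed by the Hölder argument you propose. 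Hölder against the weight requires $\|g_\ep\|_{L^{q'}}$ to be small (or at least uniformly bounded) for some $q'>1$ (in fact $q'>n$, since $d_{\ome}(z,\cdot)^{2-2n}\in L^q$ only for $q<\frac{n}{n-1}$), and no such bound is available: Propositions \ref{prop:L2} and \ref{support} control only $\int_{X_t}|\vph|^2_{\om}\,\omte^n$, i.e. $g_\ep$ in $L^1$. Taking "a suitable power of $g_\ep$" does not help either: writing $g_\ep d^{2-2n}=g_\ep^{1/q'}\cdot g_\ep^{1/q}d^{2-2n}$ and applying Hölder just moves the singular weight onto a factor you again cannot control. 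With only $L^1$ information, mass of $g_\ep$ could a priori concentrate at $z$ and defeat the estimate; no higher integrability of $\nabla\d_t\vp_{t,\ep}$ (equivalently of $|\vph|_{\om}$) beyond $L^2$ is proved anywhere in the paper.

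The paper closes this step differently, and this is exactly where the interior estimates of \S\ref{Linfty} are used: since $z\in X_t\ssm D_t$, one fixes a neighborhood $U_z\Subset X_t\ssm D_t$ on which, by \eqref{Linf2}, $|\vph|^2_{\om}\le C(z)$ \emph{pointwise} and $|s_i|^2\ge C^{-1}$, so that $g_\ep=O(\ep^2)$ pointwise on $U_z$; the near-$z$ contribution is then $\le C\ep^2\int_{U_z}d_{\omte}(z,w)^{2-2n}dV_{\omte}(w)\to 0$, while on $X_t\ssm U_z$ the weight is bounded and the $L^1$-smallness of $g_\ep$ (your step 6, which is correct) finishes the job. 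So the missing ingredient in your write-up is the pointwise bound on $|\vph|_{\om}$ near $z$ coming from the $\mathscr C^k$ estimates on $\d_t\vp_{t,\ep}$ away from $D$; with it the splitting is elementary and no Hölder inequality is needed. The remaining soft parts of your argument (weak convergence $\rho_\ep\to\rho$, passage from $X_0\ssm D$ to $X_0$ using bounded potentials) agree with the paper, though the extension across $D$ is justified there by comparing $\rho$ with the trivial extension via a bounded function pluriharmonic outside $D$, rather than by "continuity of potentials".
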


\begin{proof}

Recall from \cite[Eq. (35)]{Paun12} (cf also \cite[Proposition 3]{Schum}) that on $X_t$, 
\begin{equation}
\label{eq:c}
(-\Delta+\mathrm{id})c(\rho_{\ep}) = |\bar \d \vph |^2 + \sum_i(1-\beta_i) \left[\tt+\ddc \log(|s_i|^2+\ep^2)\right](\vph,\vph)+|\vph|^2_{\gamma}
\end{equation}
where $\rho_{\ep}^{n+1}=c(\rho_{\ep}) \rho_{\ep}^n \wedge i dt\wedge d\bar t$, $\Delta = \Delta_{\om_{\vp_{t,\ep}}}$ and $\tt$ is a smooth metric on $D_{i,t}$. One deduces the following inequality:
\begin{eqnarray*}
(-\Delta+\mathrm{id})c(\rho_{\ep}) &\ge& \sum_i(1-\beta_i)\frac{\ep^2}{|s_i|^2+\ep^2}\, \tt(v_{\vp},v_{\vp})\\
&\ge &-\sum_i \frac{C\ep^2}{|s_i|^2+\ep^2} \,|\vph|_{\om}^2
\end{eqnarray*}
for the background Kähler metric $\om$. 
Applying Proposition \ref{minoration}, one obtains for any $z\in X_t$:

\begin{equation}
\label{infc}
 c(\rho_{\ep})(z) \ge -C \int_{X_t}\left(\sum_i\frac{\ep^2 }{|s_i|^2+\ep^2}\right)\cdotp |\vph|_{\om}^2(w)\cdotp  \frac{1}{d_{\om}(z,w)^{2n-2}} \,dV_{\om}(w)
\end{equation}
where $d_{\om}$ is the geodesic distance associated with $\om$ (here and in what follows, replace the exponent $2n-2$ by $1$ if $n=1$). \\

Assume for now that $z\in X_t\ssm D_t$. 

\noindent
Then it follows from \eqref{Linf2} that there exists a a small neigborhood $U_z \Subset X_t \ssm D_t$ and a constant $C=C(z)>0$ such that:\\
\begin{enumerate}
\item[$\bullet$] $|\vp|^2_{\om} \le C$ and $|s_i|^2 \ge C^{-1}$ hold on $U_z$ for any index $i$;\\
\item[$\bullet$] $\int_{U_z} d_{\omte}(z,w)^{2-2n} dV_{\omte}(w) \le C$;\\
\item[$\bullet$] $d_{\omte}(z,w)^{2-2n} \le C$ for any $w\notin U_z$.\\
\end{enumerate}

The second item is a consequence of a simple calculation with the model approximate conic metric. Eventually, one gets:
$$c(\rho_{\ep})(z) \ge -C(z) \left(\ep^2 \int_{U_z}d_{\omte}(z,w)^{2-2n} dV_{\omte}(w) +\int_{X_t \ssm U_z}\left(\sum_i\frac{\ep^2 }{|s_i|^2+\ep^2}\right)\cdotp |\vph|_{\om}^2 \,dV_{\omte}\right)$$
The first term in the right hand side converges to zero because of the second bullet point. As for the second term, one can handle it using Propositions \ref{support} and \ref{prop:L2}.\\
 
Indeed, recall the notation $V_{\ep}=\bigcup \{|s_i| ^2 < \ep\}$, and let us write $N:= \#\{\mbox{components of } D\}$.  One always has $0 \le \sum_i\frac{\ep^2 }{|s_i|^2+\ep^2}\le N$. And on the complement of $V_{\ep}$, one has $ \sum_i\frac{\ep^2 }{|s_i|^2+\ep^2} \le N\ep$. Therefore:

$$\int_{X_t}|\vph|_{\om}^2\left(\sum_i\frac{\ep^2 }{|s_i|^2+\ep^2}\right) \,\omte^n \le N \int_{V_{\ep}} |\vph|_{\om}^2\,\omte^n+N\ep \int_{X_t \ssm V_{\ep}} |\vph|_{\om}^2\,\omte^n$$
In the right hand side, the first term converges to zero by Proposition \ref{support} while the second also converges to zero thanks to Proposition \ref{prop:L2}.\\

In summary, we have proved that if $z\in X_t\ssm D_t$, then 
\begin{equation}
\label{liminf}
\liminf_{\ep \to 0} c(\rho_{\ep})(z)\ge 0
\end{equation}
Even better, if $K\Subset X_t \ssm D_t$, one has 
\begin{equation}
\label{liminf2}
\liminf_{\ep \to 0} \inf_{z\in K} c(\rho_{\ep})(z)\ge 0
\end{equation}

We claim that $\rho$ is the weak limit of $\rho_{\ep}$. Indeed, this is a consequence of the convergence of the potentials on each fiber therefore everywhere on $X$ combined with the uniform $L^{\infty}$ estimate on the potentials cf Proposition \ref{prop:est} allowing one to use Lebesgue dominated convergence theorem.

As a result, $\rho$ is a positive current on $X\ssm D$.  Now $\rho$ has bounded potentials thus $\rho_{|X\ssm D}$ extends trivially to a unique positive current $\bar \rho$ on $X$. So locally, the potentials of $\rho$ and $\bar \rho$ differ by a function $F$ which is locally bounded and pluriharmonic outside $D$, hence $F$ is polyharmonic everywhere and $\rho=\bar \rho$ is a positive current on $X$. 
\end{proof}

\section{Second order estimates in the transverse direction}

The strategy is the same, but the computations get heavier. Here $\vep$ is still the vector field lifting $\frac{\d}{\d t}$ with respect to the metric $\om_{\beta',\ep}$ for some $\beta'=(\beta'_1, \ldots, \beta'_N)$ with $\beta'_i \leq \min \{\beta_i, 1/2\}$. Let us set some notations first. We will work in a trivializing chart with coordinates $(z_1, \ldots, z_n, t)$ where $D=(z_1\cdots z_r=0)$. We will drop the indexes $\ep$ and $t$ to lighten notations once again. The $L^p$ spaces are computed with respect to $\omte^n$, or equivalently $\ombte^n$.
We write $v=\frac{\d}{\d t}+\sum_{i=1}^n v_i \frac{\d}{\d z_i}$ where $v_i = -\sum_jh^{\bar j i}h_{t\bar j}$ if $(h_{\alpha \bar \beta})$ denotes the components of $\omega_{\beta',t,\ep}$ in these coordinates. 
Therefore, we get
$$\bar v \cdotp v = \d_{\bar t t}+v_i \d_{\bar t i}+\bar v_k \d_{\bar k t}+\bar v_kv_i \d_{\bar k i}+(\d_{\bar t}v_i+\bar v_k \d_{\bar k}v_i) \d_i$$
We claim that 
\begin{equation}
\label{interm}
|\! |(\bar v \cdotp v - \d_{\bar t t})(\vp)|\! |_{L^{2}}\le C
\end{equation}
for some uniform $C$. Here the $L^{2}$ norm is taken with respect to $\omte$ on any coordinate chart of $X_t$. To see this, recall that we have 
$$\sum_{k=1}^r(|z_k|^2+\ep^2)^{\beta'_k-1}|v_k|+\sum_{k=r+1}^n |v_k| \le C$$
and by \eqref{prop:est}-\eqref{L2:est}:
$$\sum_{k,i} \left|\!\left|\bar v_kv_i \d_{\bar k i}\vp\right|\!\right|_{L^{\infty}}+ |\!| \sum_{k=1}^r(|z_k|^2+\ep^2)^{1-\beta_k}|\d_{\bar k t} \vp|^2+\sum_{k=r+1}^n  |\d_{\bar k t} \vp|^2|\!|_{L^1}\le C$$
We are left to estimating the $L^2$ norm of the term $(\d_{\bar t}v_i+\bar v_k \d_{\bar k}v_i) \d_i\vp$. 
Thanks to Corollary \ref{est:grad} and \eqref{Linf} we get: 
$$|\d_t\vp|+ \sum_{k=1}^r(|z_k|^2+\ep^2)^{1-\beta_k} |\d_k \vp|^2 +\sum_{k=r+1}^n |\d_k \vp| \le C$$ 
As $\d_k v_i=- g^{i \bar \alpha} g_{\alpha \bar \beta, k}g^{\beta \bar j}g_{t\bar j}-g^{i \bar j}g_{t\bar j,k}$ and all the coefficients $\beta'_{\bullet}$ are $\le 1/2$, it is easy to show that
\begin{equation}
\label{vder}
\d_k v_i = \begin{cases}
O(z_i) & \mbox{if } i=k \in \{1, \ldots, r\} \\
O(z_i z_k (|z_k|^2+\ep^2)^{\beta'_k-1}) & \mbox{if } i,k \in \{1, \ldots, r\}, i \neq k \\
O(z_k (|z_k|^2+\ep^2)^{\beta'_k-1}) & \mbox{if } k \in \{1, \ldots, r\}, i \notin \{1, \ldots, r\} \\
O(z_i) & \mbox{if } i \in \{1, \ldots, r\}, k\notin \{1, \ldots, r\}  \\
O(1)  &\mbox{else}
\end{cases}
\end{equation}
Therefore, $\d_{\bar t}v_i+\bar v_k \d_{\bar k}v_i=O(1)$ if $i\notin \{1, \ldots, r\}$ and $\d_{\bar t}v_i+\bar v_k \d_{\bar k}v_i=O(z_i)$. As a result, 
$$|(\d_{\bar t}v_i+\bar v_k \d_{\bar k}v_i) \d_i\vp|^2 = \begin{cases}
O((|z_i|^2+\ep^2)^{\beta'_i}) & \mbox{if } i \in \{1, \ldots, r\}\\ 
O(1) & \mbox{else } 
\end{cases}$$
which prove \eqref{interm}.
\begin{prop}
\label{2nd}
There exists $C>0$ independent of $\ep,t$ such that 
$$\left|\int_{X_t} (\bvep \cdotp \vep \cdotp \vp_{t,\ep}) \, \omte^n \right| \le C$$
\end{prop}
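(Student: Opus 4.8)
The plan is to mimic the strategy used to bound $\int_{X_t}(\vep\cdotp\vp_{t,\ep})\,\omte^n$ in Proposition \ref{L1}, but now applied one order higher: differentiate twice, once with $\vep$ and once with $\bvep$, the ``constant volume'' identity rather than the Monge--Amp\`ere equation itself. Concretely, since $\omte$ lies in a fixed cohomology class on $X_t$, its volume $V=\int_{X_t}\omte^n$ does not depend on $t$; applying Proposition \ref{fint} twice (first with the lift $\vep$, then with $\bvep=\overline{\vep}$, both of which project to $\d/\d t$ and $\d/\d\bar t$ respectively) to the function $e^{\vp_{t,\ep}}/\prod(|s_i|^2+\ep^2)^{1-\beta_i}$ and using $\omte^n=\tfrac{e^{\vp_{t,\ep}}}{\prod(|s_i|^2+\ep^2)^{1-\beta_i}}\om^n$, one gets an identity of the shape
\begin{equation*}
\int_{X_t}\big(\bvep\cdotp\vep\cdotp\vp_{t,\ep}\big)\,\omte^n + \int_{X_t}\big(\vep\cdotp\vp_{t,\ep}\big)\big(\overline{\vep\cdotp\vp_{t,\ep}}\big)\,\omte^n = \sum_i(1-\beta_i)\int_{X_t}\bvep\cdotp\vep\cdotp\log(|s_i|^2+\ep^2)\,\omte^n + (\text{lower order}).
\end{equation*}
Here the ``lower order'' terms are cross-terms coming from the fact that $\vep$ and $\bvep$ do not commute with multiplication by $\vp_{t,\ep}$ nor with each other, and they will involve products like $(\vep\cdotp\vp_{t,\ep})\cdot(\vep\cdotp\log(|s_i|^2+\ep^2))$ and commutator terms for the vector fields; all of these must be seen to be uniformly bounded.

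The second term on the left is $\|\vep\cdotp\vp_{t,\ep}\|_{L^2(X_t)}^2$, which is uniformly bounded by Proposition \ref{C0}, so it can be moved harmlessly to the right-hand side. Thus everything reduces to showing that the right-hand side is uniformly bounded, i.e.\ that $\int_{X_t}\bvep\cdotp\vep\cdotp\log(|s_i|^2+\ep^2)\,\omte^n$ is uniformly bounded, together with the cross terms. For the main term one computes $\bvep\cdotp\vep\cdotp\log(|s_i|^2+\ep^2)$ in the usual coordinates centered at $p_0\in X_t$: using the expansion $v_\ep = \sum_{i=1}^r O((|z_i|^2+\ep^2)^{1-\beta'_i})\d_i + \sum_{i>r}O(1)\d_i$, the formula $\d_k\log(|s_i|^2+\ep^2)=\bar z_i/(|z_i|^2+\ep^2)$ (at $p_0$, for $k=i$), and the derivative estimates \eqref{inv}, \eqref{vder} for the coefficients $v_k$ and $\d v_k$, one expects a bound like $O((|z_i|^2+\ep^2)^{-\beta'_i})$ for $\bvep\cdotp\vep\cdotp\log(|s_i|^2+\ep^2)$; since $\beta'_i<1/2<1$, this is integrable against $\omte^n$ (whose density against Lebesgue is $O(\prod(|z_k|^2+\ep^2)^{\beta_k-1})$, which is itself integrable), uniformly in $\ep,t$. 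The cross terms are handled similarly: each factor is controlled either by \eqref{vep}, by \eqref{Linf}, by the gradient estimate (Corollary \ref{est:grad}), or by the just-obtained $L^2$ bound on $\vep\cdotp\vp_{t,\ep}$, and the measure $\omte^n$ absorbs the mild singularities.

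The main obstacle, as usual in this paper, is the bookkeeping of the singular terms near $D$: one has to track carefully which powers of $(|z_i|^2+\ep^2)$ appear after two differentiations involving the conic vector field $\vep$, and check that the total exponent (including the $\beta_k-1$ coming from the density of $\omte^n$) stays $>-1$ so that the integral converges uniformly in $\ep$. The choice $\beta'_i\in[\beta_i/2,1/2)$ already made in the $L^2$ step should again be exactly what is needed, but verifying this requires redoing the kind of tedious case-by-case estimate carried out in Lemma \ref{lem:L2} and Proposition \ref{prop:L2}, now for the second-order object $\bvep\cdotp\vep\cdotp\log(|s_i|^2+\ep^2)$ and for the commutator/cross terms; I would organize this as a short lemma listing the relevant pointwise bounds (analogous to \eqref{vder}) before assembling them into the integral estimate.
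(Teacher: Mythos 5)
Your proposal follows essentially the same route as the paper: differentiate the constant-volume identity $\frac{\d^2}{\d t\d\bar t}\int_{X_t}e^{\vp_{t,\ep}}\prod_i(|s_i|^2+\ep^2)^{\beta_i-1}\om^n=0$ via Proposition \ref{fint}, absorb the quadratic cross terms using the uniform bounds \eqref{vep} and \eqref{Linf}, and reduce everything to a coordinate estimate of $\bvep\cdotp\vep\cdotp\log(|s_i|^2+\ep^2)$. The only difference is that the paper's case-by-case computation shows this last quantity is in fact uniformly bounded pointwise (each of the three pieces in the decomposition of $\bvep\cdotp\vep$ contributes $O(1)$ because $\beta'_i\le 1/2$), which is stronger than the merely integrable bound $O\bigl((|z_i|^2+\ep^2)^{-\beta'_i}\bigr)$ you anticipate, and safer, since that exponent would be borderline against $\omte^n$ in the allowed case $\beta'_i=\beta_i$.
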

\begin{proof}
We start from the equation $\omte^n= \frac{e^{\vp_{t,\ep}}}{\prod (|s_i|^2+\ep^2)^{1-\beta_i}}\om^n$. Recall that the volume of $\omte$ is constant, hence $\frac{\d^2}{\d t\d \bar t} \int_{X_t}\frac{e^{\vp_{t,\ep}}}{\prod (|s_i|^2+\ep^2)^{1-\beta_i}}\om^n=0$.
By Proposition \ref{fint}, we get:
\begin{equation}
\label{div2}
\int_{X_t} (\vep \cdotp \bvep \cdotp \vp_{t,\ep}) \, \omte^n =   \sum_i (1-\beta_i) \int_{X_t} (\vep \cdotp \bvep \cdotp \log(|s_i|^2+\ep^2)) \, \omte^n
\end{equation}
We are reduced to studying the behavior of each function $\vep \cdotp \bvep \cdotp \log(|s_j|^2+\ep^2))$, and it suffices to show that on each coordinate chart, this function is uniformly integrable. Once again we are going to drop the $\ep$.  
Remember that $\bar v \cdotp v = \d_{\bar t t}+v_i \d_{\bar t i}+\bar v_k \d_{\bar k t}+\bar v_kv_i \d_{\bar k i}+(\d_{\bar t}v_i+\bar v_k \d_{\bar k}v_i) \d_i$. We decompose $\bar v \cdotp v \cdotp \log(|s_j|^2+\ep^2)$ into three terms that we will evaluate at $p_0$, the center of the coordinate chart. 

\begin{enumerate}
\item[$a.$] The term $(\d_{\bar t}v_i+\bar v_k \d_{\bar k}v_i) \d_i \log(|s_j|^2+\ep^2)$.

\noindent
At $p_0$, we have $$\d_i \log(|s_j|^2+\ep^2)= \begin{cases}
\frac{\bar z_i}{|z_i|^2+\ep^2} & \mbox{if } i=j \\
0 & \mbox{else}
\end{cases}$$
and $(\d_{\bar t}v_i+\bar v_k \d_{\bar k}v_i)=O(z_i)$ if $i\in \{1, \ldots, r\}$ so that the term we are estimating is a $O(1)$.\\

\item[$b.$] The term $(\d_{\bar t t}+v_i \d_{\bar t i}+\bar v_k \d_{\bar k t}) \log(|s_j|^2+\ep^2))$.

\noindent
We start with the formula, valid at $p_0$:
 $$\ddc \log (|s_j|^2+\ep^2)= \frac{\ep^2 idz_j \wedge d\bar z_{j}}{(|z_j|^2+\ep^2)^2}- \frac{|z_j|^2}{|z_j|^2+\ep^2} \, \theta_{D_i}$$
from which is it obvious that our term is bounded (in our coordinate system, $t=z_{n+1}$ and $j<n+1$.\\

\item[$c.$] The term $\bar v_kv_i \d_{\bar k i} \log(|s_j|^2+\ep^2))$.

\noindent
By the previous formula, all we have to care about is when $i=j=k$ in which case our term is a $O(\ep^2(|z_i|^2+\ep^2)^{-2\beta'_i})$ which is bounded as $\beta'_i \le 1/2$. \\
\end{enumerate}

\noindent
To summarize, we have proved that $\sum_i (1-\beta_i) (\vep \cdotp \bvep \cdotp \log(|s_i|^2+\ep^2))$ is uniformly bounded. Combining this with \eqref{div2}, we obtain expected the bound.
\end{proof}

\begin{coro}
\label{cor:L12}
There exists a uniform constant $C>0$ such that: 
$$\int_{X_t} |c(\re)| \,  \omvpe^n \le C.$$ 
\end{coro}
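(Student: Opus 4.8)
The plan is to write, on $X_t$, $|c(\re)| = c(\re) + 2\,c(\re)_-$ with $c(\re)_- := \max(-c(\re),0)$, and to bound $\int_{X_t} c(\re)\,\omte^n$ and $\int_{X_t} c(\re)_-\,\omte^n$ separately; throughout, integrals are taken against $\omte^n$, which equals $\omvpe^n$ on the fiber $X_t$. Both pieces are obtained by recycling the estimates of \S\S 2--4.

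\emph{Negative part.} Going back to \eqref{eq:c} and to the inequality derived right below it in the proof of Corollary~\ref{cor:positive}, one has on $X_t$ that $(-\Delta_{\omte}+\mathrm{Id})\,c(\re) \ge -g_\ep$, where
$$g_\ep := \sum_i \frac{C\ep^2}{|s_i|^2+\ep^2}\,|\vph|^2_{\om}\ \ge\ 0 .$$
Since $g_\ep \le NC\,|\vph|^2_{\om}$, Proposition~\ref{prop:L2} gives $\|g_\ep\|_{L^1(\omte^n)} \le C$ uniformly in $t,\ep$. The operator $L_\ep := -\Delta_{\omte}+\mathrm{Id}$ is positive (recalled before Proposition~\ref{minoration}), self-adjoint for $\omte^n$, and fixes the constants, so $L_\ep^{-1}$ is positivity-preserving with $L_\ep^{-1}1 = 1$; applying it to the inequality above yields $c(\re) \ge -L_\ep^{-1}g_\ep$, hence $c(\re)_- \le L_\ep^{-1}g_\ep$ pointwise, and therefore
$$\int_{X_t} c(\re)_-\,\omte^n\ \le\ \int_{X_t} L_\ep^{-1}g_\ep\,\omte^n\ =\ \int_{X_t} g_\ep\,\omte^n\ \le\ C .$$
Note that this step uses only positivity and self-adjointness of $L_\ep$, not the heat kernel upper bound \eqref{heat}.

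\emph{Remaining part.} For $\int_{X_t} c(\re)\,\omte^n$ the idea is to compare $c(\re)$ with the globally defined function $\bvep\cdot\vep\cdot\vp_{t,\ep}$, whose $\omte^n$-integral is uniformly bounded by Proposition~\ref{2nd}. In the usual coordinates $(z_1,\dots,z_n,t)$, expanding $c(\re) = (\re)_{t\bar t}-\sum(\omte)^{\bar\al\beta}(\re)_{t\bar\al}(\re)_{\beta\bar t}$ (with $(\re)_{t\bar t}=\al_{t\bar t}+\d^2_{t\bar t}\vp_{t,\ep}$, and similarly for $(\re)_{t\bar\al}$) and using $\bvep\cdot\vep\cdot\vp_{t,\ep} = \d^2_{t\bar t}\vp_{t,\ep} + (\bvep\cdot\vep-\d_{\bar t t})\vp_{t,\ep}$, the second-order term $\d^2_{t\bar t}\vp_{t,\ep}$ cancels in the difference $E_\ep := c(\re)-\bvep\cdot\vep\cdot\vp_{t,\ep}$, leaving a sum of terms of the type $|\bar\d_z\d_t\vp_{t,\ep}|^2_{\omte}$, $(\bvep\cdot\vep-\d_{\bar t t})\vp_{t,\ep}$, and smooth terms built from $\al$. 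Since $E_\ep$ is a genuine global function on $X_t$, one can estimate it chart by chart over a fixed finite cover: the $\al$-terms are bounded using $\omte \ge C^{-1}\om$ and $(\omte)^{\bar\al\al}\le C$ near $D$; $\int|\bar\d_z\d_t\vp_{t,\ep}|^2_{\omte}\,\omte^n \le C$ by \eqref{L2:est}; and $\|(\bvep\cdot\vep-\d_{\bar t t})\vp_{t,\ep}\|_{L^2}\le C$ by \eqref{interm}. Hence $\|E_\ep\|_{L^1(\omte^n)}\le C$, and combining with Proposition~\ref{2nd} gives $\big|\int_{X_t} c(\re)\,\omte^n\big|\le C$. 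Together with the previous step, $\int_{X_t}|c(\re)|\,\omte^n \le C$ uniformly in $t,\ep$.

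The hard part is this last step: making the cancellation of $\d^2_{t\bar t}\vp_{t,\ep}$ precise and bounding the residual terms of $E_\ep$ amounts to re-running, now in the conic setting, the coordinate bookkeeping behind the gradient estimate (Corollary~\ref{est:grad}), the $L^2$ Hessian bound \eqref{L2:est} and the estimate \eqref{interm}; the negative-part bound, by contrast, is comparatively soft.
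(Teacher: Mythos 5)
Your argument is correct, and its second half (the ``remaining part'') is exactly the paper's: the paper also compares $c(\re)$ with the global quantity $\bvep\cdotp\vep\cdotp\vp_{t,\ep}$, establishing $\|c(\re)-\bvep\cdotp\vep\cdotp\vp_{t,\ep}\|$ is uniformly controlled chart by chart via \eqref{L2:est} and \eqref{interm} (its equation \eqref{error}), and then invokes Proposition \ref{2nd}. Where you genuinely diverge is the negative part. The paper does not use the decomposition $|c(\re)|=c(\re)+2\,c(\re)_-$; instead it recycles the pointwise heat-kernel lower bound \eqref{infc} into a function $F_{\ep}(z)=C\int_{X_t}|\vph|^2\,d_{\omte}(z,w)^{2-2n}dV(w)$ with $c(\re)\ge -F_{\ep}$ and $\|F_{\ep}\|_{L^1}\le C$ (Fubini, uniform integrability of $d_{\omte}(\cdotp,w)^{2-2n}$, Proposition \ref{prop:L2}), and then bounds $\|c(\re)+F_{\ep}\|_{L^1}$ using that this function is nonnegative and its integral is controlled by Proposition \ref{2nd} and \eqref{error}. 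Your treatment of the negative part is softer: from $(-\Delta_{\omte}+\mathrm{Id})\,c(\re)\ge -g_{\ep}$ with $g_{\ep}\le NC|\vph|^2_{\om}$, the positivity of $L_{\ep}=-\Delta_{\omte}+\mathrm{Id}$ and the identity $\int_{X_t}L_{\ep}^{-1}g_{\ep}\,\omte^n=\int_{X_t}g_{\ep}\,\omte^n$ (integrate $L_{\ep}f=g_{\ep}$ over the closed fiber) give $\int c(\re)_-\le \int g_{\ep}\le C$ directly from Proposition \ref{prop:L2}. This bypasses Davies' upper bound on the heat kernel and Proposition \ref{minoration} entirely for this corollary (they remain indispensable for the pointwise statement \eqref{liminf2} in Corollary \ref{cor:positive}, but not for the $L^1$ bound), so your route is marginally more elementary; what the paper's $F_{\ep}$ buys is uniformity of method, since \eqref{infc} has already been established and is reused verbatim. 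One cosmetic point: the quadratic term $|\bar\d_z\d_t\vp_{t,\ep}|^2_{\omte}$ is only $L^1$-bounded by \eqref{L2:est}, so your $L^1$ formulation of the error $E_{\ep}$ is indeed the natural one.
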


\begin{proof}
Given the definition of $c(\rho_{\ep})$, if follows from on \eqref{L2:est} that on any trivializing chart $U_t$, one has $||c(\rho_{\ep}) - \d_{t\bar t}\, \vp_{t,\ep}||_{L^2(U_t)} \le C$. Combining this with \ref{interm}, one gets 
\begin{eqnarray}
\label{error}
||c(\rho_{\ep}) - \vep \cdotp \vep \cdotp \vp_{t,\ep}||_{L^2(X_t)} \le C
\end{eqnarray}
For $C$ big enough, the positive function $F_{\ep}(z):=C\int_{X_t}|\vph|^2d_{\omte}(z,w)^{2-2n}dV(w)$ satisfies 
\begin{enumerate}
\item[$\bullet$] $c(\rho_{\ep}) \ge -F_{\ep}$
\item[$\bullet$] $\int_{X_t} F_{\ep}dV \le C'$
\end{enumerate}
Here $dV$ is the volume form associated with $\omte$. The first item is a reformulation of \ref{infc} while the second follows from the uniform integrability of $d_{\omte}(\cdotp,w)^{2-2n}$ combined with Fubini theorem and Proposition \ref{prop:L2}.

One has: 
$$\int_{X_t}  \bar v_{\ep} \cdotp \vep \cdotp \vp_{t,\ep}\, dV= \int_{X_t}(\bar v_{\ep} \cdotp \vep \cdotp \vp_{t,\ep}-c(\re))dV+\int_{X_t}(c(\rho_{\ep}) +F_{\ep})dV-\int_{X_t}F_{\ep}\, dV$$
Proposition \ref{2nd} shows that $\left|\int_{X_t}  \bar v_{\ep} \cdotp v_{\ep} \cdotp \vp_{t,\ep}\right| \le C$. From \eqref{error} and the second bullet item, one infers that the integral $\int_{X_t}(c(\rho_{\ep})+F_{\ep})dV$ is uniformly bounded. Because $c(\re)+F_{\ep}\ge0$, this yields:
$$||c(\re)||_{L^1}\le ||c(\re)+F_{\ep}||_{L^1}+||F_{\ep}||_{L^1}\le C$$ 
This proves the corollary.
\end{proof}

\vspace{3mm}
\noindent
From there, one can easily deduce the boundedness of the current $\rho$ outside of the divisor $D$: 
\vspace{3mm}

\begin{coro}
\label{cor:reg}
The current $\rho$ is bounded outside the divisor $D$; in particular, its coefficients are locally bounded functions and its local potentials belong to $\mathscr C^{1,\alpha}_{\rm loc}(X_0\ssm D)$ for any $\alpha<1$. Furthermore the coefficients of $\rho$ are smooth along the fiber directions outside $D$. 
\end{coro}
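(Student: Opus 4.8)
The plan is to bootstrap from the two estimates already in hand — the uniform $L^1$ bound on $c(\re)$ (Corollary~\ref{cor:L12}) together with the fiberwise elliptic equation \eqref{eq:c} it satisfies, and the interior higher-order estimates on $\d_t\vp_{t,\ep}$ away from $D$ (inequality \eqref{Linf2}) — to obtain a uniform $L^\infty_{\rm loc}(X_0\ssm D)$ bound on $c(\re)$, hence on $\d^2_{t\bar t}\vp_{t,\ep}$, and then pass to the limit. First I would fix $K\Subset X_0\ssm D$ and a slightly larger $K'$ with $K\Subset K'\Subset X_0\ssm D$. On $K'$ the approximate conic metrics $\omte$ are uniformly equivalent to a fixed smooth Kähler metric with uniform $\mathscr C^k$ bounds (Proposition~\ref{prop:est} and the transversality of $D$), and by Proposition~\ref{prop:L2} together with the pointwise lower bound \eqref{infc}, the negative part of $c(\re)$ is controlled on $K'$: $c(\re)\ge -C(K')$ uniformly. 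Combined with the uniform $L^1$ bound from Corollary~\ref{cor:L12}, $c(\re)+C(K')$ is a nonnegative function with uniformly bounded $L^1$ norm satisfying $(-\Delta_{\omvpe}+\mathrm{id})(c(\re)+C(K'))\le (-\Delta_{\omvpe}+\mathrm{id})(c(\re))+C(K') = |\bar\d\vph|^2+\sum_i(1-\beta_i)w_{i,\ep}(\vph,\vph)+|\vph|^2_\gamma+C(K')$, whose right-hand side (using $w_{i,\ep}\ge \frac{\ep^2}{|s_i|^2+\ep^2}\tt \ge -C\om$ pointwise and the gradient/Hessian estimates on $\vp$ in the fiber directions, valid on $K'$ since we are away from $D$) has a uniform $L^\infty(K')$ bound. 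Then the local elliptic estimate (e.g. \cite[Theorem~8.17]{Gilb}, exactly as used in \S\ref{Linfty}) applied on $K'$ turns the $L^1$ bound into $\sup_K(c(\re)+C(K'))\le C''(K)$, i.e. a uniform two-sided bound $|c(\re)|\le C(K)$ on $K$, uniformly in $\ep,t$.

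Next, since $c(\re)=\d^2_{t\bar t}\vp_{t,\ep}-|\bar\d_z\d_t\vp_{t,\ep}|^2_{\omvpe}$ in local coordinates, and \eqref{Linf2} already gives uniform $\mathscr C^k(U)$ bounds on $\d_t\vp_{t,\ep}$ for each $U\Subset X_t\ssm D_t$ (in particular on the mixed term $|\bar\d_z\d_t\vp_{t,\ep}|^2$), the uniform bound on $c(\re)$ upgrades to a uniform $L^\infty_{\rm loc}(X_0\ssm D)$ bound on $\d^2_{t\bar t}\vp_{t,\ep}$. At this point all second derivatives of $\vp_{t,\ep}$ — fiber directions from Proposition~\ref{prop:est}, mixed and purely transverse from the above — are uniformly bounded on compact subsets of $X_0\ssm D$, so $\rho_\ep=\Tht+\sum_i(1-\beta_i)\tt+\gamma+\ddc\vp_\ep$ has uniformly bounded coefficients on each $\Omega\Subset X_0\ssm D$. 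Feeding this back into the Monge–Ampère equation \eqref{eq:eps0} and its $t$-derivative, a standard Evans–Krylov / Schauder bootstrap (again purely interior, away from $D$, where everything is smooth and nondegenerate) yields uniform $\mathscr C^k_{\rm loc}(X_0\ssm D)$ bounds on $\vp_\ep$ for every $k$. Passing to the limit $\ep\to 0$ along a subsequence and using that $\rho$ is the weak limit of $\rho_\ep$ (established in the proof of Corollary~\ref{cor:positive}), we conclude that $\rho$ is smooth on $X_0\ssm D$ with, for any Kähler form $\om$ on $X$ and any $\Omega\Subset X_0\ssm D$, a bound $0\le\rho\le C(\Omega)\om$ on $\Omega$; this is the $L^\infty$ boundedness of the coefficients, and the $\mathscr C^{1,\alpha}_{\rm loc}$ regularity of the local potentials for all $\alpha<1$ follows since $\ddc\vp$ has locally bounded coefficients (by the Calderón–Zygmund / local elliptic regularity for the Laplacian, $\vp\in W^{2,p}_{\rm loc}$ for all $p$, hence $\vp\in\mathscr C^{1,\alpha}_{\rm loc}$).

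Finally, the claim that the coefficients of $\rho$ are smooth \emph{along the fiber directions} outside $D$ is weaker and already essentially contained in the inputs: the fiberwise conic Kähler–Einstein metrics $\om_y$ are smooth on $X_y\ssm D_y$ by \cite{GP}, so the restriction $\rho_{|X_y}=\om_y$ is smooth there; alternatively one invokes the uniform fiberwise $\mathscr C^k$ estimates from Proposition~\ref{prop:est} (away from $D$) together with the limit $\vp_{t,\ep}\to\vp_t$ in $\mathscr C^k_{\rm loc}(X_t\ssm D_t)$.

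The main obstacle is the first step: converting the $L^1$ bound on $c(\re)$ into a local $L^\infty$ bound. This hinges on showing the right-hand side of the equation for $c(\re)$ is uniformly bounded \emph{on compact subsets of $X_0\ssm D$} — which is fine there because the approximation terms $w_{i,\ep}$ are smooth and bounded away from $D$ and the fiber-direction estimates on $\vp$ are available — and on the interior elliptic estimate being applicable with constants uniform in $\ep,t$, which it is thanks to the uniform equivalence of $\omte$ to a fixed smooth metric on $K'$ and the uniform Sobolev/Poincaré constants of Proposition~\ref{prop:est}. Near $D$ the argument genuinely breaks (the metrics degenerate and the error terms blow up), but that is exactly why the statement only asserts boundedness and smoothness \emph{outside} $D$, consistently with the lower bound \eqref{infc} deteriorating as one approaches the divisor.
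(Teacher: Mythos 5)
Your overall strategy coincides with the paper's: use the lower bound for $c(\re)$ on compact subsets of $X_t\ssm D_t$ coming from \eqref{infc}--\eqref{liminf2}, the $L^1$ bound of Corollary \ref{cor:L12}, and the elliptic equation \eqref{eq:c}, whose right-hand side is uniformly bounded away from $D$, to get a uniform local sup bound on $c(\re)$ and hence on $\d^2_{t\bar t}\vp_{t,\ep}$; then pass to the weak limit $\re\to\rho$ and use the bounded Laplacian to get $\mathscr C^{1,\alpha}$ potentials. Two points, however, need repair. First, the step where you say that the local elliptic estimate (\cite[Theorem 8.17]{Gilb}, ``exactly as used in \S\ref{Linfty}'') turns the $L^1$ bound into a sup bound is not what that theorem provides: it requires an $L^p$ bound with $p>1$ (in \S\ref{Linfty} an $L^2$ bound was available; here you only have $L^1$). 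The paper closes exactly this gap by combining Theorem 8.17 with the weak Harnack inequality \cite[Theorem 8.18]{Gilb}: after shifting $c(\re)$ to make it nonnegative, one bounds $\|c(\re)\|_{L^{1+\delta}(B(2R))}$ by $\inf_{B(R)}c(\re)+1$, and the infimum is controlled by the $L^1$ norm. Alternatively one may invoke a local maximum principle for subsolutions valid for $L^p$ with $p\le 1$, but some such argument must be supplied; as cited, your step does not go through.

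Second, the claim that a ``standard Evans--Krylov / Schauder bootstrap'' applied to \eqref{eq:eps0} and its $t$-derivative yields uniform $\mathscr C^k_{\rm loc}(X_0\ssm D)$ bounds on $\vp_\ep$ for every $k$, and hence that $\rho$ is smooth on $X_0\ssm D$, is unjustified: the Monge--Amp\`ere equation is elliptic only along the fibers, so no interior bootstrap on the total space controls higher transverse derivatives; the paper explicitly notes in its final remarks that only $\d_t\vp$ and $\d_{t\bar t}\vp$ are controlled and that higher transverse regularity requires significant additional work. Fortunately the corollary does not need this overclaim: boundedness and the $\mathscr C^{1,\alpha}$ statement already follow from your earlier steps, and the smoothness of the coefficients along the fiber directions should be obtained, as in the paper, by applying fiberwise interior Schauder estimates to the equation \eqref{eq:c} for $c(\re)$, whose right-hand side has uniform $\mathscr C^k$ bounds on $U\Subset X_t\ssm D_t$ thanks to \eqref{Linf2}. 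Your alternative justification via the smoothness of the fiberwise K\"ahler--Einstein metric $\om_y$ only covers the fiber--fiber block of $\rho$, not the mixed and $t\bar t$ coefficients, so it does not by itself give the last assertion of the corollary.
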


\begin{proof}
Let $U\subset X_t$ be a coordinate chart which does not intersect $D$. We aim to show that on this set, $\d_{t \bar t} \vp_{t,\ep}$ admits uniform $\mathscr C^{k}$ estimates for all $k\ge 0$ independent of $\ep$ and $t$ (small enough). The case $k=0$ is crucial, as one shall see.\\

Let us first reduce the $\mathscr C^k$ estimate to the $\mathscr C^0$ one. Note that because of the estimates \eqref{Linf2} on $\d_t \vp_{t,\ep}$ and the expression of $c(\rho_{\ep})$ in coordinates recalled in \S \ref{diff}, it is enough to prove $\mathscr C^k$ estimates for $c(\rho_{\ep})$, which is solution of 
$$(\Delta-\mathrm{id})c(\rho_{\ep}) = -|\bar \d \vph |^2 - \sum_i(1-\beta_i) \left[\tt+\ddc \log(|s_i|^2+\ep^2)\right](\vph,\vph)-|\vph|^2_{\gamma}$$
Thanks to \eqref{Linf2}, $\vph$ admits uniform $\mathscr C ^k$ estimates on $U$; and of course, so does $\omvpe$. So by Schauder estimates, a $\mathscr C^0$ estimate on $c(\rho_{\ep})$ will yield  $\mathscr C ^k$ estimates on $U$ for $c(\re)$ and thus $\d_{t\bar t}\vp$.\\

For the $\mathscr C^0$ estimate, the important informations available to us are the $L^1$ bound for $c(\rho_{\ep})$ established in Corollary \ref{cor:L12} and the local lower bound for $c(\re)$ on the compact sets of $X \ssm D$, cf \eqref{liminf2}. 

Now, let us choose a base point $x\in U$, fix $R>0$ such that $B(4R)\subset U$, and choose a positive number $\delta<\frac 2 {n-2}$. From Harnack inequality \cite[Theorem 8.17]{Gilb}, one gets
$$\sup_{B(R)} c(\rho_{\ep}) \le C(||c(\rho_{\ep})||_{L^{1+\delta}(B(2R))}+1)$$
but \cite[Theorem 8.18]{Gilb} yields:
$$||c(\rho_{\ep})||_{L^{1+\delta}(B(2R))} \le C(\inf_{B(R)} c(\rho_{\ep})+1)$$
Actually Harnack inequality applies to non-negative functions; however, one can add to $c(\re)$ a fixed constant to make it positive (cf \eqref{liminf2}) and this operation leaves the estimates essentially unchanged.
Therefore, if $\limsup_{\ep \to 0} \sup_{B(R)} c(\rho_{\ep}) = +\infty$, then $\limsup_{\ep \to 0} \inf_{B(R)} c(\rho_{\ep}) = +\infty$, which contradicts the $L^1$ control we have on $c(\rho_{\ep})$. Therefore $c(\rho_{\ep})$ is uniformly bounded above, but we already knew the bound from below, cf \eqref{liminf2}.  Hence the $\mathscr C^0$ estimate. \\

As the estimate is uniform in $t$ small enough, one gets that for any point $x\in X_0\ssm D$, there exist a neighborhood $\Omega\subset X_0\ssm D $ of $x$ and a constant $C>0$ such that on $\Omega$, one has
$$\re \le C \om$$
for some background Kähler form $\om$ on $X$. As $\rho$ is the weak limit of $\re$ when $\ep \to 0$ (cf the few lines below \eqref{liminf2}), one gets the boundedness of $\rho$ outside $D$. Therefore the coefficients of $\rho$ outside $D$ belong to the dual of $L^{1}$, hence they are locally bounded \textit{functions}. 

Finally, the claim on $\mathscr C^{1,\alpha}$ regularity follows easily from the boundedness of $\d_{t\bar t}\vp$. Indeed, on a coordinate chart of $X_0\ssm D$, the Laplacian of $\vp_{t,\ep}$ is bounded by the $\mathscr C^0$ estimate above combined with Proposition \ref{prop:est}. By standard results (e.g. \cite[Theorem 3.9]{Gilb}) this yields $\mathscr C^{1,\alpha}$ bounds on $\vp_{t,\ep}$ for any $\alpha<1$ and from there, Arzela-Ascoli theorem shows the expected regularity of $\vp$.
\end{proof}

\section{Extension}
\label{extension}
At this point, one knows that the fiberwise twisted conic Kähler-Einstein metrics induce a closed positive $(1,1)$-current  $\rho \in c_1(K_{X/Y}+\sum(1-\beta_k)D_k)_{|X_0}+\{\gamma\}_{|X_0}$ on $X_0$; we would like to extend that current to a positive current on $X$. To achieve this goal, it is sufficient to prove that the local potential of $\rho$
are is bounded from above near the singular fibers. To do so, one follows the strategy in \cite[\S 3.3]{Paun12} which can be carried out to this more general setting without significant change.

We pick a point $x_0$ in $X_0=\pi^{-1}(x_0)$, and choose a Stein neighborhood $\Omega$ of $x_0$ in $X$; we write $\Omega_y = \Omega \cap X_y$, choose
a potential $\tau_y$ of $\rho_y$ so that (up to adding a pluriharmonic function to $\tau_y$) the equation satisfied by $\tau_y$ on $\Omega_y$ is
$$(\ddc \tau_y)^n = \frac{1}{\prod |f_k|^{2(1-\beta_k)}}e^{\tau_{y}-F}\left| \frac {dz}{dt} \right|^2$$
if $(f_k=0)$ is an equation of $D_k$, $F$ is a local potential for $\gamma$ (on $\Omega$), and the coordinates $(z_1, \ldots, z_n, t_1, \ldots, t_m)$ are chosen so that $p(\underline z, \underline t)=\underline t$.
We set
$$H_{m,y}:=\left\{f\in \mathcal O(\Omega_y); \int_{\Omega_y}|f|^2e^{-m\tau_y}(\ddc \tau_y)^n \le 1\right\}$$
Then $$\tau(y)(x_0)= \lim_{m\to \infty}\sup_{f\in H_{m,y}} \frac 1 m \log |f(x_0)|$$
But for $f\in H_{m,y}$, Hölder's inequality yields 
\begin{equation}
\label{maj}
\int_{\Omega_y}|f|^{2/m}e^{-\tau_y}(\ddc \tau_y)^n \le \left(\{\rho_{|X_y}\}^{n}\right)^{\frac{m}{m-1}}
\end{equation}
and the right hand side is bounded above independently of $y$ and $m$. Furthermore, the $L^{2/m}$ version of Ohsawa-Takegoshi extension theorem \cite{BP2} yields a holomorphic function $F$ on $\Omega$ that extends $f$ and such that
$$|F(x_0)|^{2/m} \le C_{\Omega} \int_{\Omega} |F|^{2/m} |dz|^2 \le C \int_{\Omega_y}|f|^{2/m}\left| \frac {dz}{dt} \right|^2\le C' \int_{\Omega_y}|f|^{2/m} e^{-\tau_y+F}(\ddc \vp_y)^n$$
as $\prod |f_k|^{2(1-\beta_k)}$ is uniformly bounded above. Moreover, the integral on the right hand side is bounded above uniformly in $y$ and $m$ by  \eqref{maj} and the smoothness of $F$. 
Therefore $\tau_y \le C$ on $\Omega_y$ for a constant $C$ independent of $y \in p(\Omega\cap X_0)$.

\section{Some remarks}

\subsection{The cuspidal case}
First, one can generalize the final conclusion of the main Theorem in the case where the boundary divisor $D=\sum (1-\beta_k) D_k$ has coefficients $\beta_j \in [0,1)$. Indeed, if $c_1(K_{X_y}+\sum_{k=1}^r (1-\beta_k){D_k}_{|{X_y}})+ \{\gamma\}_{|X_y}$ is Kähler for a generic $y\in Y$, then so is $ c_1(K_{X_y}+(1-\ep)\sum_{k=1}^r (1-\beta_k){D_k}_{|{X_y}})+ \{\gamma\}_{|X_y}$ for $\ep>0$ small enough.  Therefore, $ c_1(K_{X/Y}+ \sum_{k=1}^r(1-\beta_r) D_k)+\{\gamma\}$ is pseudoeffective as limit of pseudoeffective classes. The question of the psh variation of the associated conic/cuspidal Kähler-Einstein metric seems to be more involved though. In the case where the divisor is reduced, it seems very plausible that one could apply the implicit function theorem in the Cheng-Yau Hölder spaces to get the "smoothness" of the fiberwise Kähler-Einstein metric, and from there the positivity of its variation. 

%
%

\subsection{More regularity?}
One may ask whether one can obtain a better regularity for the global potential $\vp$ of $\rho$ on the locus $X_0$. Without loss of generality, one can assume that $p:X \to \mathbb D$ is smooth.There are two types of improvement one could be looking for: \\

$\bullet$ Away from $D$: we only obtained a control of the first two mixed derivatives $\d_t \vp$ and $\d_{t \bar t} \vp$, which we showed to be smooth \textit{along the fiber directions}. Understanding the regularity of higher transverse derivatives of $\vp$ seems to require some significant additional work. \\

$\bullet$ Near $D$:  we have proved that $\vp, \d_t \vp$ are bounded. Given the simple elliptic differential equation satisfied locally by $\d_t \vp$ on each fiber: $(\Delta_{\om_t}- \mathrm{Id})(\d_t \vp)=(\tr_{\alpha_t}-\tr_{\om_t})(\d_t \om)+\d_t \log |s|^2$ 
then one can use the conic Harnack inequalities \cite[Theorem 7.11]{GP} combined with Remark 7.13 in the same paper to get that $\d_t \vp \in  \mathscr C^{\alpha, \beta}(X_t)$; from there, and provided the conic Schauder estimates obtained in \cite{Don} extend to the snc case, the function $\d_t \vp$ would belong to the space $ \mathscr C^{2,\alpha, \beta}(X_t)$. 

The main problem though is about $\d_{t \bar t} \vp$. If one can show that this quantity is uniformly bounded outside $D$ (or equivalently that $c(\rho)$ is bounded on $X_t$), then it would mean that $\rho$ is dominated by a global conic metric on $X$. Let us make this a bit more explicit. Near a point $x_0\in X_0$, one can choose a neigborhood $\Omega_0\subset X_0$ with local holomorphic coordinates $(z_1, \ldots, z_n, t_1, \ldots, t_m)$ on $X$ such that $p(\underline z,  t) =  t$, and such that up to relabelling the cone angles, $\sum (1-\beta_k) D_k$ is given by $(1-\beta_1) [z_1=0]+ \cdots + (1-\beta_s) [z_s=0]$. Then provided $c(\rho)$ is bounded, there would exist a constant $C$ (depending on $\Omega_0$) such that 
$$0 \le \rho \le C\left(\sum_{k=1}^s \frac {idz_k\wedge d\bar z_{k}}{|z_k|^{2(1-\beta_k)}}+\sum_{k=s+1}^n i dz_k \wedge d \bar z_k+  i dt \wedge d\bar t\right)$$  
holds uniformly on $\Omega_0$. 

From there, it would be easy to conclude that $\d_{t\bar t} \vp \in \mathscr C^{2,\alpha, \beta}(X_t)$, so that the coefficients of $\rho$ would have the (refined) regularity of a conic metric on $X_t$. Essentially, one can differentiate the equation satisfied by $\d_{t\bar t}\vp$ again, and because $\d_t \vp$ is $\mathscr C^{2,\alpha, \beta}$ and $\d_{t\bar t}\vp$ is bounded, Harnack's inequality \cite{GP} would show that $\d_{t\bar t} \vp$ is $\mathscr{C}^{\alpha, \beta}$ and from there, Schauder estimates \cite{Don} would yield the claim.  

So essentially all the remaining regularity issues on the coefficients of $\rho$ in the fiber directions can be brought down to the boundedness of $c(\rho)$ across $D$ near the regular fibers, which by standard arguments reduces to showing a $L^{p}$ estimate on $c(\re)$ for some $p>1$ (whereas we are only able to give such an estimate for $p=1$). 

\bibliographystyle{smfalpha}
\bibliography{biblio}

\end{document}